\numberwithin{equation}{section}
\numberwithin{figure}{section}
\theoremstyle{plain}
\newtheorem{thm}{\protect\theoremname}[section]
  \theoremstyle{definition}
  \newtheorem{defn}[thm]{\protect\definitionname}
  \theoremstyle{plain}
  \newtheorem{cor}[thm]{\protect\corollaryname}
  \theoremstyle{plain}
  \newtheorem{prop}[thm]{\protect\propositionname}
  \theoremstyle{plain}
  \newtheorem{lem}[thm]{\protect\lemmaname}
  \theoremstyle{plain}
  \newtheorem{fact}[thm]{\protect\factname}
  \theoremstyle{remark}
  \newtheorem*{rem*}{\protect\remarkname}
  \theoremstyle{remark}
  \newtheorem{rem}[thm]{\protect\remarkname}
\newcommand{\e}{\mathrm{e}}
\newcommand{\1}{1}
\newcommand{\N}{\mathbb{N}}
\newcommand{\R}{\mathbb{R}}
\renewcommand{\Pi}{\pi}
\renewcommand{\emptyset}{\varnothing}
\newcommand\diam{\mathrm{diam}}
\DeclareMathOperator*{\Int}{Int}
\newcommand\T{\mathrm{\mathcal{T}}}
\DeclareMathOperator*{\Span}{span}
\DeclareMathOperator*{\card}{card}
\DeclareMathOperator*{\Lker}{J_{ker}}
\renewcommand{\vec}[1]{\mathbf{#1}}
\DeclareMathOperator*{\Fix}{Fix}
\DeclareMathOperator*{\Lip}{Lip}
\DeclareMathOperator*{\Hol}{\textnormal{H\"ol}}
  \providecommand{\corollaryname}{Corollary}
  \providecommand{\definitionname}{Definition}
  \providecommand{\factname}{Fact}
  \providecommand{\lemmaname}{Lemma}
  \providecommand{\propositionname}{Proposition}
  \providecommand{\remarkname}{Remark}
\providecommand{\theoremname}{Theorem}
\begin{document}

\title[Spectral gap property for random dynamics on the real line]{Spectral gap property for random dynamics on the real line  and
multifractal analysis of generalised Takagi functions }
\begin{abstract}
We consider the random iteration of finitely many expanding $\mathcal{C}^{1+\epsilon}$
diffeomorphisms on the real line without a common fixed point. We
derive the spectral gap property of the associated transition operator acting
on spaces of H\"older continuous functions. As an application we introduce 
generalised Takagi
functions on the real line and we perform a complete multifractal analysis of the pointwise H\"older exponents of these functions.
\end{abstract}

\keywords{Random dynamical systems, iterated function systems, fractal geometry, fractal functions, multifractal analysis, thermodynamic formalism.}

\author{Johannes Jaerisch}

\address{Graduate School of Mathematics, Nagoya University, Furocho, Chikusaku, 
Nagoya, 464-8602 Japan}

\email{jaerisch@math.nagoya-u.ac.jp}

\urladdr{ 	
http://www.math.nagoya-u.ac.jp/\textasciitilde{}jaerisch/index.html}

\author{Hiroki Sumi}

\address{Course of Mathematical Science, Department of Human Coexistence,
Graduate School of Human and Environmental Studies, Kyoto University
Yoshida-nihonmatsu-cho, Sakyo-ku, Kyoto 606-8501, Japan }

\email{sumi@math.h.kyoto-u.ac.jp}

\urladdr{http://www.math.h.kyoto-u.ac.jp/\textasciitilde{}sumi/index.html}

\date{\today, \ {\em MSC2010}: 37H10, 37D35 (primary), 28A80 (secondary). To appear 
	in Comm. Math. Phys. }

\maketitle

\section{Introduction and statement of results}
In this paper, we investigate the independent and identically-distributed (i.i.d.) random dynamical systems on the real line. The theory of dynamical systems is used to describe various subjects in basically all areas of natural and social sciences. Since nature and any other environment have a lot of random terms, it is very natural and important not only to consider the dynamics of iteration of one map, but also to consider random dynamics. Many researchers in various fields have found and investigated many kinds of new phenomena in random dynamics which cannot hold in deterministic dynamics. These phenomena arise from the effect of randomness or noise and they are called {\emph{ randomness-induced phenomena}} or {\emph{ noise-induced phenomena}} (\cite{JS13b, JS2016, s11random, S13Coop}). Under certain conditions, because of 
the effect of randomness or noise, the chaoticity of the system becomes milder, but the system  still has some complexity. Hence regarding such random dynamical systems, our aim is to investigate the
{\emph{ gradation between  chaos and order}}.  

To find and to study quantities describing the gradation between chaos and order, we combine ideas  of random dynamical systems, ergodic theory (in particular, thermodynamic formalism), iterated function systems,  and fractal geometry.  More precisely, for any   random dynamical system in our setting,  there exists 
 an exponent $\alpha _{-}\in (0,1)$ such that for each 
 $\alpha $ with $0<\alpha <\alpha _{-}$, 
 the transition operator of the system behaves 
 well (e.g.,  it has a spectral gap property) on 
 the space ${\mathcal C}^{\alpha }$ of $\alpha$-H\"older continuous functions 
 endowed with $\alpha$-H\"older norm, 
 but for each $\alpha $ with $\alpha _{-}<\alpha <1$, 
 the transition operator of the system does not behave 
 well (Theorem~\ref{thm:specgapintro}, Corollary~\ref{cor:transitionintro}). This quantity $\alpha _{-}$ 
 describes the  gradation between chaos and order  
for the system.  Furthermore, to provide a refined  gradation, we investigate the pointwise 
H\"older exponents of the limit state functions (i.e., fixed points of the transition operator) and 
their (higher order) partial derivatives with respect 
to the probability parameters. It turns out that the pointwise 
H\"older exponents have a complicated fine structure which can be suitably investigated using 
the multifractal analysis and the concept of fractal dimension  (Theorems~\ref{thm:dimspecintro}, \ref{intro thm:global hoelder}).   The objects appearing in the multifractal analysis also describe the gradation between  chaos and order for the system. Moreover, we present a new general framework to study a large class of fractal functions. In particular, we shed new light on the regularity properties of the classical Takagi function in our framework (see Theorem \ref{intro thm:global hoelder}  and Proposition \ref{intro prop:non-differentiable}).

Throughout, let $I:=\left\{ 1,\dots,s+1\right\} $, $s\ge1$, and
let $f_{i}:\R\rightarrow\R$, $i\in I$, be a family of $\mathcal{C}^{1+\epsilon}$
diffeomorphisms with $\epsilon$-H\"older continuous derivatives
for some  $\epsilon>0$. We say that $(f_{i})_{i\in I}$ is \emph{ expanding}
if there exists $\lambda>1$ such that $f_{i}'(x)\ge\lambda>1$, for
all $x\in\R$ and $i\in I$. The family $(f_{i})_{i\in I}$ \emph{has
no common fixed point} if there exists no $x\in\R$ such that $f_{i}(x)=x$
for all $i\in I$. 

We denote by $\overline{\R}:=\R\cup\left\{ \pm\infty\right\} $ the
two-point compactification of $\R$ endowed with a metric $d$ on
$\overline{\R}$ which is strongly equivalent to the Euclidean metric
on compact subsets of $\R$, that is, for each compact set $K\subset\R$
there exists a constant $C>0$ such that $C^{-1}|x-y|\le d(x,y)\le C|x-y|$,
for all $x,y\in K$. For $i\in I$ we extend the definition of $f_{i}$
from $\R$ to $\overline{\R}$ by setting $f_{i}(\pm\infty):=\pm\infty.$
We say that $(f_{i})_{i\in I}$ is \emph{contracting near infinity}
if there exist neighborhoods $V^{\pm}$ of $\pm\infty$ such that
$\Lip(f_{i|V^{\pm}})<1$, $i\in I$. Here, for $D\subset\overline{\R}$
and $g:D\rightarrow\R$, we have set $\Lip(g):=\sup_{x,y\in D,x\neq y}d\left(g(x),g(y)\right)\big/d(x,y)$.
Note that if $(f_{i})_{i\in I}$ is contracting near infinity then
$\Lip(f_{i})<\infty$ for each $i\in I$. We refer to Section 
\ref{sec:Contractions-near-infinity} for details about the property
of contraction near infinity. 

Throughout, we assume that $(f_{i})_{i\in I}$ is  expanding, has
no common fixed point, and is contracting near infinity. For $\vec{p}=(p_{1},\dots,p_{s})\in\left(0,1\right)^{s}$
with $\sum_{i=1}^{s}p_{i}<1$, let $p_{s+1}:=1-\sum_{i=1}^{s}p_{i}$.
Let $\mathcal{C}(\overline{\R})$ denote the Banach space of continuous 
functions endowed
with the supremum norm $\Vert\cdot\Vert_{\infty}$. Define the transition
operator 
\[
M_{\vec{p}}:\mathcal{C}\left(\overline{\R}\right)\rightarrow\mathcal{C}\left(\overline{\R}\right),\quad M_{\vec{p}}h=\sum_{i\in I}p_{i}\cdot h\circ f_{i},\,\,h\in\mathcal{C}\left(\overline{\R}\right).
\]
For $\alpha>0$ let $\mathcal{C}^{\alpha}(\overline{\R})$ denote
the Banach space of $\alpha$-H\"older continuous functions (see
Section \ref{sec:Spectral-gap-property}). Note that $M_{\vec{p}}\left(\mathcal{C}^{\alpha}(\overline{\R})\right)\subset\mathcal{C}^{\alpha}(\overline{\R})$. To state
our first main result we say that $M_{\vec{p}}$ has the \emph{spectral
gap property} if its spectrum consists of finitely many eigenvalues
of modulus one, and the rest of the spectrum is contained in a ball
of radius strictly less than one. We say that $(f_{i})_{i\in I}$
satisfies the \emph{separating condition}  if there exists a non-empty
bounded open interval $O\subset\R$ such that $f_{i}^{-1}(O)\subset O$,
for all $i\in I$, and for all $i,j\in I$ with $i\neq j$, we have
$f_{i}^{-1}(\overline{O})\cap f_{j}^{-1}(\overline{O})=\emptyset.$
 For the definition of the bottom of the spectrum $\alpha_{-}=\alpha_{-}(\vec{p})$
we refer to Section \ref{subsec:Improved-spectral-gap}. For $\vec{a}\in\R^{s}$
and $\delta>0$ we denote by  $B(\vec{a},\delta)\subset\R^{s}$ 
the open ball of radius $\delta$ with center $\vec{a}$ in $\R^{s}$. 
\begin{thm}[Theorem \ref{thm:spectralgap} and Theorem \ref{thm: spectral gap separating condition}]
\label{thm:specgapintro}
For every $\vec{p}_{0}\in(0,1)^{s}$ there exist $\delta>0$ and
$\alpha>0$ such that $M_{\vec{p}}:\mathcal{C}^{\alpha}(\overline{\R})\rightarrow\mathcal{C}^{\alpha}(\overline{\R})$
has the spectral gap property for every $\vec{p}\in B(\vec{p}_{0},\delta)$.
If $(f_{i})_{i\in I}$ satisfies the separating condition, then the
previous assertion holds for any $\alpha<\alpha_{-}(\vec{p}_{0})$. 
\end{thm}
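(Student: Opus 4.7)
My plan is to verify the hypotheses of the Ionescu-Tulcea--Marinescu / Hennion quasi-compactness criterion for $M_{\vec{p}}$ acting on $\mathcal{C}^\alpha(\overline{\R})$. The compactness of the inclusion $\mathcal{C}^\alpha(\overline{\R}) \hookrightarrow \mathcal{C}(\overline{\R})$ is immediate from Arzel\`a--Ascoli together with compactness of $\overline{\R}$, while $M_{\vec{p}}$ is a bounded Markov operator on $\mathcal{C}^\alpha$ (boundedness coming from $\Lip(f_i)<\infty$, which is guaranteed by contraction near infinity). The remaining ingredient is a Doeblin--Fortet / Lasota--Yorke inequality of the form
\[
\|M_{\vec{p}}^{n_0} h\|_\alpha \leq r\,\|h\|_\alpha + C\,\|h\|_\infty, \qquad r<1,
\]
holding uniformly for $\vec{p}$ in a neighbourhood of $\vec{p}_0$, for some integer $n_0\geq 1$ and suitably small $\alpha>0$. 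Once this is in place, quasi-compactness of $M_{\vec{p}}$ follows: the peripheral spectrum is finite and consists of eigenvalues of finite multiplicity, and the Markov property ($M_{\vec{p}}\1=\1$, $\|M_{\vec{p}}\|_\infty\leq 1$) together with the quasi-compact decomposition confines the non-peripheral spectrum to a strictly smaller disc, which is precisely the spectral gap. The neighbourhood $B(\vec{p}_0,\delta)$ is then obtained from continuity of $\vec{p}\mapsto M_{\vec{p}}$ in the operator norm on $\mathcal{C}^\alpha$ together with the standard perturbation stability of quasi-compactness.

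The core of the argument is therefore the Doeblin--Fortet inequality. Starting from $M_{\vec{p}}^n h=\sum_{|\omega|=n}p_\omega\,h\circ f_\omega$, I would use the elementary bound $|h(u)-h(v)|\leq \min\{[h]_\alpha d(u,v)^\alpha,\,2\|h\|_\infty\}$ and split the sum over words $\omega$ according to whether the two coupled trajectories $\{f_{\omega|_k}(x)\}_k$ and $\{f_{\omega|_k}(y)\}_k$ both eventually enter the same neighbourhood $V^\pm$ (``good'' words) or not (``bad'' words). For good words, the contraction near infinity hypothesis delivers geometric decay of $d(f_\omega(x),f_\omega(y))$, producing a contribution $r[h]_\alpha$ with $r<1$ after a suitable choice of $\alpha$ and $n_0$; the expanding property together with no common fixed point guarantees that the ``bulk phase'' before absorption into $V^\pm$ has finite expected length, so the good event has probability bounded away from zero. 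For bad words the $d$-distance is trivially bounded by $\diam(\overline{\R})$, and this contribution is absorbed into $C\|h\|_\infty$ provided the probability of the bad event can be bounded by a positive power of $d(x,y)$.

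For the second assertion, under the separating condition the interval $O$ furnishes a symbolic coding of the random dynamics via the contracting inverse branches $f_i^{-1}|_{\overline{O}}$, which form an IFS satisfying the open set condition; its attractor $J:=\bigcap_{n\geq 0}\bigcup_{|\omega|=n}f_\omega^{-1}(\overline{O})$ is conjugate to the one-sided full shift on $I^{\N}$. In this setting $M_{\vec{p}}$ becomes (the dual of) a Ruelle-type transfer operator for an expanding Markov dynamics, and classical thermodynamic formalism identifies $\alpha_-(\vec{p}_0)$ as the sharp threshold of the pressure-theoretic contraction condition that underlies the Doeblin--Fortet inequality; for any $\alpha<\alpha_-(\vec{p}_0)$ the argument above then goes through. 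The main obstacle I anticipate in the general (non-separating) case is precisely the control of the bad-event probability by a positive power of $d(x,y)$---effectively a H\"older regularity estimate for the basin-of-attraction function $x\mapsto P(X_n^x\to+\infty)$---which is the reason $\alpha$ must be chosen small in the first assertion rather than taken up to $\alpha_-(\vec{p}_0)$.
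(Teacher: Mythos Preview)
Your overall strategy---establish a Doeblin--Fortet inequality and invoke Ionescu-Tulcea--Marinescu---is exactly what the paper does. The gap is in how you propose to handle the ``bad'' words.

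You want the bad contribution to be absorbed into $C\|h\|_\infty$ via a bound $\mathbb{P}(\text{bad})\le C' d(x,y)^\beta$. But for a \emph{fixed} iterate $n_0$, this bound is false: when $d(x,y)$ is very small the two coupled trajectories move together for all $n_0$ steps, so the bad event (neither trajectory in $V^\pm$ after $n_0$ steps) has probability essentially equal to $\mathbb{P}(f_{\omega|_{n_0}}(x)\notin V)$, a constant of order $a^{n_0}$ independent of $d(x,y)$. If instead you interpret ``bad'' asymptotically (the trajectories end in different basins), then $\mathbb{P}(\text{bad})=|T_{\vec p}(x)-T_{\vec p}(y)|$, and bounding this by $d(x,y)^\beta$ is precisely the H\"older continuity of $T_{\vec p}$ you are ultimately trying to prove---so the argument becomes circular. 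You flag this as an ``obstacle'' at the end, but it is not merely an obstruction to sharpness: it blocks the scheme as written.

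The paper resolves this by reversing the roles of the two terms. One first splits on the \emph{scale} of $d(x,y)$: if $d(x,y)\ge \Lambda^{-n_0}R$ (with $R$ a Lebesgue number for a finite cover $\{U_{x_j}\}$ such that some word $\beta^j$ sends $U_{x_j}$ into $V$), the trivial bound $2\|h\|_\infty\le 2\Lambda^{n_0\alpha}R^{-\alpha}\|h\|_\infty\, d(x,y)^\alpha$ already gives the $C\|h\|_\infty$ term. If $d(x,y)<\Lambda^{-n_0}R$, then $d(f_{\omega|_k}(x),f_{\omega|_k}(y))<R$ for all $k\le n_0$, so both trajectories lie in the same $U_{x_j}$ at every step and the good/bad decomposition is made \emph{simultaneously} for both. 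The ``bad'' set (no $\beta^j$ applied in $n_0$ blocks) then contributes at most $a^{n_0}[h]_\alpha\Lambda^{n_0\alpha}d(x,y)^\alpha$, which goes into the $r\|h\|_\alpha$ term with $r=(a\Lambda^\alpha)^{n_0}<1$ once $\alpha<-\log a/\log\Lambda$. The ``good'' words are handled by combining the expansion up to the block where $\beta^j$ is applied with the contraction in $V$ thereafter. Thus the smallness of $\alpha$ enters through $a\Lambda^\alpha<1$, not through any a priori H\"older estimate on the basin function.

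For the separating case the paper again proves a Doeblin--Fortet inequality (Lemma~\ref{lem: doeblin fortet inequality}), but the mechanism is more concrete than ``classical thermodynamic formalism'': one uses bounded distortion together with the pointwise estimate $p_{\omega|_j}\le C(\varphi,\psi_{\vec p})\,|f'_{\omega|_j}(x)|^{-\alpha_-(\vec p)}$ (equation~\eqref{eq:uniform psi phi estimate}), and the separating condition to guarantee that trajectories leaving the coded region land in a compact subset of the Fatou set where uniform contraction (Lemma~\ref{lem:uniform contraction on unbounded fatou}) applies. This yields the inequality for every $\alpha<\alpha_-(\vec p_0)$ with constants locally uniform in $\vec p$ by lower semicontinuity of $\alpha_-$.
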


By combining with the perturbation theory of linear operators we
can derive that the probability of tending to infinity $T:=T_{\vec{p}}:\overline{\R}\rightarrow\left[0,1\right]$
(see \eqref{eq:def prob infinty} below for the definition) depends real analytically on $\vec{p}$
(see Theorem \ref{thm:spectralgap} for the detailed statement). This
allows us to make the following definition. Let $\N_0:=\{0,1,\dots \}$.
\begin{defn}
\label{intro def:generalised Takagi}We denote by $\T:=\T_{\vec{p}}$
the $\R$-vector space of generalised Takagi functions generated by
\[
C_{\vec{n}}(x):=C_{\vec{n},\vec{p}}(x):=\frac{\partial^{\sum_{i=1}^{s}n_{i}}}{\partial u_{1}^{n_{1}}\partial u_{2}^{n_{2}}\dots\partial u_{s}^{n_{s}}}T_{(u_{1},\dots,u_{s})}(x)\big|_{(u_{1},\dots,u_{s})=\vec{p},\quad}\vec{n}=(n_{1},\dots,n_{s})\in\N_{0}^{s},\,\,x\in\overline{\R}.
\]
\end{defn}
We say that an element
$C\in\T$ is non-trivial if there exists $(\beta_{\vec{n}})_{\vec{n}}\neq0$
such that $C=\sum_{\vec{n}}\beta_{\vec{n}}C_{\vec{n}}$. For the reason why we call the elements of  $\T$ generalised Takagi functions, we refer to Remark \ref{rem:sharp alpha minus}. 
We then proceed to investigate the regularity of the  elements of $\T$.
The\emph{ }pointwise H\"older exponent of $C\in\T$ at $x\in\R$
is denoted by $\Hol(C,x)$ (see (\ref{eq:hoelder exponent as liminf}) below for the definition).
We denote by $J$ the Julia set of $(f_{i})_{i\in I}$ (see Section
\ref{subsec:General-results}). For $\vec{p}\in(0,1)^{s}$ we define
$\alpha_{+}=\alpha_{+}(\vec{p})$ in Section \ref{subsec:Improved-spectral-gap}.
We say that $(f_{i})_{i\in I}$ satisfies the open set condition if
there exists a non-empty bounded open interval $O\subset\R$ such
that $f_{i}^{-1}(O)\subset O$, for all $i\in I$, and for all $i,j\in I$
with $i\neq j$ we have $f_{i}^{-1}(O)\cap f_{j}^{-1}(O)=\emptyset$.
  By $t^{*}$ we denote the Legendre transform of the function $t$
defined implicitly by a certain topological pressure functional (see
Section \ref{subsec:Dimension-spectrum-of}).  Denote by $\dim_H(A)$ the 
Hausdorff dimension of a set $A\subset \R$ with respect to the Euclidean metric.

\begin{thm}[Theorem \ref{thm:multifrac}]
\label{thm:dimspecintro}
Suppose that $(f_{i})_{i\in I}$ satisfies the open set condition.
Let $C\in\T$ be non-trivial. Then we have for all $\alpha\in[\alpha_{-},\alpha_{+}],$
\[
\dim_{H}\left\{ x\in J\mid\Hol(C,x)=\alpha\right\} =-t^{*}(-\alpha),
\]
and for $\alpha\not\notin[\alpha_{-},\alpha_{+}]$ we have $\left\{ x\in 
J\mid\Hol(C,x)=\alpha\right\} =\emptyset$. The function $g(\alpha):= 
-t^{*}(-\alpha)$ is continuous and concave on 
$[\alpha_{-},\alpha_{+}]$. If $\alpha_{-}<\alpha_{+}$ then  $g$ is 
real-analytic and positive on $(\alpha_{-},\alpha_{+})$ and satisfies $g''<0$ 
on  $(\alpha_{-},\alpha_{+})$.
\end{thm}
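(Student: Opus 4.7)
The plan is to reduce the multifractal analysis of $\Hol(C,\cdot)$ on $J$ to the standard thermodynamic formalism on the full shift $I^{\N}$, pulled back to $J$ through the inverse branches of $(f_{i})_{i\in I}$. The key step is to express $\Hol(C,x)$ as a ratio of two Birkhoff sums along the symbolic code of $x$, and then to read off the spectrum from the Legendre transform of an associated pressure.

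First, I would derive functional equations for the generators $C_{\vec{n}}$. The real-analyticity of $\vec{p}\mapsto T_{\vec{p}}$ supplied by Theorem \ref{thm:specgapintro} together with perturbation theory makes Definition \ref{intro def:generalised Takagi} well posed. Differentiating the fixed-point identity $T_{\vec{p}} = \sum_{i\in I} p_{i}\, T_{\vec{p}}\circ f_{i}$ in $p_{j}$, and using $p_{s+1}=1-\sum_{i=1}^{s} p_{i}$, gives
\[
(I-M_{\vec{p}})\, C_{e_{j}} \;=\; T_{\vec{p}}\circ f_{j} - T_{\vec{p}}\circ f_{s+1},
\]
and an induction on $|\vec{n}|$ yields $(I-M_{\vec{p}})\,C_{\vec{n}} = R_{\vec{n}}$, where $R_{\vec{n}}$ is an explicit multinomial combination of $T_{\vec{p}}\circ f_{i}$ and of lower-order $C_{\vec{m}}$ with $|\vec{m}|<|\vec{n}|$. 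Iterating $C_{\vec{n}} = R_{\vec{n}} + M_{\vec{p}} C_{\vec{n}}$ telescopically, and using the spectral gap of $M_{\vec{p}}$ on $\mathcal{C}^{\alpha}(\overline{\R})$, I would obtain an oscillation estimate of the form
\[
\osc\bigl(C,\, f_{\omega_{1}}^{-1}\circ\cdots\circ f_{\omega_{n}}^{-1}(\overline{O})\bigr) \;\asymp\; p_{\omega_{1}}\cdots p_{\omega_{n}} \cdot \Psi_{C}(\omega_{1},\ldots,\omega_{n}),
\]
valid for every non-trivial $C\in\T$, with $\Psi_{C}$ bounded above and, on a generic set of sequences, bounded away from zero.

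Under the open set condition, each $x\in J$ has a unique symbolic code $\omega(x)=(\omega_{1},\omega_{2},\ldots)\in I^{\N}$, and expansion makes the diameter of the $n$-th cylinder containing $x$ comparable to $|(f_{\omega_{n}}\circ\cdots\circ f_{\omega_{1}})'(x)|^{-1}$. Combining this with the oscillation estimate above and the $\liminf$-definition of $\Hol(C,\cdot)$, I would derive
\[
\Hol(C,x) \;=\; \lim_{n\to\infty} \frac{-\log\bigl(p_{\omega_{1}}\cdots p_{\omega_{n}}\bigr)}{\log\bigl((f_{\omega_{n}}\circ\cdots\circ f_{\omega_{1}})'(x)\bigr)}
\]
whenever both ergodic averages exist. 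Defining $t(q)$ implicitly by the Bowen-type equation $P(-t(q)\log f'_{\omega_{1}}+q\log p_{\omega_{1}})=0$ on $I^{\N}$, the classical multifractal formalism for ratios of Birkhoff sums on a conformal repeller then yields
\[
\dim_{H}\{x\in J:\Hol(C,x)=\alpha\} \;=\; -t^{*}(-\alpha),\qquad \alpha\in[\alpha_{-},\alpha_{+}],
\]
with the level set empty outside $[\alpha_{-},\alpha_{+}]$, this interval being exactly the range of the ratio by the definitions of $\alpha_{\pm}$ in Section \ref{subsec:Improved-spectral-gap}. Continuity, concavity, real-analyticity, positivity and $g''<0$ then follow from Legendre duality combined with real-analyticity and strict convexity of $t$; strict convexity on the interior $(\alpha_{-},\alpha_{+})$ reduces to $\log p_{\omega_{1}}$ and $\log f'_{\omega_{1}}$ not being cohomologous modulo an additive constant, which fails precisely when $\alpha_{-}=\alpha_{+}$.

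The main obstacle lies in establishing the oscillation estimate for an \emph{arbitrary} non-trivial linear combination $C=\sum_{\vec{n}}\beta_{\vec{n}} C_{\vec{n}}$, rather than for the individual generators: because $R_{\vec{n}}$ couples $C_{\vec{n}}$ to all lower-order generators, the telescoped expansion produces a principal term of the expected multiplicative form $p_{\omega_{1}}\cdots p_{\omega_{n}}$, but also subleading corrections that might a priori cancel the leading term for some non-zero $(\beta_{\vec{n}})$. Non-triviality of $C$ must therefore be translated into a non-cancellation/linear-independence statement ensuring that, for every non-zero $(\beta_{\vec{n}})$, the leading Birkhoff contribution survives along a dimensionally full set of symbolic sequences. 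Once this uniformity in $C\in\T$ is secured, the ratio representation and the multifractal formalism proceed without further adjustment.
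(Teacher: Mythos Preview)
Your proof has a genuine gap at the step where you claim
\[
\Hol(C,x)=\lim_{n\to\infty}\frac{-\log(p_{\omega_1}\cdots p_{\omega_n})}{\log\bigl((f_{\omega_n}\circ\cdots\circ f_{\omega_1})'(x)\bigr)}.
\]
This identity is \emph{false} in general under the open set condition: the paper exhibits (Remark after Corollary~\ref{cor:dynamical formulation of hoelder exponent}, referring to \cite{JS20}) systems and points $x=\pi(\omega)$ for which $\Hol(C_0,x)<\lim_n S_n\psi(\omega)/S_n\varphi(\omega)$. The reason is geometric: $\Hol(C,x)$ is computed over Euclidean balls $B(x,r)$, whereas your oscillation estimate controls only the variation on symbolic cylinders $f_{\omega_{|n}}^{-1}(\overline O)$. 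Under the separating condition balls and cylinders are comparable and your argument would go through (this is Corollary~\ref{cor:dynamical formulation of hoelder exponent}); under the mere open set condition, adjacent cylinders touch, so $B(x,r)$ can spill into a neighbouring cylinder $[\tau]$ where $p_\tau$, and hence $\osc C$, is much larger than on the cylinder of $x$. Your passage from the cylinder oscillation estimate to the H\"older exponent formula breaks precisely here.

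The paper circumvents this by abandoning the pointwise identity and arguing separately for each inequality. Proposition~\ref{prop:lower bound} gives $\Hol(C,x)\cdot(1+\liminf n^{-1}\log\delta_n/\max\varphi)\ge\liminf S_n\psi/S_n\varphi$ with an explicit boundary correction $\delta_n=d(f_{\omega_{|n}}(x),\partial O)$; this correction vanishes $\mu_\beta$-a.e.\ by Patzschke~\cite{MR1479016}, yielding the lower dimension bound (Corollary~\ref{cor:lowerbound-almosteverywhere}). For the upper bound with $\alpha\ge\alpha_0$ one only needs $\Hol(C,x)\le\liminf S_n\psi/S_n\varphi$ (Proposition~\ref{prop:upperbound pointwise hoelder}), which suffices to embed the level set into $\mathcal F^{\#}(\alpha)$. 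The delicate case is $\alpha<\alpha_0$: here Proposition~\ref{prop:lower half of spectrum upper bound} builds, for each $x$ with $\Hol(C,x)=\alpha$, a pair of cylinders $\nu_k,\nu_k'$ of comparable diameter such that $B(x,r_k)\cap J\subset\pi[\nu_k]\cup\pi[\nu_k']$ and at least one of them lies in $\mathcal C_{\alpha+\epsilon}$; the correspondence $\nu\mapsto\nu'$ has uniformly bounded multiplicity, so a pressure estimate over $\mathcal C_{\alpha+\epsilon}$ still controls the Hausdorff sum. The non-cancellation obstacle you flag for arbitrary $C\in\T$ is real but secondary; it is resolved via the matrix cocycle $A(\omega,k)$ and a Vandermonde argument (Lemmas~\ref{lem:growth of solutions} and~\ref{lem:good points are dense}), feeding into Proposition~\ref{prop:upperbound pointwise hoelder}.
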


We prove the following result regarding the global H\"older continuity of elements of $\T$.
\begin{thm}[Theorem \ref{thm:alpha-minus hoelder}, Corollary \ref{cor:optimal hoelder continuity}]
\label{intro thm:global hoelder}Suppose that $(f_{i})_{i\in I}$
satisfies the open set condition. Let $C\in\T$ be non-trivial. Then
we have $\alpha_{-}=\sup\left\{ \alpha\ge0\mid C\in\mathcal{C}^{\alpha}(\overline{\R})\right\} $.
Further, we have $T\in\mathcal{C}^{\alpha_{-}}\left(\overline{\R}\right)$. 
\end{thm}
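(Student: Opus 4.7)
The plan is to establish the first claim by combining the upper bound coming from the multifractal spectrum of Theorem~\ref{thm:dimspecintro} with the lower bound coming from the real-analytic dependence of $T_{\vec{p}}$ on $\vec{p}$ into $\mathcal{C}^{\alpha}(\overline{\R})$ for every $\alpha<\alpha_{-}$ (a consequence of the spectral gap property). The endpoint statement $T\in\mathcal{C}^{\alpha_{-}}(\overline{\R})$ will then require an additional direct estimate based on the fixed-point equation $T=M_{\vec{p}}T$.

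For the upper bound $\sup\{\alpha\ge0:C\in\mathcal{C}^{\alpha}(\overline{\R})\}\le\alpha_{-}$, Theorem~\ref{thm:dimspecintro} gives $\dim_{H}\{x\in J:\Hol(C,x)=\alpha_{-}\}=-t^{*}(-\alpha_{-})\ge0$, so this level set is non-empty; picking any $x_{0}$ in it forces $\alpha\le\Hol(C,x_{0})=\alpha_{-}$ whenever $C\in\mathcal{C}^{\alpha}(\overline{\R})$, by the very definition of the pointwise H\"older exponent. For the matching lower bound, fix $\alpha<\alpha_{-}$; the detailed version of Theorem~\ref{thm:spectralgap} referred to after Theorem~\ref{thm:specgapintro}, combined with standard analytic perturbation theory of the leading eigenprojection, yields that $\vec{p}'\mapsto T_{\vec{p}'}$ is real-analytic from a neighborhood of $\vec{p}$ in $(0,1)^{s}$ into $\mathcal{C}^{\alpha}(\overline{\R})$. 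Taking mixed partial derivatives in $\vec{p}'$ at $\vec{p}$ keeps us inside $\mathcal{C}^{\alpha}(\overline{\R})$, so each $C_{\vec{n}}\in\mathcal{C}^{\alpha}(\overline{\R})$, and by linearity so does every $C\in\T$.

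For the endpoint $T\in\mathcal{C}^{\alpha_{-}}(\overline{\R})$, iterate the invariance to obtain $T=\sum_{\omega\in I^{n}}p_{\omega}\,T\circ f_{\omega}$ for every $n\ge1$, and estimate $|T(x)-T(y)|$ by splitting the sum over $\omega$ according to whether $f_{\omega}(x),f_{\omega}(y)$ have entered the contraction neighborhoods $V^{\pm}$ of $\pm\infty$ (where the chain rule gives a Lipschitz-type bound through $\prod\Lip(f_{\omega_{j}}|_{V^{\pm}})<1$, together with $\|T\|_{\infty}\le1$) or remain in a fixed compact set $K\subset\R$ (where the open set condition yields bounded distortion on cylinders). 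The pressure-theoretic definition of $\alpha_{-}$ from Section~\ref{subsec:Improved-spectral-gap} will ensure $\sup_{n}\sum_{\omega\in I^{n}}p_{\omega}\,\|f_{\omega}'|_{K}\|_{\infty}^{\alpha_{-}}<\infty$, and combining both contributions should produce a uniform bound $|T(x)-T(y)|\le C\,d(x,y)^{\alpha_{-}}$ for all $x,y\in\overline{\R}$.

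The hard part is precisely this endpoint estimate: at $\alpha=\alpha_{-}$ the spectral gap of $M_{\vec{p}}$ just fails, so operator theory no longer delivers the regularity of $T$ directly and the sharp numerical value of $\alpha_{-}$ must be exploited. The key technical difficulty is performing the ``compact vs.\ near-infinity'' dichotomy uniformly in the iteration depth $n$, with $n$ chosen adaptively in terms of $d(x,y)$, so that neither tail blows up as $n\to\infty$; the bounded-distortion estimates coming from the open set condition are what allow the critical H\"older exponent to be attained by $T$ itself even though it need not be attained by higher-order derivatives $C_{\vec{n}}$.
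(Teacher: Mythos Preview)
Your upper bound is fine and matches the paper's route via Theorem~\ref{thm:multifrac}. The gap is in the lower bound $C\in\mathcal{C}^{\alpha}(\overline{\R})$ for every $\alpha<\alpha_{-}$. You invoke the spectral gap of $M_{\vec p}$ on $\mathcal{C}^{\alpha}(\overline{\R})$ for arbitrary $\alpha<\alpha_{-}$, but in this paper that is proved only under the \emph{separating condition} (Theorem~\ref{thm: spectral gap separating condition}); the present statement assumes merely the \emph{open set condition}, under which Theorem~\ref{thm:spectralgap} yields the spectral gap only for \emph{some} unspecified $\alpha>0$. So real-analyticity of $\vec p'\mapsto T_{\vec p'}$ into $\mathcal{C}^{\alpha}(\overline{\R})$ is not available for $\alpha$ close to $\alpha_{-}$, and your derivative argument does not go through. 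The paper instead argues directly (Theorem~\ref{thm:alpha-minus hoelder}(2)): for $x,y\in J$ one locates the deepest cylinder $\tau$ with $[x,y]\subset f_{\tau}^{-1}(\overline{O})$, uses Lemmas~\ref{lem:functional equation via cocycle} and~\ref{lem:growth of cocycle} to get $|C_{\vec n}(x)-C_{\vec n}(y)|\le K'|\tau|^{|\vec n|}p_{\tau}$, and then converts the single-term bound $p_{\tau}\le C(\varphi,\psi_{\vec p})\,\e^{\alpha_{-}S_{|\tau|}\varphi(\overline{\tau})}$ from \eqref{eq:uniform psi phi estimate}, together with bounded distortion, into an $\alpha$-H\"older estimate for any $\alpha<\alpha_{-}$.

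Your endpoint sketch has a separate error: the claimed bound $\sup_{n}\sum_{\omega\in I^{n}}p_{\omega}\,\|f_{\omega}'|_{K}\|_{\infty}^{\alpha_{-}}<\infty$ is false. The summands are comparable to $\e^{S_{n}(\psi-\alpha_{-}\varphi)}$, and since $-\alpha_{-}\varphi>0$ one has $\mathcal{P}(\psi-\alpha_{-}\varphi)>\mathcal{P}(\psi)=0$, so the sum grows exponentially in $n$. The paper avoids summing over all $\omega\in I^{n}$: for $x<y$ in $J$ it again picks the (essentially unique) branch $\tau$ along which both points stay in $\overline{O}$, uses the open set condition to get the sharp oscillation bound $|T(x)-T(\xi)|\le p_{\tau}$ with $\xi$ a cylinder endpoint (see \eqref{eq:osc estimate}--\eqref{eq:estimate for Fp}), and then the single estimate $p_{\tau}\le C(\varphi,\psi_{\vec p})\,\e^{\alpha_{-}S_{|\tau|}\varphi(\overline{\tau})}\asymp d(x,\xi)^{\alpha_{-}}$ gives $T\in\mathcal{C}^{\alpha_{-}}(\overline{\R})$. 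The absence of the polynomial factor $|\tau|^{|\vec n|}$ is precisely why $T$ attains the critical exponent while a general $C\in\T$ need not.
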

The
following corollary indicates that the random dynamical system generated by 
$(f_{i})_{i\in
	I}, (p_{i})_{i\in I}$ still has some kind of 
complexity. 
\begin{cor}[Corollary \ref{cor:transition operator chaotic on some Calpha}]
\label{cor:transitionintro}
Suppose that $(f_{i})_{i\in I}$ satisfies the open set condition.
If $\alpha_{-}<1$ then \newline $\lim_{n\rightarrow\infty}\Vert M_{\vec{p}}^{n}\Vert_{\alpha}=\infty$,
for each $\alpha_{-}<\alpha<1$, where $\Vert M_{\vec{p}}^{n}\Vert_{\alpha}$
denotes the operator norm of $M_{\vec{p}}^{n}$ on $\mathcal{C}^{\alpha}(\overline{\R})$. 
\end{cor}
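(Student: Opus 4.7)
The plan is to argue by contradiction. Fix $\alpha \in (\alpha_{-}, 1)$ and suppose that $\|M_{\vec{p}}^{n_k}\|_\alpha \leq K < \infty$ along a subsequence $(n_k)$. I will produce a Lipschitz test function $h_0 \in \mathcal{C}^\alpha(\overline{\R})$ such that $M_{\vec{p}}^n h_0 \to T$ uniformly on $\overline{\R}$; combined with the operator-norm bound this will force $T \in \mathcal{C}^\alpha(\overline{\R})$ by the lower semicontinuity of the $\alpha$-H\"older seminorm under uniform convergence, and Theorem~\ref{intro thm:global hoelder}---applied to the non-trivial element $T = C_{\vec{0}} \in \T_{\vec{p}}$---then yields the contradiction $\alpha \leq \alpha_{-}$.

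Theorem~\ref{thm:specgapintro} supplies an exponent $\alpha' > 0$ on which $M_{\vec{p}}$ has the spectral gap property; without loss of generality $\alpha' \leq \alpha$. A standard analysis of the random dynamics shows that the fixed-point space of $M_{\vec{p}}$ on $\mathcal{C}(\overline{\R})$ is two-dimensional, spanned by $\mathbf{1}$ and $T$---a consequence of expansion combined with contraction-near-infinity, which forces $\pm\infty$ to be the only absorbing states of the Markov chain. Moreover, $1$ is the only peripheral eigenvalue, so that $M_{\vec{p}}^n h \to P_1 h$ in $\mathcal{C}^{\alpha'}$-norm, and hence uniformly on $\overline{\R}$, for every $h \in \mathcal{C}^{\alpha'}(\overline{\R})$. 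I take $h_0$ to be any Lipschitz function on $(\overline{\R}, d)$ with $h_0(\pm\infty) = T(\pm\infty)$; since $f_i(\pm\infty) = \pm\infty$ the iterates preserve these boundary values for every $n$, and the uniqueness of a fixed function with prescribed boundary values gives $P_1 h_0 = T$.

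From $\|M_{\vec{p}}^{n_k} h_0\|_\alpha \leq K\|h_0\|_\alpha$ together with the uniform convergence $M_{\vec{p}}^{n_k} h_0 \to T$, one obtains $T \in \mathcal{C}^\alpha(\overline{\R})$ with $\|T\|_\alpha \leq K\|h_0\|_\alpha$. But $T = C_{\vec{0}}$ is non-trivial in $\T_{\vec{p}}$ (witnessed by the coefficient $\beta_{\vec{0}} = 1$), so Theorem~\ref{intro thm:global hoelder} gives $\sup\{\beta \geq 0 : T \in \mathcal{C}^\beta(\overline{\R})\} = \alpha_{-}$, contradicting $\alpha > \alpha_{-}$. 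Since the subsequence $(n_k)$ was arbitrary, $\liminf_n \|M_{\vec{p}}^n\|_\alpha = \infty$, which is exactly the stated limit.

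The main technical obstacle I anticipate is the verification that the peripheral spectrum of $M_{\vec{p}}$ on $\mathcal{C}^{\alpha'}(\overline{\R})$ consists of $\{1\}$ alone, so that the iterate sequence $M_{\vec{p}}^n h_0$ truly converges rather than oscillating among unimodular-eigenvalue phase multiples; this should follow from a mixing argument for the associated Markov chain together with the identification of the fixed space above. If that step proves delicate, one can instead pass to the Ces\`aro averages $\frac{1}{N}\sum_{k<N} M_{\vec{p}}^k h_0$, which converge to $P_1 h_0$ under spectral gap alone, and derive the contradiction under the weaker assumption $\sup_n \|M_{\vec{p}}^n\|_\alpha < \infty$; upgrading to $\lim = \infty$ is then a routine Arzel\`a-Ascoli extraction along any putative bounded subsequence of $(M_{\vec{p}}^n h_0)$.
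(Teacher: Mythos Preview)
Your argument is correct and follows essentially the same route as the paper's proof: pick $\phi\in\mathcal{C}^{\alpha}(\overline{\R})$ with $\phi(\infty)=1$, $\phi(-\infty)=0$, use the uniform convergence $M_{\vec p}^{n}\phi\to T_{\vec p}$ to pass a H\"older bound along a subsequence to $T_{\vec p}$, and contradict Theorem~\ref{intro thm:global hoelder}. The paper in fact proves the slightly stronger statement that $\Vert M_{\vec p}^{n}\phi\Vert_{\alpha}\to\infty$ for this single $\phi$, which immediately yields your operator-norm conclusion.

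Your final paragraph of worries is unnecessary: the paper has already shown in Section~\ref{subsec:General-results} (via almost periodicity and Ljubich's theorem) that the unimodular eigenspace of $M_{\vec p}$ on $\mathcal{C}(\overline{\R})$ is exactly $\R T_{\vec p}\oplus\R\1$, and that $\Vert M_{\vec p}^{n}h-(h(\infty)-h(-\infty))T_{\vec p}-h(-\infty)\1\Vert_{\infty}\to0$ for every $h\in\mathcal{C}(\overline{\R})$. So the peripheral spectrum is $\{1\}$ and the uniform convergence you need is already available without invoking the spectral gap on $\mathcal{C}^{\alpha'}$ or passing to Ces\`aro averages.
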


Regarding the existence of points of non-differentiability of elements
of $\T$ we prove the following. Let $\vec{e}_{k}\in\N^{s}_0$ denote
the $k$-th unit vector in $\N_{0}^{s}$, $1\le k\le s$. We use $C_{m}$
to denote $C_{(m)}$ for $m\in\N_{0}$. 
\begin{prop}[Proposition \ref{prop:non-differentiability}]
\label{intro prop:non-differentiable} Suppose that $(f_{i})_{i\in I}$
satisfies the open set condition. 
\begin{enumerate}
\item If $\alpha_{-}<1$,  then there exists a dense subset $E\subset J$
of positive Hausdorff dimension such that, for every non-trivial $C\in\T$
and every $x\in E$, $C$ is not differentiable at $x$.
\item If $\alpha_{-}=1$,  $s=1$ and $f_1'$ and $f_2'$ are constant functions,  
then $C_{m}$ is
nowhere differentiable on $J$, for every $m\ge1$. 
\end{enumerate}
\end{prop}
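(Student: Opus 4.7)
For part~(1), my plan is to produce a single set $E\subset J$ which witnesses non-differentiability for every non-trivial $C\in\T$ simultaneously. The key observation, which I expect to extract from the proof of Theorem~\ref{thm:dimspecintro}, is that for each $x\in J$ the pointwise H\"older exponent $\Hol(C,x)$ depends only on the symbolic trajectory $\omega$ of $x$ under $(f_i^{-1})_{i\in I}$ and the data $(\vec p,(f_i))$, not on the particular non-trivial $C$. This is natural because $\Hol(C,x)$ is controlled both from above and from below by the Birkhoff-type ratio $\log(p_{\omega_1}\cdots p_{\omega_n})/\log|(f_{\omega_1\cdots\omega_n})'(x)|$, which is intrinsic to~$x$. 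Granting this, under the hypothesis $\alpha_-<1$ I choose $\alpha^{*}\in(\alpha_-,\min(\alpha_+,1))$ when $\alpha_-<\alpha_+$ and set $E:=\{x\in J\mid \Hol(C,x)=\alpha^{*}\}$; the degenerate case $\alpha_-=\alpha_+$ is even simpler, since one may then take $E:=J$. By Theorem~\ref{thm:dimspecintro}, $\dim_{H}E=-t^{*}(-\alpha^{*})>0$, and density of $E$ in $J$ follows by arranging $E$ to contain the $\pi_{*}\mu$-generic set for an ergodic, fully-supported shift-invariant measure $\mu$ on $I^{\N}$ realising the exponent $\alpha^{*}$ (such $\mu$ exist by standard thermodynamic formalism). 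The inequality $\Hol(C,x)=\alpha^{*}<1$ then rules out differentiability at each $x\in E$.

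The main obstacle in part~(1) is the $C$-independence of $\Hol(C,x)$ on $E$. Should it fail to hold pointwise, my fallback is to use the countable-dimensional span $\{C_{\vec n}:\vec n\in\N_{0}^{s}\}$: prove non-differentiability of each $C_{\vec n}$ on a set $E_{\vec n}$ of positive Hausdorff dimension, intersect over $\vec n$, and then exploit the inequality $\Hol(f+g,x)\geq\min(\Hol(f,x),\Hol(g,x))$ together with the fact that this minimum is attained generically in symbolic codings, so the weaker exponent is preserved by non-trivial linear combinations.

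For part~(2), with $s=1$, $p:=p_{1}$, and $f_{1},f_{2}$ affine of constant slopes $\lambda_{1},\lambda_{2}>1$, I plan to reduce to an explicit Takagi-type series. Differentiating the fixed-point equation $T=M_{\vec p}T$ in $p$ yields a functional equation $(I-M_{\vec p})C_{m}=\phi_{m}$ with $\phi_{m}$ an explicit polynomial in $T\circ f_{1}$, $T\circ f_{2}$ and the lower-order $C_{k}$. Using the spectral gap of Theorem~\ref{thm:specgapintro} on the mean-zero part, I obtain the convergent representation
\[
C_{m}=\sum_{n\ge 0}M_{\vec p}^{\,n}\phi_{m}=\sum_{n\ge 0}\sum_{\omega\in I^{n}}p_{\omega}\,\phi_{m}\circ f_{\omega},
\]
which, because $f_{1},f_{2}$ are affine, is the exact analogue of the defining series of the classical Takagi function. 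Fixing $x\in J$ with symbolic code $\omega$, I construct pairs $x_{n}^{\pm}\to x$ by toggling the $n$-th symbol of $\omega$ and estimate the difference quotient $(C_{m}(x_{n}^{+})-C_{m}(x_{n}^{-}))/(x_{n}^{+}-x_{n}^{-})$. In the critical regime $\alpha_-=1$ each scale contributes in the same order of magnitude, so a Koksma-type sum argument, modelled on the classical nowhere-differentiability proof of the Takagi function, shows that this difference quotient is unbounded. The hard step is ruling out massive cancellation uniformly over $x\in J$ and $m\ge 1$; I expect to handle this by verifying non-triviality of $\phi_{m}$ directly from the recursion and by separating the contributions according to whether $\omega_{n}=1$ or $\omega_{n}=2$ so that a definite lower bound on the oscillation survives at each scale.
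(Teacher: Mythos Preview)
Your ``key observation'' that $\Hol(C,x)$ depends only on the symbolic trajectory of $x$ and not on $C$ is false in general under the open set condition: the paper notes this explicitly (see the remark following Corollary~\ref{cor:dynamical formulation of hoelder exponent}), and in fact the lower bound in Proposition~\ref{prop:lower bound} carries the boundary-distance correction $\liminf_n n^{-1}\log\delta_n$, which need not vanish for arbitrary $x$. Your approach is salvageable if you restrict $E$ to the $\pi_*\mu$-generic set of a Gibbs measure realising $\alpha^*$ (there the boundary correction does vanish, as in Corollary~\ref{cor:lowerbound-almosteverywhere}), but this is more than you need. The paper's argument is simpler: it uses only the \emph{upper} bound of Proposition~\ref{prop:upperbound pointwise hoelder}, which \emph{is} $C$-independent. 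One takes $E:=\pi(\mathcal{F}(\alpha))$ for some $\alpha\in(\alpha_-,\alpha_+)$ with $\alpha<1$ (the case $\alpha_-=\alpha_+<1$ gives $E=J$); then $\Hol(C,x)\le\alpha<1$ for every non-trivial $C$ and every $x\in E$, and positive dimension and density of $E$ come from the multifractal formalism. No lower bound on $\Hol$, no intersection over $\vec n$, and no fallback are required.

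\textbf{Part (2).} Your series-expansion plan is a plausible alternative, but the ``hard step'' you flag---ruling out cancellation uniformly in $x$ and $m$---is exactly where the work lies, and your outline does not indicate how to close it. The paper avoids this by exploiting the matrix cocycle $A(\omega,n)$ already set up in Section~\ref{section:Multifractal}. The crucial simplification is that $\alpha_-=1$ together with the affine hypothesis forces $\varphi=\psi$ (not merely cohomologous), so $p_{\omega_{|n}}/|x_n-y_n|$ is a nonzero constant. Choosing $x_n=(f_{\omega_{|n}})^{-1}(\min J)$ and $y_n=(f_{\omega_{|n}})^{-1}(\max J)$, Lemma~\ref{lem:functional equation via cocycle} gives the exact identity $\gamma_n\cdot(b-a)=-A(\omega,n)_{m,0}$, and one shows by a short induction on $m$, via the recursion $A(\omega,n+1)_{m,0}-A(\omega,n)_{m,0}=m\,A(\omega,n)_{m-1,0}\bigl(\tfrac{\delta_{1,\omega_{n+1}}}{p_1}-\tfrac{\delta_{2,\omega_{n+1}}}{p_2}\bigr)$, that $(A(\omega,n)_{m,0})_n$ does not converge. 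This establishes non-differentiability directly (note: non-convergence of the two-sided difference quotient suffices; you need not prove unboundedness). Your Takagi-series route may ultimately reach the same recursion, but the cocycle formalism gets there without any spectral or convergence estimates.
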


For applications of our results to conjugacies of interval maps we
refer to Section \ref{sec:Conjugaries-between-Interval}. In fact,
if $(f_{i})_{i\in I}$ satisfies the open set condition, then the
probability of tending to infinity $T_{\vec{p}}$ can also be characterised
as the conjugacy map between the expanding dynamical system defined
by $(f_{i})_{i\in I}$ on $J$ and the dynamical system given by the
piecewise linear map on $[0,1]$ with $(s+1)$ full branches and slopes
given by $(1/p_{i})_{i\in I}$ (see Lemma \ref{lem:T is conjugacy}).
By the rigidity dichotomy in \cite[Theorem 1.2]{MR2576266}, if  $J$
is an interval, then either $\alpha_{-}(\vec{p})=\alpha_{+}(\vec{p})$
and $T_{\vec{p}}$ is a $\mathcal{C}^{1+\epsilon}$-diffeomorphism,
or $\alpha_{-}(\vec{p})<\alpha_{+}(\vec{p})$ and the set of non-differentiability
points of $T_{\vec{p}}$ has positive Hausdorff dimension. For general
families $(f_{i})_{i\in I}$ satisfying the open set condition, we
can show that $\alpha_{-}(\vec{p})=\alpha_{+}(\vec{p})$ if and only
if $T_{\vec{p}}\in\mathcal{C}^{\dim_{H}(J)}(\overline{\R})$ (see
Section \ref{sec:Conjugaries-between-Interval}). 

Higher order derivatives of the classical Takagi function have been
considered in \cite{MR2212280}, where it is shown that the classical
Takagi function and the higher order derivatives of the Lebesgue singular
function for $p=1/2$ are nowhere differentiable and convex Lipschitz
(\cite{MR860394}). These results are covered by our general theory.
Namely, since for this special case we have $\alpha_{-}=1$ and $s=1$,
the non-differentiability follows from Proposition \ref{intro prop:non-differentiable}
(2). That these functions are $\alpha$-H\"older continuous, for
every $\alpha<1$, follows from Theorem \ref{intro thm:global hoelder}.
In fact, we can also derive that the functions are convex Lipschitz
(see Remark \ref{rem:convex lipschitz}). 

Generalised Takagi functions (with respect to the
parameter $\vec{p}$) have also been introduced in \cite{MR839313}.
As already observed in \cite{MR2212280}, the higher-order derivatives
of   $T_{\vec{p}}$ of a system  $(f_{i})_{i\in I}$ with constant derivatives are not
covered by the setting in \cite{MR839313}. In \cite{MR1111613} it
is shown that the Lebesgue singular function depends real analytically
on the parameter, and its higher order derivatives are considered.
We point out that our general definition of $\T$ is a far-reaching
generalisation of the above concept, where we consider an arbitrary finite number of $\mathcal{C}^{1+\epsilon}$
diffeomorphisms and arbitrary linear combinations of higher order
partial derivatives of the probability of tending to infinity with
$s\ge1$ parameters.

We remark that in the previous works of the authors \cite{JS13b, JS2016} we dealt with the random complex dynamical systems satisfying the separating condition. However, in this paper, we deal with random dynamical systems on the real line satisfying the open set condition. Note that the separating condition implies the open set condition. When we deal with general systems satisfying the open set condition,  we have to overcome new difficulties. In fact, the relation between the pointwise H\"older exponents of elements of $\T$  and the corresponding dynamical quantities is much more involved than in the case of the separating condition. We developed several new ideas (see Propositions \ref{prop:lower bound} and \ref{prop:lower half of spectrum upper bound}) to overcome these difficulties. In the case of the separating condition we show that $\alpha_-$ is the supremum of the exponents $\alpha$ for which the transition operator has the spectral gap property on $\mathcal{C}^{\alpha}$ by developing some idea from \cite{JS2016} and providing a new approach.

In Section~\ref{sec:Spectral-gap-property}, we derive the spectral gap property 
for the 
transition operator associated with random dynamical systems on the real line. 
In Section \ref{section:Multifractal}, we perform a complete multifractal 
analysis of the
pointwise H\"older exponents of the elements of $\T$ associated
with $(f_{i})_{i\in I}$ and $\vec{p}\in(0,1)^{s}$. 
In Section~\ref{sec:Global-Hlder-continuity},
 we investigate the global H\"older continuity of
the elements of $\T$. In Section~\ref{sec:Non-differentiability}, 
we study  the (non-)differentiability of the
elements of $\T$. In Section~\ref{sec:Conjugaries-between-Interval}, 
we show how our results are related to interval conjugacy
maps. In Section~\ref{sec:Contractions-near-infinity} (Appendix), 
we will show that, by modifying
the $(f_{i})_{i\in I}$ near infinity, we can always assume that an 
 expanding family $(f_{i})_{i\in I}$ is contracting near infinity
with respect to a metric $d$ which is strongly equivalent to the
Euclidean metric on compact subsets of $\R$.  

{\bf Acknowledgements.}  
The authors would like to thank Rich Stankewitz for valuable comments. The research of the first author was partially supported by JSPS Kakenhi 15H06416, 17K14203. The research of the second author was partially supported by JSPS Kakenhi 15K04899, 18H03671. 

\section{Spectral gap property\label{sec:Spectral-gap-property}}

In this section we derive the spectral gap property for the transition
operator associated with random dynamical systems on the real line
induced by a family $(f_{i})_{i\in I}$. 

\subsection{General results \label{subsec:General-results}}

Let $I^{*}:=\bigcup_{n\in\N}I^{n}$. For $\omega\in I^{*}$ we denote
by $\left|\omega\right|$ the unique $n\in\N$ such that $\omega\in I^{n}$.
For $\omega=\left(\omega_{1},\dots,\omega_{n}\right)\in I^{n}$ we
let $f_{(\omega_{1},\dots,\omega_{n})}:=f_{\omega_{n}}\circ\dots\circ f_{\omega_{1}}$.
Let $\Sigma := I^{\Bbb{N}}$. Also, for $\omega\in \Sigma$ and $n\in\N$ we put 
$\omega_{|n}:=(\omega_{1},\dots,\omega_{n})\in I^{n}$.
Since $(f_{i})_{i\in I}$ is contracting near infinity, there exist
neighborhoods $V^{\pm}$ of $\pm\infty$ in $\overline{\R}$ such
that, for each $x\in V^{+}$ (resp.\,\,  $x\in V^{-}$) we have for all
$\omega\in \Sigma$, 
\[
f_{\omega_{|n}}(x)\rightarrow+\infty\quad(\text{resp.}f_{\omega_{|n}}(x)\rightarrow-\infty),\quad\text{as }n\rightarrow\infty.
\]
We put 
\[
V:=V^{+}\cup V^{-}.
\]
Denote by $G:=\left\langle f_{1},\dots f_{s+1}\right\rangle :=\left\{ f_{\omega}\mid\omega\in I^{*}\right\} $
the semigroup generated by $f_{1},\dots,f_{s+1}$ where the semigroup
operation is the composition of functions. The\emph{ Julia set of
$G$ }is defined as 
\[
J:=\left\{ x\in\overline{\R}\mid G\text{ is not equicontinuous in any neighborhood of }x \text{ with respect to }d\right\} .
\]
Note that the inverse maps $(f_{i}^{-1})_{\big|\overline{\R}\setminus V},i\in I$,
form a contracting conformal iterated function system (see e.g. \cite{falconerfractalgeometryMR2118797,MR2003772})
and $J$ is the limit set (or attractor) of this system. 

For $\omega=\left(\omega_{1},\omega_{2},\dots\right)\in\Sigma$
we define 
\[
J_{\omega}:=\bigcap_{n\in\N}\left(f_{\omega_{|n}}\right)^{-1}\left(\overline{\R}\setminus V\right).
\]
Note that $J_{\omega}$ is a singleton because $(f_{i})_{i\in I}$
is  expanding. We define the coding map $\pi:\Sigma\rightarrow\R$
given by 
\[
\bigcap_{n\in\N}(f_{\omega_{|n}})^{-1}(\overline{\R}\setminus V)=\left\{ \pi(\omega)\right\} ,\quad\omega\in\Sigma.
\]
It is easy to see that 
\[
J=\bigcup_{\omega\in\Sigma}J_{\omega}=\pi(\Sigma).
\]

The \emph{kernel Julia set of $G$} (\cite{s11random}) is given by
\[
\Lker:=\bigcap_{g\in G}g^{-1}\left(J\right)\subset J.
\]

For $\vec{p}=(p_{1},\dots,p_{s})\in\left(0,1\right)^{s}$ with $\sum_{i=1}^{s}p_{i}<1$,
let $p_{s+1}:=1-\sum_{i=1}^{s}p_{i}$. Let $\mu_{\vec{p}}$ denote
the $\left(p_{1},\dots,p_{s},p_{s+1}\right)$ Bernoulli measure on
$\Sigma$. 
\begin{prop}
\label{prop:almost everywhere converges to minimal sets}We have $\Lker=\emptyset$.
Moreover, we have $\mu_{\vec{p}}\left(\left\{ \omega\in\Sigma\mid x\in J_{\omega}\right\} \right)=0$,
for each $x\in\R$. 
\end{prop}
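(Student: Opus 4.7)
The argument has two parts. For the first, my plan is to show $\Lker$ is a bounded, $f_i$-forward-invariant subset of $\R$ on which each $f_i$ is $\lambda$-expanding, forcing it to be empty or a common fixed point of all $f_i$. Boundedness comes from $\Lker \subset J$, where $J = \pi(\Sigma)$ is compact and the inclusion uses the self-similarity $J = \bigcup_i f_i^{-1}(J)$ (which yields $f_i^{-1}(J) \subset J$); forward invariance $f_i(\Lker) \subset \Lker$ holds because $g \circ f_i \in G$ for every $g \in G$. The mean value theorem together with $f_i' \geq \lambda$ gives $\diam(f_i(\Lker)) \geq \lambda \diam(\Lker)$, and combined with $f_i(\Lker) \subset \Lker$ this forces $\diam(\Lker) = 0$. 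The singleton case is ruled out by the no-common-fixed-point hypothesis.

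For the second assertion, since $J_\omega = \{\pi(\omega)\}$, the set $\{\omega \in \Sigma : x \in J_\omega\}$ equals $\pi^{-1}(x)$, so the task reduces to proving $\phi(x) := \mu_{\vec{p}}(\pi^{-1}(x)) \equiv 0$. The plan combines three ingredients: a functional equation for $\phi$, its upper semicontinuity, and a maximum-principle argument driven by $\Lker = \emptyset$. The semi-conjugacy $f_{\omega_1}(\pi(\omega)) = \pi(\sigma\omega)$ (with $\sigma$ the shift on $\Sigma$) and the injectivity of each $f_i$ on $\overline{\R}$ yield a bijection $\pi^{-1}(x) \cap \{\omega : \omega_1 = i\} \to \pi^{-1}(f_i(x))$ via $\sigma$; the product structure of $\mu_{\vec{p}}$ then yields $\phi(x) = \sum_i p_i \phi(f_i(x)) = (M_{\vec{p}}\phi)(x)$. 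Moreover $\phi$ vanishes off $J$ and $0 \leq \phi \leq 1$.

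Upper semicontinuity uses the observation that $f_i(V) \subset V$ (since $\pm\infty$ are fixed points with $\Lip(f_{i|V^{\pm}}) < 1$), equivalently $f_i^{-1}(\overline{\R}\setminus V) \subset \overline{\R}\setminus V$. This collapses the nested escape condition so that
\[
\phi(x) = \lim_{N \to \infty} \sum_{\omega \in I^N,\ f_\omega(x) \in \overline{\R}\setminus V} p_\omega
\]
is a decreasing limit of finite sums of upper semicontinuous indicator functions of the closed sets $f_\omega^{-1}(\overline{\R}\setminus V)$, and hence itself upper semicontinuous. As $\overline{\R}$ is compact, $\phi$ attains its supremum $s$ at some $y^*$. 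The functional equation combined with $\phi \leq s$ and $\sum_i p_i = 1$ forces $\phi(f_i(y^*)) = s$ for every $i$, and by induction $\phi(f_\omega(y^*)) = s$ for every $\omega \in I^*$. If $s > 0$, then $f_\omega(y^*) \in J$ for every $\omega$, so $y^* \in \Lker = \emptyset$, a contradiction; thus $\phi \equiv 0$.

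The main obstacle is establishing the upper semicontinuity of $\phi$: its direct description $\mu_{\vec{p}}(\bigcap_n \{\omega : f_{\omega_{|n}}(x) \in \overline{\R}\setminus V\})$ is a countable intersection of conditions not immediately amenable to upper semicontinuity analysis, and the key is the forward-invariance collapse that reduces it to a decreasing limit of upper semicontinuous finite sums. Once this is in hand, the remainder is a clean compactness-plus-maximum-principle reduction of the measure-theoretic statement to the dynamical one from the first part.
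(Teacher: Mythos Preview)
Your proof is correct and, for both parts, takes a route different from the paper's.

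For $\Lker=\emptyset$, the paper argues directly that for each $x\in\R$ some iterate $g_x\in G$ pushes $x$ into $V$ (using that some $f_i$ does not fix $x$ and then iterating that single map), whence $x\notin g_x^{-1}(J)$. Your diameter argument---$\Lker\subset J$ bounded, $f_i(\Lker)\subset\Lker$, and $\diam(f_i(\Lker))\ge\lambda\,\diam(\Lker)$ forcing $\diam(\Lker)=0$, with the singleton case excluded by the no-common-fixed-point hypothesis---is a clean alternative that avoids tracking any particular orbit.

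For the measure statement, the paper simply invokes \cite[Lemma~4.6]{s11random}. Your argument is self-contained: you establish the functional equation $\phi=M_{\vec{p}}\phi$ from the Bernoulli structure and the semi-conjugacy $f_{\omega_1}\circ\pi=\pi\circ\sigma$ on each cylinder $[i]$; you obtain upper semicontinuity of $\phi$ by using $f_i(V)\subset V$ (a consequence of $f_i(\pm\infty)=\pm\infty$ and $\Lip(f_{i|V^\pm})<1$, after shrinking $V^\pm$ to metric balls if necessary) to rewrite $\phi$ as a decreasing limit of the finite sums $\phi_N(x)=\sum_{\tau\in I^N}p_\tau\,\mathbf{1}_{\overline{\R}\setminus V}(f_\tau(x))$; and you finish with a maximum principle on the compact space $\overline{\R}$, reducing $\sup\phi>0$ to the existence of a point in $\Lker$. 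This is more informative than the citation and makes the logical dependence on the first part explicit. The only point worth flagging is that the forward invariance $f_i(V)\subset V$ is not stated in the paper's definition of $V$, but it is used elsewhere (e.g.\ the proof of Lemma~\ref{lem:doeblin-fortetinequality general}) and follows as you indicate.
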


\begin{proof}
Since $(f_{i})_{i\in I}$ is expanding without a common fixed point,
for all $x\in\R$ there exists $g_{x}\in G$ such that
$g_{x}(x)\in V$. Hence, $\Lker=\emptyset$. By \cite[Lemma 4.6]{s11random}
we have $\mu_{\vec{p}}\left(\left\{ \omega\in\Sigma\mid x\in J_{\omega}\right\} \right)=0$,
for each $x\in\R$. 
\end{proof}
Recall that $M_{\vec{p}}$ is \emph{almost periodic} if $(M_{\vec{p}}^{n}h)_{n\ge1}$
is relatively compact in $\mathcal{C}(\overline{\R})$, for each $h\in\mathcal{C}(\overline{\R})$.
The dual operator of $M_{\vec{p}}$ is given by $M_{\vec{p}}^{*}:\mathcal{M}_{1}(\overline{\R})\rightarrow\mathcal{M}_{1}(\overline{\R})$,
where $\mathcal{M}_{1}(\overline{\R})$ denotes the space of Borel
probability measures on $\overline{\R}$ endowed with the topology
of weak convergence. We define the compact subset 
\[
J_{\text{meas}}:=\left\{ m\in\mathcal{M}_{1}(\overline{\R})\mid(M_{\vec{p}}^{*})_{n\ge1}^{n}\text{ is not equicontinuous in any neighbourhood of }m\right\} \subset\mathcal{M}_{1}(\overline{\R}).
\]
The following fact is a special case of \cite[Proposition 4.7, Lemma 4.2(6)]{s11random}. 
\begin{prop}
We have that $J_{\text{meas}}=\emptyset$ and that $M_{\vec{p}}:\mathcal{C}(\overline{\R})\rightarrow\mathcal{C}(\overline{\R})$
is almost periodic. 
\end{prop}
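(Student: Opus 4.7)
The plan is to verify both claims by an Arzel\`a--Ascoli argument combined with Proposition~\ref{prop:almost everywhere converges to minimal sets}: for every $x\in\R$, the random orbit $(f_{\omega_{|n}}(x))_n$ enters the contracting neighbourhood $V$ of $\{\pm\infty\}$ in finite time $\mu_{\vec p}$-almost surely.

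For almost periodicity, I would fix $h\in\mathcal{C}(\overline{\R})$. The bound $\|M_{\vec p}^n h\|_\infty\le\|h\|_\infty$ is immediate from $\sum_i p_i=1$, so by Arzel\`a--Ascoli it suffices to show equicontinuity of $\{M_{\vec p}^n h\}_{n\ge1}$ at each $x_0\in\overline{\R}$. If $x_0\in\{\pm\infty\}$, after shrinking $V^{\pm}$ to a sublevel set of $d(\cdot,\pm\infty)$ to ensure forward invariance, the assumption $\Lip(f_{i|V^{\pm}})<1$ yields $\sup_\omega\Lip(f_{\omega_{|n}}|_{V^{\pm}})\le L^n$ with $L:=\max_i\Lip(f_{i|V^{\pm}})<1$. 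Every $f_{\omega_{|n}}$ then drives $V^{\pm}$ uniformly close to $\pm\infty$, and continuity of $h$ at $\pm\infty$ transfers this to equicontinuity of $\{M_{\vec p}^n h\}$.

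If $x_0\in\R$, given $\varepsilon>0$ I would pick $N$ so that $E_N:=\{\omega\in I^N:f_\omega(x_0)\in V\}$ satisfies $\mu_{\vec p}(\{\omega\in\Sigma:\omega_{|N}\in E_N\})>1-\varepsilon/(4\|h\|_\infty+1)$; this is possible since $\{\omega:x_0\notin J_\omega\}=\bigcup_N\{f_{\omega_{|N}}(x_0)\in V\}$ has full $\mu_{\vec p}$-measure by the cited proposition. Since $E_N$ is finite and $V$ is open, a common $\delta>0$ exists such that $|y-x_0|<\delta$ places $f_\omega(y)$ in the same component of $V$ as $f_\omega(x_0)$ for every $\omega\in E_N$. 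For $n\ge N$, I would split $M_{\vec p}^n h(y)-M_{\vec p}^n h(x_0)$ according to whether $\omega_{|N}\in E_N$: on $E_N$ the situation reduces to the previous case (both $f_{\omega_{|n}}(y)$ and $f_{\omega_{|n}}(x_0)$ lie in the same $V^{\pm}$ and converge uniformly to $\pm\infty$ by contraction), while the complement contributes at most $2\|h\|_\infty\cdot\mu_{\vec p}(E_N^c)<\varepsilon/2$. The finitely many $M_{\vec p}^n h$ with $n<N$ are individually continuous at $x_0$, so equicontinuity holds for the full family.

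For $J_{\text{meas}}=\emptyset$ I would argue by duality. Any basic weak-$\ast$ entourage at $m$ has the form $\{(m',m):|\int h_j\,d(m'-m)|<\varepsilon,\ j=1,\dots,k\}$, and $\int h_j\,d(M_{\vec p}^\ast)^n m'=\int M_{\vec p}^n h_j\,dm'$. The almost periodicity just established gives each $\{M_{\vec p}^n h_j\}_n$ compact closure in $\mathcal{C}(\overline{\R})$; choosing finite $\varepsilon$-nets in these closures and taking the common weak-$\ast$ neighbourhood of $m$ defined by the net elements produces, via the triangle inequality, a neighbourhood of $m$ on which the deviations of $(M_{\vec p}^\ast)^n$ are bounded by $3\varepsilon$ uniformly in $n$, so $(M_{\vec p}^\ast)^n$ is equicontinuous at $m$. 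The main technical obstacle is the equicontinuity step in the almost-periodicity argument at points $x_0\in J\cap\R$, where no deterministic iterate moves $x_0$ out of $J$; the argument hinges on quantifying the random escape time through Proposition~\ref{prop:almost everywhere converges to minimal sets}, which is why the contracting-near-infinity hypothesis (with $\Lker=\emptyset$) is essential.
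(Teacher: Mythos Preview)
Your argument is correct. The paper does not actually prove this proposition: it simply states that the result is a special case of \cite[Proposition 4.7, Lemma 4.2(6)]{s11random} and moves on. By contrast, you give a self-contained proof via Arzel\`a--Ascoli, using Proposition~\ref{prop:almost everywhere converges to minimal sets} to control the random escape time into $V$ and the contraction near infinity to handle the tail, and then deduce $J_{\text{meas}}=\emptyset$ from almost periodicity by a standard duality/$\varepsilon$-net argument on $\mathcal{M}_1(\overline{\R})$. The cited reference \cite{s11random} works in the much more general setting of random holomorphic dynamics, where $J_{\text{meas}}=\emptyset$ is obtained first (from the empty kernel Julia set) and almost periodicity is derived as a consequence; your route reverses this logical order, which is perfectly legitimate here since the contracting neighbourhoods $V^{\pm}$ and the finite alphabet make the equicontinuity estimate elementary. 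One small remark: the phrase ``reduces to the previous case'' in your $x_0\in\R$ step is slightly loose --- what you actually use is that for $\omega_{|N}\in E_N$ and $n\ge N$ the contraction on $V^{\pm}$ gives $d(f_{\omega_{|n}}(y),f_{\omega_{|n}}(x_0))\le d(f_{\omega_{|N}}(y),f_{\omega_{|N}}(x_0))$, which is small uniformly over the finite set $E_N$ by continuity, and then uniform continuity of $h$ on the compact space $\overline{\R}$ finishes it. With that clarification the proof is complete.
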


By a well-known result of Ljubich (\cite{MR741393}) on almost periodic
operators, we have 
\[
\mathcal{C}(\overline{\R})=\left\{ h\in\mathcal{C}(\overline{\R})\mid\Vert M_{\vec{p}}^{n}h\Vert_{\infty}\rightarrow0,\,\,\,\text{as }n\rightarrow\infty\right\} \varoplus\overline{\Span\left\{ h\in\mathcal{C}(\overline{\R})\mid\exists\rho\in\mathbb{S}^{1}\mid M_{\vec{p}}h=\rho h\right\} .}
\]

As in \cite{s11random} we define the probability of tending to infinity
\begin{equation}
T_{\vec{p}}:\overline{\R}\rightarrow\left[0,1\right],\quad T_{\vec{p}}(x):=\mu_{\vec{p}}\left\{ \omega\in\Sigma\mid\lim_{n\rightarrow\infty}f_{\omega_{|n}}(x)=\infty\right\} .\label{eq:def prob infinty}
\end{equation}
 It follows from Proposition \ref{prop:almost everywhere converges to minimal 
 sets} and the dominated convergence theorem that for every $h\in 
 \mathcal{C}(\overline{\R})$ that 
\begin{equation}
\lim_{n\rightarrow\infty}M_{\vec{p}}^{n}h(x)=\lim_{n\rightarrow\infty} \int 
h\circ f_{\omega_n}\circ 
\dots \circ 
f_{\omega_1}(x)\,\,d\mu_{\vec{p}}(\omega)=T_{\vec{p}}(x)h(\infty)+
(1-T_{\vec{p}}(x))h(-\infty).\label{eq:Mp
 convergence measurable}
\end{equation}
Hence,
\[
\Span\left\{ h\in\mathcal{C}(\overline{\R})\mid\exists\rho\in\mathbb{S}^{1}\mid M_{\vec{p}}h=\rho h\right\} =\R T_{\vec{p}}\varoplus\R1
\]
and we have by Ljubich's result, 
\[
\Vert M_{\vec{p}}^{n}h-(h(\infty)-h(-\infty))T_{\vec{p}}-h(-\infty)1\Vert_{\infty}\rightarrow0,\quad\text{as }n\rightarrow\infty.
\]

For $\alpha>0$ we say that $h:\overline{\R}\rightarrow\R$ is $\alpha$-H\"older
continuous if 
\[
V_{\alpha}:=\sup_{x\neq y}\left\{ \frac{\left|h(x)-h(y)\right|}{d(x,y)^{\alpha}}\right\} <\infty.
\]
We say that a function is H\"older continuous if it is $\alpha$-H\"older
continuous for some $\alpha>0$. We denote by $\mathcal{C}^{\alpha}(\overline{\R})$
the Banach space of $\alpha$-H\"older continuous maps on $\overline{\R}$
endowed with the $\alpha$-H\"older norm 
\[
\Vert h\Vert_{\alpha}:=V_{\alpha}+\Vert h\Vert_{\infty},\quad h\in\mathcal{C}^{\alpha}(\overline{\R}).
\]
The following lemma is the key to derive the spectral gap property
of $M_{\vec{p}}$ on $\mathcal{C}^{\alpha}(\overline{\R})$. The proof
is inspired by \cite{S13Coop}. For $n\in\N$ and $\omega\in I^{n}$
we will use the notation 
\[
p_{\omega}:=p_{\omega_{1}}\cdot\dots\cdot p_{\omega_{n}}.
\]
Hence, we have for $n\in\N$ and $h\in\mathcal{C}\left(\overline{\R}\right)$
\[
M_{\vec{p}}^{n}h=\sum_{\omega\in I^{n}}p_{\omega}\cdot(h\circ f_{\omega})=\int h\circ f_{\omega_{|n}}\,\,d\mu_{\vec{p}}(\omega).
\]
\begin{lem}
\label{lem:doeblin-fortetinequality general}For every $\vec{p}_{0}\in(0,1)^{s}$
there exist $\delta>0$, $\alpha>0$, $n\in\N$ and constants $0<c<1$
and $C>0$ such that, for every $\vec{p}\in B(\vec{p}_{0},\delta)$
and for every $h\in\mathcal{C}^{\alpha}(\overline{\R})$, 
\[
\left|M_{\vec{p}}^{n}h(x)-M_{\vec{p}}^{n}h(y)\right|\le\left(c\Vert h\Vert_{\alpha}+C\Vert h\Vert_{\infty}\right)d(x,y)^{\alpha},\quad x,y\in\overline{\R}.
\]
\end{lem}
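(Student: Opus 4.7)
The approach combines a uniform exponential-escape estimate---trajectories reach the contracting region $V = V^+ \cup V^-$ quickly---with the $L_0$-contraction of each $f_i$ on $V^\pm$, treating the long-distance case by the trivial bound. After shrinking $V^\pm$ I may assume they are disjoint and forward invariant under each $f_i$, with $d_0 := d(V^+, V^-) > 0$; set $L_0 := \max_i \Lip(f_i|_{V^\pm}) < 1$ and $M := \max_i \Lip(f_i) < \infty$.

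\textbf{Step 1 (uniform escape).} For $x \in \overline{\R}$, let $\tau(\omega, x) := \inf\{k \ge 0 : f_{\omega|k}(x) \in V\}$. Using $\Lker = \emptyset$ (Proposition~\ref{prop:almost everywhere converges to minimal sets}), continuity of the $f_i$, and compactness of $\overline{\R}$, I extract a finite open cover $\{U_j\}$ of $\overline{\R}$ together with words $\omega^{(j)} \in I^*$ satisfying $f_{\omega^{(j)}}(U_j) \subset V$. Setting $m_0 := \max_j |\omega^{(j)}|$ and using that the coordinates of $\vec{p}$ are bounded away from zero on $B(\vec{p}_0,\delta)$ for $\delta$ small, this yields a uniform constant $c_0 > 0$ with $\mu_{\vec{p}}(\{\omega : f_{\omega|m_0}(x) \in V\}) \ge c_0$ for all $\vec{p} \in B(\vec{p}_0,\delta)$ and $x \in \overline{\R}$. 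Iteration via the Markov property and forward invariance of $V$ then gives uniform exponential decay
\[
\mu_{\vec{p}}(\tau(\omega,x) > n) \le C_0 \gamma^n, \qquad \gamma := (1-c_0)^{1/m_0} \in (0,1).
\]

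\textbf{Step 2 (case split at scale $\eta_0 := d_0/M^n$).} If $d(x,y) \ge \eta_0$, the trivial bound gives $|M_{\vec{p}}^n h(x) - M_{\vec{p}}^n h(y)| \le 2\|h\|_\infty \le 2 M^{\alpha n} d_0^{-\alpha}\,\|h\|_\infty\, d(x,y)^\alpha$. If $d(x,y) < \eta_0$, the key geometric observation is that whenever $\tau(\omega,x), \tau(\omega,y) \le n$ both hold, the two orbits must enter the same component of $V$: otherwise at the later entry time $k \le n$ one would have $d_0 \le d(f_{\omega|k}(x), f_{\omega|k}(y)) \le M^k d(x,y) \le M^n d(x,y) < d_0$, a contradiction. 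Writing $\tau_{xy}(\omega)$ for the first time both orbits lie in the same component of $V$, this gives the inclusion $\{\tau_{xy} > n\} \subset \{\tau(\omega,x) > n\} \cup \{\tau(\omega,y) > n\}$, of $\mu_{\vec{p}}$-measure at most $2C_0\gamma^n$ by Step~1. Splitting $\sum_\omega p_\omega d(f_\omega(x), f_\omega(y))^\alpha = S_1 + S_2$ according to $\tau_{xy} \le n$ vs.\ $> n$: on $\{\tau_{xy} \le n\}$, combining $d(f_\omega(x), f_\omega(y)) \le L_0^{n - \tau_{xy}} M^{\tau_{xy}} d(x,y)$ with Step~1 yields $S_1 \le C_1 L_0^{\alpha n} d(x,y)^\alpha$ whenever $\alpha$ is small enough that $(M/L_0)^\alpha \gamma < 1$; on $\{\tau_{xy} > n\}$, the crude Lipschitz bound $d(f_\omega(x), f_\omega(y)) \le M^n d(x,y)$ together with the measure estimate gives $S_2 \le 2C_0 (M^\alpha \gamma)^n d(x,y)^\alpha$. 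Fixing $\alpha > 0$ small and then $n$ large enough that $c := C_1 L_0^{\alpha n} + 2C_0 (M^\alpha \gamma)^n < 1$, the two cases combine to
\[
|M_{\vec{p}}^n h(x) - M_{\vec{p}}^n h(y)| \le (c V_\alpha(h) + C \|h\|_\infty) d(x,y)^\alpha \le (c\|h\|_\alpha + C\|h\|_\infty) d(x,y)^\alpha,
\]
with $C := 2 M^{\alpha n} d_0^{-\alpha}$, as required.

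\textbf{Main obstacle.} The geometric observation at the heart of Step~2---that for $d(x,y) < \eta_0$ both orbits, if they enter $V$ within $n$ steps, must enter the same component---is what avoids a circular dependence: without it one would need \emph{a priori} H\"older control on the crossing probability $T_{\vec{p}}(y) - T_{\vec{p}}(x)$, which is a consequence of, rather than an ingredient in, the spectral gap. The uniform separation $d_0 > 0$ of $V^\pm$, together with the scale $\eta_0 = d_0/M^n$, neatly bypasses this circularity.
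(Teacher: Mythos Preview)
Your proof is correct and follows essentially the same strategy as the paper's: a uniform exponential-escape estimate to the contracting region $V$, a case split by distance, and the $L_0$-contraction inside $V$ to drive the H\"older constant below $1$. The paper organises the escape via an explicit filtration $A(j),B(j)\subset\Sigma$ in blocks of fixed length $r$ and uses a Lebesgue-number argument on a finite connected cover to ensure both orbits land in the same component of $V$, while you achieve the same end via the hitting time $\tau_{xy}$ and the separation $d_0=d(V^+,V^-)$ combined with the global Lipschitz bound $M$; these are equivalent pieces of bookkeeping rather than different ideas.
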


\begin{proof}
Recall that $V=V^{+}\cup V^{-}$. Since $(f_{i})_{i\in I}$ is expanding
without a common fixed point, we have that, for all $x\in\R$ there
exists $g_{x}\in G$ and a compact neighborhood $U_{x}$ of $x$ in
$\overline{\R}$ such that $g_{x}(U_{x})\subset V$. Since $\overline{\R}$
is compact, there exist $t\in\N$ and $x_{1},\dots,x_{t}\in\overline{\R}$
such that $\overline{\R}=\bigcup_{j=1}^{t}\Int(U_{x_{j}})$, where
$\Int(A)$ denotes the interior of a set $A\subset\overline{\R}$.
Since $\bigcup_{g\in G}g(V)\subset V$ we may assume that there exists
$r\in\N$ such that, for each $j=1,\dots,t$ there exists $\beta^{j}\in I^{r}$
with $g_{x_{j}}=f_{\beta^{j}}$. For $\omega\in I^{*}$ with $|\omega|=\ell$
we denote by $\left[\omega\right]:=\left\{ \tau\in\Sigma\mid\tau_{1}=\omega_{1},\dots,\tau_{\ell}=\omega_{\ell}\right\} $
the cylinder set of $\omega$. Let 
\[
a:=a(\vec{p}):=\max\left\{ 1-\mu_{\vec{p}}([\beta^{j}])\mid1\le j\le t\right\} <1.
\]
Recall that $\Lip(g)<\infty$, for every $g\in G$. Let 
\[
\Lambda:=2\cdot\max\left\{ \max\left\{ \Lip\left(f_{\omega}\right)\mid\omega\in 
I^{r}\right\} ,1\right\} \ge2.
\]
Let $R>0$ be a Lebesgue number of the covering $\left(\Int(U_{x_{j}})\right)_{1\le j\le t}$
of $\overline{\R}.$ Let $\alpha>0$ such that 
\[
\eta:=a\Lambda^{\alpha}<1.
\]
Let $n\in\N$ to be determined later. Let $x,y\in\overline{\R}$.
Since $M_{\vec{p}}^n$ has norm one, we may assume that $d(x,y)<R$. Let
\[
n(x,y):=\max\left\{ k\ge0\mid\Lambda^{-k}R>d(x,y)\right\} .
\]
If $n(x,y)<n$, then $d(x,y)\ge\Lambda^{-n}R$ and the desired estimate
follows with $C:=2\Lambda^{n\alpha}R^{-\alpha}$. Now, we consider
the case $n(x,y)\ge n$. Then we have $d(x,y)<\Lambda^{-n}R$. Consequently,
for $j\le n$ and $\omega\in I^{jr}$ we have $d(f_{\omega}(x),f_{\omega}(y))\le R$.
By the definition of $R$ there exists $i_{0}\in\left\{ 1,\dots,t\right\} $
such that $B(x,R)\subset U_{x_{i_{0}}}$. Let $A(0):=[\beta^{i_{0}}]\subset\Sigma$
and $B(0):=\Sigma\setminus[\beta^{i_{0}}]$. We define inductively,
for $j\ge1$, 
\[
A(j):=\left\{ \omega\in B(j-1)\mid\exists i\in\left\{ 1,\dots,t\right\} \,\,\text{such that }B(f_{\omega_{|rj}}(x),R)\subset U_{x_{i}}\text{ and }\left(\omega_{rj+1},\dots\omega_{r(j+1)}\right)=\beta^{i}\right\} 
\]
and $B(j):=B(j-1)\setminus A(j).$ We have 
\[
\left|M_{\vec{p}}^{rn}h(x)-M_{\vec{p}}^{rn}h(y)\right|\le\left|\sum_{j=0}^{n-1}\int_{A(j)}h(f_{\omega_{|rn}}(x))-h(f_{\omega_{|rn}}(y))d\mu_{\vec{p}}\left(\omega\right)\right|+\left|\int_{B(n-1)}h(f_{\omega_{|rn}}(x))-h(f_{\omega_{|rn}}(y))d\mu_{\vec{p}}\left(\omega\right)\right|.
\]
Since $\mu_{\vec{p}}(A(j))\le a^{j}$ for every $j\le n-1$, and $B(f_{\omega_{|r(j+1)}}(x),R)\subset V$,
for every $\omega\in A(j)$, we can estimate with 
\[
S:=\max\left\{ \Lip(f_{i|V^{+}}),\Lip(f_{i|V^{-}})\mid i\in I\right\} <1\quad\text{and }\tilde{c}:=\max\left\{ \eta,S^{r\alpha}\right\} <1,
\]
\begin{align*}
\int_{A(j)}\left|h(f_{\omega_{|rn}}(x))-h(f_{\omega_{|rn}}(y))\right|d\mu_{\vec{p}}\left(\omega\right) & \le a^{j}\sup_{\omega\in A(j)}\left|h(f_{\omega_{|rn}}(x))-h(f_{\omega_{|rn}}(y))\right|\\
 & \le a^{j}\Vert h_{|V}\Vert_{\alpha}\sup_{\omega\in A(j)}\left\{ \left(S^{(n-j-1)r}d\left(f_{\omega_{|r(j+1)}}(x),f_{\omega_{|r(j+1)}}(y)\right)\right)^{\alpha}\right\} \\
 & \le a^{j}S^{(n-j-1)r\alpha}\Lambda^{(j+1)\alpha}\Vert 
 h_{|V}\Vert_{\alpha}d(x,y)^{\alpha}\\
 & = \eta^{j}(S^{r\alpha})^{n-j-1}\Lambda^{\alpha}\Vert 
 h_{|V}\Vert_{\alpha}d(x,y)^{\alpha} \le \tilde{c}^{n-1}\Lambda^{\alpha}\Vert 
 h_{|V}\Vert_{\alpha}d(x,y)^{\alpha}.
\end{align*}
Finally, we verify that 
\begin{align*}
\int_{B(n-1)}\left|h(f_{\omega_{|rn}}(x))-h(f_{\omega_{|rn}}(y))\right|d\mu_{\vec{p}}\left(\omega\right) & \le\mu\left(B(n-1)\right)\sup_{\omega\in B(n-1)}\left|h\circ f_{\omega_{|rn}}(x)-h\circ f_{\omega_{|rn}}(y)\right|\\
 & \le a^{n}\Vert h\Vert_{\alpha}\sup_{\omega\in B(n-1)}d\left(f_{\omega_{|rn}}(x),f_{\omega_{|rn}}(y)\right)^{\alpha}\\
 & \le a^{n}\Vert 
 h\Vert_{\alpha}\Lambda^{n\alpha}d(x,y)^{\alpha}\le\eta^{n}\Vert 
 h\Vert_{\alpha}d(x,y)^{\alpha}.
\end{align*}
We have thus shown that 
\[
\left|M_{\vec{p}}^{rn}h(x)-M_{\vec{p}}^{rn}h(y)\right|\le\left(n\tilde{c}^{n-1}
\Lambda^{\alpha}\Vert h_{|V}\Vert_{\alpha}+\eta^{n}\Vert 
h\Vert_{\alpha}\right)d(x,y)^{\alpha}.
\]
For $n$ sufficiently large, the assertion of the lemma follows. It
is clear that $a=a(\vec{p})$ and thus, $\alpha$ and the other constants
involved, depend continuously on $\vec{p}$. Therefore, the assertion
of the lemma holds with locally uniform constants. The proof is complete. 
\end{proof}
\begin{rem*}
	It follows from the proof that result of the previous lemma holds for every 
	$\alpha<-\log(a)/\log(\Lambda)$. 
\end{rem*}
\begin{thm}
\label{thm:spectralgap}For every $\vec{p}_{0}\in(0,1)^{s}$ there
exists $\delta>0$ and $\alpha>0$ such that $M_{\vec{p}}:\mathcal{C}^{\alpha}(\overline{\R})\rightarrow\mathcal{C}^{\alpha}(\overline{\R})$
has the spectral gap property for every $\vec{p}\in B(\vec{p}_{0},\delta)$.
In particular, for every $\vec{p}\in B(\vec{p}_{0},\delta)$ we have
$T_{\vec{p}}\in\mathcal{C}^{\alpha}(\overline{\R})$ and the convergence
\[
\Vert M_{\vec{p}}^{n}h-(h(\infty)-h(-\infty))T_{\vec{p}}-h(-\infty)\1\Vert_{\alpha}\rightarrow0,\quad\text{as }n\rightarrow\infty,
\]
is exponentially fast. Moreover, the map $\vec{p}\mapsto T_{\vec{p}}\in\mathcal{C}^{\alpha}(\overline{\R})$
is real-analytic on $B(\vec{p}_{0},\delta)$.
\end{thm}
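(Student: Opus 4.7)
The plan is to combine Lemma~\ref{lem:doeblin-fortetinequality general} with the Ionescu-Tulcea--Marinescu / Hennion quasi-compactness mechanism, then use the Ljubich-based identification of the peripheral part already established earlier in this section, and finally appeal to Kato's analytic perturbation theory for the dependence on $\vec{p}$.

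\emph{Step 1: Quasi-compactness.} Fix $\vec{p}_0 \in (0,1)^s$ and let $\delta, \alpha, n, c, C$ be the uniform constants provided by Lemma~\ref{lem:doeblin-fortetinequality general} on $B(\vec{p}_0,\delta)$. Because $M_{\vec{p}}$ preserves $\|\cdot\|_\infty$, the Doeblin--Fortet inequality in the lemma upgrades to
\[
\|M_{\vec{p}}^n h\|_\alpha \le c\,\|h\|_\alpha + (C+1)\,\|h\|_\infty, \quad h \in \mathcal{C}^\alpha(\overline{\R}).
\]
Since $\overline{\R}$ is compact, the inclusion $\mathcal{C}^\alpha(\overline{\R}) \hookrightarrow \mathcal{C}(\overline{\R})$ is compact by Arzel\`a--Ascoli. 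Hennion's theorem then yields quasi-compactness of $M_{\vec{p}}$ on $\mathcal{C}^\alpha(\overline{\R})$ with essential spectral radius $\le c^{1/n} < 1$; the rest of the spectrum in the closed unit disk consists of finitely many eigenvalues of finite multiplicity, and all constants are uniform for $\vec{p} \in B(\vec{p}_0,\delta)$.

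\emph{Step 2: Peripheral spectrum and exponential convergence.} Any modulus-one eigenvalue of $M_{\vec{p}}$ on $\mathcal{C}^\alpha(\overline{\R})$ is also one on $\mathcal{C}(\overline{\R})$ (by inclusion), so Ljubich's decomposition recalled above forces the unique peripheral eigenvalue to be $1$ with eigenspace exactly $\R T_{\vec{p}} \oplus \R \1$. In particular the quasi-compact decomposition produces $T_{\vec{p}}$ as a Hölder eigenfunction, giving $T_{\vec{p}} \in \mathcal{C}^\alpha(\overline{\R})$. Writing $M_{\vec{p}} = P_{\vec{p}} + N_{\vec{p}}$ with $P_{\vec{p}}$ the rank-two spectral projection onto this eigenspace and $N_{\vec{p}}$ of spectral radius $<1$, the identification $P_{\vec{p}} h = (h(\infty)-h(-\infty))T_{\vec{p}} + h(-\infty)\1$ (forced by the sup-norm limit \eqref{eq:Mp convergence measurable}) yields the claimed exponential convergence in $\|\cdot\|_\alpha$.

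\emph{Step 3: Real-analytic dependence.} The map $\vec{p} \mapsto M_{\vec{p}}$ is polynomial (in particular real-analytic) from $B(\vec{p}_0,\delta) \subset \R^s$ into the Banach space $\mathcal{L}(\mathcal{C}^\alpha(\overline{\R}))$, since $M_{\vec{p}} h = \sum_{i=1}^{s} p_i(h\circ f_i - h\circ f_{s+1}) + h\circ f_{s+1}$. Because the spectral gap isolates $\{1\}$ uniformly on $B(\vec{p}_0,\delta)$ (Step~1), Kato's analytic perturbation theory gives that the spectral projection $P_{\vec{p}}$ onto the peripheral eigenspace, defined by a contour integral over a fixed small circle around $1$, is real-analytic in $\vec{p}$. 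Choosing any fixed $h_0 \in \mathcal{C}^\alpha(\overline{\R})$ with $h_0(\infty)=1$ and $h_0(-\infty)=0$, the formula from Step~2 gives $P_{\vec{p}} h_0 = T_{\vec{p}}$, exhibiting $\vec{p} \mapsto T_{\vec{p}} \in \mathcal{C}^\alpha(\overline{\R})$ as real-analytic.

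The only genuine work is in Step~1, which is already contained in the preceding lemma; the main conceptual point to verify carefully is that the peripheral eigenspace inside $\mathcal{C}^\alpha$ is not smaller than the one produced by Ljubich's theorem on $\mathcal{C}(\overline{\R})$, but this follows because quasi-compactness forces the peripheral eigenfunctions to be $\alpha$-Hölder automatically. Everything else is a routine application of Hennion's theorem and Kato's perturbation theory, with the uniformity in $\vec{p}$ coming from the locally uniform constants in Lemma~\ref{lem:doeblin-fortetinequality general}.
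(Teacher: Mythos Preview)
Your proposal is correct and follows essentially the same route as the paper's proof: derive the Ionescu-Tulcea--Marinescu inequality from Lemma~\ref{lem:doeblin-fortetinequality general}, invoke the ITM/Hennion quasi-compactness theorem, identify the peripheral part via the Ljubich decomposition already established, and obtain analytic dependence from Kato's perturbation theory. The paper's own proof is a two-line sketch citing \cite{MR0037469} and \cite{katoperturbationMR0407617}; you have simply unpacked the details (in particular the identification $P_{\vec{p}}h_0=T_{\vec{p}}$ and the affine dependence of $M_{\vec{p}}$ on $\vec{p}$) that the paper leaves implicit.
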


\begin{proof}
By Lemma \ref{lem:doeblin-fortetinequality general} there exist $\delta>0$,
$\alpha>0$,  $n\in\N$, $0<c<1$ and $C>0$  such that, for every $\vec{p}\in 
B(\vec{p}_{0},\delta)$ and for every $h\in \mathcal{C}^{\alpha 
}(\overline{\Bbb{R}})$,
we have the Ionescu-Tulcea and Marinescu inequality 
\[
\Vert M_{\vec{p}}^{n}h\Vert_{\alpha}\le c\Vert h\Vert_{\alpha}+(C+1)\Vert h\Vert_{\infty}.
\]
Therefore, the theorem follows from the well-known result of \cite{MR0037469}
in tandem with the perturbation theory for linear operators (\cite{katoperturbationMR0407617}). 
\end{proof}
\begin{rem*}
	We remark that a result similar to Theorem \ref{thm:spectralgap} has been 
	obtained in \cite[Theorem 
	3.30]{S13Coop} in the framework of random complex dynamical systems. In 
	this 
	paper, we deal with random real one-dimensional dynamical systems. 
	Regarding the 
	proof of Theorem  \ref{thm:spectralgap}, we obtain a simple and 
	straightforward proof by using the Ionescu-Tulcea and 
	Marinescu inequality. 
\end{rem*}
\begin{cor}
For every $\vec{p}_{0}\in(0,1)^{s}$ there exists $\delta>0$ and
$\alpha>0$ such that $\T_{\vec{p}}\subset\mathcal{C}^{\alpha}(\overline{\R})$
for every $\vec{p}\in B(\vec{p}_{0},\delta)$. 
\end{cor}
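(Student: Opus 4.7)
The plan is to deduce this corollary directly from the last statement of Theorem \ref{thm:spectralgap}. First I would apply Theorem \ref{thm:spectralgap} to obtain $\delta>0$ and $\alpha>0$ such that the map $F:B(\vec{p}_0,\delta)\to\mathcal{C}^{\alpha}(\overline{\R})$, $\vec{p}\mapsto T_{\vec{p}}$, is real-analytic as a Banach-space-valued function on an open subset of $\R^s$. By the standard theory of real-analytic maps into a Banach space, this yields iterated Fr\'{e}chet partial derivatives $\partial^{n_1+\cdots+n_s}F/\partial u_1^{n_1}\cdots\partial u_s^{n_s}(\vec{p})$ existing as elements of $\mathcal{C}^{\alpha}(\overline{\R})$ for every $\vec{n}=(n_1,\dots,n_s)\in\N_0^s$ and every $\vec{p}\in B(\vec{p}_0,\delta)$.

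The one step that needs a brief justification is that these Banach-valued derivatives, evaluated pointwise at any $x\in\overline{\R}$, coincide with the scalar-valued functions $C_{\vec{n},\vec{p}}$ of Definition \ref{intro def:generalised Takagi}. For this I would observe that the point-evaluation functional $\mathrm{ev}_x:\mathcal{C}^{\alpha}(\overline{\R})\to\R$, $h\mapsto h(x)$, is continuous and linear, since $|h(x)|\le\Vert h\Vert_{\infty}\le\Vert h\Vert_{\alpha}$. Because continuous linear maps commute with Fr\'{e}chet differentiation, applying $\mathrm{ev}_x$ to the Banach-valued derivative reproduces the scalar partial derivative of $\vec{u}\mapsto T_{\vec{u}}(x)=\mathrm{ev}_x(F(\vec{u}))$ at $\vec{u}=\vec{p}$, which by definition equals $C_{\vec{n},\vec{p}}(x)$. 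Hence $C_{\vec{n},\vec{p}}\in\mathcal{C}^{\alpha}(\overline{\R})$ for every $\vec{n}\in\N_0^s$.

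Finally, since $\T_{\vec{p}}$ is by definition the $\R$-linear span of the family $(C_{\vec{n},\vec{p}})_{\vec{n}\in\N_0^s}$ and $\mathcal{C}^{\alpha}(\overline{\R})$ is a vector space, this yields $\T_{\vec{p}}\subset\mathcal{C}^{\alpha}(\overline{\R})$ for every $\vec{p}\in B(\vec{p}_0,\delta)$. No substantive obstacle arises: the whole argument reduces to Theorem \ref{thm:spectralgap} together with the standard interplay between Banach-valued real-analyticity and pointwise scalar partial differentiation via continuous point-evaluation.
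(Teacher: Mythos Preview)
Your proposal is correct and is precisely the argument the paper has in mind: the corollary is stated in the paper without proof, as an immediate consequence of the real-analyticity assertion in Theorem~\ref{thm:spectralgap}. Your careful justification via Fr\'echet differentiation in $\mathcal{C}^{\alpha}(\overline{\R})$ and continuity of point-evaluation simply spells out what the paper leaves implicit.
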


The next lemma can be proved exactly as in \cite[Lemma 4.1]{JS2016}.
\begin{lem}
For every $n\in\N_{0}^{s}$ we have 
\[
C_{\vec{n}}=M_{\vec{p}}C_{\vec{n}}+\sum_{i=1}^{s}n_{i}\left(C_{\vec{n}-\vec{e}_{i}}\circ f_{i}-C_{\vec{n}-\vec{e}_{i}}\circ f_{s+1}\right).
\]
\end{lem}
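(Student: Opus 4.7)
The plan is to derive the identity by differentiating the fixed point equation $T_{\vec{u}} = M_{\vec{u}} T_{\vec{u}}$ with respect to the probability parameters and applying the Leibniz rule. The key input is Theorem \ref{thm:spectralgap}, which guarantees real-analyticity of $\vec{u} \mapsto T_{\vec{u}}$ in an appropriate H\"older space, so that the partial derivatives defining $C_{\vec{n}}$ exist and differentiation can be exchanged with the finite sum $M_{\vec{u}}$.

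First I would start from the identity
\[
T_{\vec{u}}(x) \;=\; \sum_{i=1}^{s} u_{i}\, T_{\vec{u}}(f_{i}(x)) + \Bigl(1 - \sum_{j=1}^{s} u_{j}\Bigr) T_{\vec{u}}(f_{s+1}(x)),
\]
which holds in a neighborhood of $\vec{p}$ because $T_{\vec{u}}$ is the fixed point of $M_{\vec{u}}$ singled out by Ljubich's decomposition. Next I would apply $\partial^{|\vec{n}|}/(\partial u_{1}^{n_{1}}\cdots\partial u_{s}^{n_{s}})$ to each summand. For $1 \le i \le s$, the Leibniz rule applied to the product $u_{i}\, T_{\vec{u}}(f_{i}(x))$ gives only two nonzero contributions, since $u_{i}$ itself has nonzero partial derivatives only of order $0$ (giving $u_{i}$) and of order $1$ in the $i$-th variable (giving $1$); evaluating at $\vec{u}=\vec{p}$ produces $p_{i}\,C_{\vec{n}}(f_{i}(x)) + n_{i}\,C_{\vec{n}-\vec{e}_{i}}(f_{i}(x))$. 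For the last summand, the analogous calculation applied to $(1 - \sum_{j=1}^{s} u_{j})\, T_{\vec{u}}(f_{s+1}(x))$ yields $p_{s+1}\,C_{\vec{n}}(f_{s+1}(x)) - \sum_{j=1}^{s} n_{j}\,C_{\vec{n}-\vec{e}_{j}}(f_{s+1}(x))$, because the only nonzero derivatives of $1-\sum u_{j}$ are the value itself and the first partials, which equal $-1$.

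Summing the contributions, the $C_{\vec{n}}$-terms combine to $\sum_{i=1}^{s+1} p_{i}\, C_{\vec{n}}\circ f_{i} = M_{\vec{p}} C_{\vec{n}}$, while the first-order corrections combine to $\sum_{i=1}^{s} n_{i}\bigl(C_{\vec{n}-\vec{e}_{i}}\circ f_{i} - C_{\vec{n}-\vec{e}_{i}}\circ f_{s+1}\bigr)$, yielding exactly the asserted identity. I would note that the proof is essentially a reproduction of \cite[Lemma 4.1]{JS2016} with the finite sum $\sum_{i\in I}$ replacing the series there; the only step requiring care is the justification of the termwise differentiation, which in our setting is immediate from the finiteness of $I$ combined with the real-analyticity of $\vec{p} \mapsto T_{\vec{p}} \in \mathcal{C}^{\alpha}(\overline{\R})$ supplied by Theorem \ref{thm:spectralgap}. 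No serious obstacle is expected; the calculation is purely algebraic bookkeeping of the Leibniz rule, with the mild subtlety that the two first-order correction terms from the $i\le s$ and $i=s+1$ summands must be grouped as differences to produce the clean telescoped form in the statement.
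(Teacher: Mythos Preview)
Your proposal is correct and is exactly the approach the paper takes: the paper states that the lemma ``can be proved exactly as in \cite[Lemma 4.1]{JS2016}'', and your argument---differentiating the fixed-point identity $T_{\vec{u}}=M_{\vec{u}}T_{\vec{u}}$ via the Leibniz rule and invoking the real-analyticity from Theorem~\ref{thm:spectralgap}---is precisely that proof.
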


The Bernoulli measure $\mu_{\vec{p}}$ on $\Sigma$ defines the probability
measure $\tilde{\mu}_{\vec{p}}=\mu_{\vec{p}}\circ\pi^{-1}$ on $J$
with distribution function 
\[
F_{\vec{p}}:\overline{\R}\rightarrow[0,1],\quad F_{\vec{p}}\left(x\right)=\tilde{\mu}_{\vec{p}}\left\{ (-\infty,x]\right\} .
\]
\begin{lem}
\label{lem:.Fp is fixed point}We have $M_{\vec{p}}F_{\vec{p}}=F_{\vec{p}}$.
\end{lem}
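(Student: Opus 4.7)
The plan is to exploit the self-similarity of the Bernoulli measure $\mu_{\vec{p}}$ under the natural one-to-one maps $\tau_i:\Sigma\to[i]$ given by $\tau_i(\omega)=(i,\omega_1,\omega_2,\dots)$, and transport this self-similarity through the coding map $\pi$. The fundamental identity I would establish first is
\[
\pi\circ\tau_i = f_i^{-1}\circ \pi,\qquad i\in I.
\]
This follows directly from the definition of $\pi$ and the composition convention $f_{(\omega_1,\dots,\omega_n)}=f_{\omega_n}\circ\cdots\circ f_{\omega_1}$: for $\omega\in\Sigma$ and every $n\in\N$ one has $f_{(\tau_i\omega)_{|n+1}}=f_{\omega_{|n}}\circ f_i$, hence $(f_{(\tau_i\omega)_{|n+1}})^{-1}(\overline{\R}\setminus V)=f_i^{-1}\bigl((f_{\omega_{|n}})^{-1}(\overline{\R}\setminus V)\bigr)$, and intersecting over $n$ gives the identity.

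Next, since $\mu_{\vec{p}}$ is Bernoulli, its restriction to the cylinder $[i]$ equals $p_i\cdot(\tau_i)_*\mu_{\vec{p}}$, so
\[
\mu_{\vec{p}} = \sum_{i\in I} p_i\,(\tau_i)_*\mu_{\vec{p}}.
\]
Pushing forward by $\pi$ and using the identity from the first step yields
\[
\tilde{\mu}_{\vec{p}} = \pi_*\mu_{\vec{p}} = \sum_{i\in I} p_i\,\pi_*(\tau_i)_*\mu_{\vec{p}} = \sum_{i\in I} p_i\,(f_i^{-1})_*\tilde{\mu}_{\vec{p}}.
\]

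Finally, I evaluate both sides on half-lines $(-\infty,x]$. Because $f_i'\ge\lambda>1>0$, each $f_i$ is strictly increasing, so $f_i\bigl((-\infty,x]\bigr)=(-\infty,f_i(x)]$, and therefore
\[
F_{\vec{p}}(x) = \tilde{\mu}_{\vec{p}}((-\infty,x]) = \sum_{i\in I} p_i\,\tilde{\mu}_{\vec{p}}\bigl(f_i((-\infty,x])\bigr) = \sum_{i\in I} p_i\,F_{\vec{p}}(f_i(x)) = (M_{\vec{p}}F_{\vec{p}})(x).
\]
The endpoints $\pm\infty$ are handled trivially since $f_i(\pm\infty)=\pm\infty$ and $F_{\vec{p}}(\pm\infty)\in\{0,1\}$ are fixed by $M_{\vec{p}}$. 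There is no real obstacle here; the only point requiring minimal care is the bookkeeping of the reversed composition convention in the definition of $\pi$, which is what forces the appearance of $f_i^{-1}$ (not $f_i$) in the self-similarity relation for $\tilde{\mu}_{\vec{p}}$ and makes the identity work out in the correct direction.
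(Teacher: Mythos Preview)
Your proof is correct and follows essentially the same route as the paper's own argument: the paper also uses the self-similarity $\mu_{\vec{p}}=\sum_{i}p_i\,\mu_{\vec{p}}\circ\sigma_i^{-1}$ (with $\sigma_i$ denoting your $\tau_i$), the intertwining identity $f_i^{-1}\circ\pi=\pi\circ\sigma_i$, and then pushes forward and evaluates on $(-\infty,x]$. Your write-up is slightly more explicit about why the intertwining identity holds and why $f_i((-\infty,x])=(-\infty,f_i(x)]$, but the structure of the argument is identical.
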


\begin{proof}
Clearly, $\mu_{\vec{p}}$ is a self-similar measure on $\Sigma$,
i.e., 
\[
\mu_{\vec{p}}=\sum_{i=1}^{s+1}p_{i}\mu_{\vec{p}}\circ\sigma_{i}^{-1},
\]
where $\sigma_{i}:\Sigma\rightarrow\Sigma$ is given by $\sigma_{i}(\omega):=i\omega$.
Using that $f_{i}^{-1}\circ\pi=\pi\circ\sigma_{i}$, we obtain for
every Borel set $B\subset\R$, 
\begin{align*}
\tilde{\mu}_{\vec{p}}\left(B\right) & =\mu_{\vec{p}}\left(\pi^{-1}(B)\right)=\sum_{i=1}^{s+1}p_{i}\mu_{\vec{p}}\circ\sigma_{i}^{-1}\left(\pi^{-1}(B)\right)\\
 & =\sum_{i=1}^{s+1}p_{i}\mu_{\vec{p}}\circ\pi^{-1}\left(f_{i}^{-1}\right)^{-1}(B)=\sum_{i=1}^{s+1}p_{i}\tilde{\mu}_{\vec{p}}\left(f_{i}(B)\cap J\right).
\end{align*}
Setting $B:=(-\infty,x]$ the lemma follows.
\end{proof}

Note that $M_{\vec{p}}$ can be defined for every Borel measurable function. 
\begin{lem}
\label{lem:unqiueness fixedpoint}$T_{\vec{p}}$ is the unique bounded
Borel measurable function such that $M_{\vec{p}}T_{\vec{p}}=T_{\vec{p}}$
and $T_{\vec{p}|V^{-}}=0$ and $T_{\vec{p}|V^{+}}=1$.  In particular,
$T_{\vec{p}}=F_{\vec{p}}$.
\end{lem}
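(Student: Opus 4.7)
My plan is to prove this in two steps. First I would establish uniqueness of a bounded Borel measurable fixed point of $M_{\vec{p}}$ with the prescribed values at infinity; then I would check that the distribution function $F_{\vec{p}}$ belongs to this class and conclude that $F_{\vec{p}}=T_{\vec{p}}$.

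For the uniqueness, let $h:\overline{\R}\to\R$ be bounded Borel measurable with $M_{\vec{p}}h=h$, $h|_{V^{-}}=0$, $h|_{V^{+}}=1$. Iterating gives $h=M_{\vec{p}}^{n}h$, so $h(x)=\int h(f_{\omega_{|n}}(x))\,d\mu_{\vec{p}}(\omega)$ for every $x\in\overline{\R}$ and every $n\in\N$. To pass to the limit I would combine Proposition~\ref{prop:almost everywhere converges to minimal sets} with the contracting-near-infinity hypothesis: the former shows that, for $\mu_{\vec{p}}$-a.e.\ $\omega$, we have $x\notin J_{\omega}$, so $f_{\omega_{|n}}(x)$ eventually enters $V=V^{+}\cup V^{-}$; once inside, it remains there and tends to $\pm\infty$. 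Splitting $\Sigma$ into the two escape events $E^{\pm}$, which have measures $T_{\vec{p}}(x)$ and $1-T_{\vec{p}}(x)$ by definition, the boundary conditions force $h(f_{\omega_{|n}}(x))$ to stabilize at $1$ on $E^{+}$ and $0$ on $E^{-}$. Since $h$ is bounded, dominated convergence then yields $h(x)=T_{\vec{p}}(x)$.

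To identify $F_{\vec{p}}$ with $T_{\vec{p}}$, I would observe that $F_{\vec{p}}$ is bounded and monotone, hence Borel measurable, and $M_{\vec{p}}F_{\vec{p}}=F_{\vec{p}}$ by Lemma~\ref{lem:.Fp is fixed point}. Only the boundary values remain to be verified. Since $\tilde{\mu}_{\vec{p}}$ is supported on $J$, it is enough to show $V^{-}\subset(-\infty,\inf J)$ and $V^{+}\subset(\sup J,+\infty]$. For this I would use that each $f_{i}$ is an increasing $\mathcal{C}^{1+\epsilon}$-diffeomorphism of $\R$ (as $f_{i}'\ge\lambda>1$), so it admits a unique repelling fixed point $p_{i}$. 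Applying the defining property of $V^{-}$ to the constant sequence $\omega=(i,i,\dots)$ gives $f_{i}^{n}(x)\to-\infty$ for every $x\in V^{-}$, forcing $x<p_{i}$ for every $i\in I$. Since $\pi(i,i,\dots)=p_{i}\in J$, this yields $V^{-}\subset(-\infty,\min_{i}p_{i})\subset(-\infty,\inf J)$, so $F_{\vec{p}}(x)=\tilde{\mu}_{\vec{p}}((-\infty,x])=0$; the argument for $V^{+}$ is symmetric. The uniqueness step then gives $F_{\vec{p}}=T_{\vec{p}}$.

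The only potentially delicate point is localizing $V^{\pm}$ outside the convex hull of $J$, which depends on the monotonicity of the $f_{i}$ and on the fact that their fixed points lie in $J$. The remaining work is a direct dominated-convergence computation in the spirit of the derivation of \eqref{eq:Mp convergence measurable}, with the prescribed constant boundary values of $h$ substituting for the continuity of $h$ at $\pm\infty$ that was used there.
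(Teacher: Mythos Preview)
Your approach is correct and essentially the same as the paper's: show that any bounded Borel fixed point $h$ with the prescribed boundary values must equal $T_{\vec p}$ by iterating $h=M_{\vec p}^{n}h$ and passing to the limit via dominated convergence and Proposition~\ref{prop:almost everywhere converges to minimal sets} (this is exactly how \eqref{eq:Mp convergence measurable} is obtained, and the paper simply remarks that it extends to bounded measurable $h$ continuous at $\pm\infty$), then apply this to $F_{\vec p}$. The paper asserts $F_{\vec p|V^{\pm}}\in\{0,1\}$ without justification, so your verification adds detail; note however that the inclusion $(-\infty,\min_i p_i)\subset(-\infty,\inf J)$ does not follow from $p_i\in J$ alone (that gives only $\inf J\le\min_i p_i$), so you still need the easy extra step $\inf J=\min_i p_i$, which follows from $J=\bigcup_i f_i^{-1}(J)$ and the monotonicity of the $f_i$, or alternatively argue directly that $x\in V^{-}$ and $y=\pi(\omega)\in J$ force $x<y$ by comparing $f_{\omega_{|n}}(x)\to-\infty$ with $f_{\omega_{|n}}(y)\in\overline{\R}\setminus V$.
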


\begin{proof}
By \eqref{eq:Mp
	convergence measurable} we have that $M_{\vec{p}}T_{\vec{p}}=T_{\vec{p}}$.
Clearly, $T_{\vec{p}}$ is bounded and measurable, and satisfies $T_{\vec{p}|V^{-}}=0$
and $T_{\vec{p}|V^{+}}=1$. Let $h$ be another bounded Borel measurable
function such that $M_{\vec{p}}h=h$ and $h_{|V^{-}}=0$ and $h_{|V^{+}}=1$.
Note that  (\ref{eq:Mp convergence measurable}) in fact holds for every bounded 
Borel measurable function which is continuous at $\{\pm \infty \}$. Therefore, 
we have 
\[
h(x)=\lim_{n\rightarrow\infty}M_{\vec{p}}^{n}h(x)=h(\infty)T_{\vec{p}}(x)+h(-\infty)(1-T_{\vec{p}}(x))=T_{\vec{p}}(x),
\]
which proves the asserted uniqueness.  To prove that $T_{\vec{p}}=F_{\vec{p}}$
we note that $F_{\vec{p}}$ is bounded and Borel measurable, $M_{\vec{p}}F_{\vec{p}}=F_{\vec{p}}$,
$F_{\vec{p}|V^{-}}=0$ and $F_{\vec{p}|V^{+}}=1$. The assertion of
the lemma follows. 
\end{proof}
The following fact follows immediately from the definition of $F_{\vec{p}}$. 
\begin{fact}
\label{fact:variespricisely}$F_{\vec{p}}$ (and hence, $T_{\vec{p}}$)
is locally constant precisely on $\R\setminus J$.
\end{fact}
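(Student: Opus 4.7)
The plan is to exploit the fact that $F_{\vec{p}}$ is, by definition, the cumulative distribution function of the Borel probability measure $\tilde{\mu}_{\vec{p}} = \mu_{\vec{p}} \circ \pi^{-1}$, whose topological support is contained in $J = \pi(\Sigma)$. Since $J$ is compact (being the image of the compact space $\Sigma$ under the continuous map $\pi$), it is in particular closed in $\overline{\R}$. These two facts together will give both directions of the equivalence.

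For the first direction, take $x \in \R \setminus J$. Since $J$ is closed in $\overline{\R}$, there exists $\delta > 0$ with $(x-\delta, x+\delta) \cap J = \emptyset$, and hence $\tilde{\mu}_{\vec{p}}\bigl((x-\delta, x+\delta)\bigr) = 0$. Consequently $F_{\vec{p}}(y) = F_{\vec{p}}(x-\delta)$ for every $y \in (x-\delta, x+\delta)$, so $F_{\vec{p}}$ is locally constant at $x$. The identity $T_{\vec{p}} = F_{\vec{p}}$ from Lemma \ref{lem:unqiueness fixedpoint} transfers the property to $T_{\vec{p}}$.

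For the converse direction, let $x \in J$ and pick any $\omega \in \Sigma$ with $\pi(\omega) = x$; such $\omega$ exists because $J = \pi(\Sigma)$. The expansion property of $(f_i)_{i\in I}$, together with the fact that for each $\tau \in I^*$ the set $\pi([\tau])$ is contained in $(f_\tau)^{-1}(\overline{\R}\setminus V)$, implies that $\mathrm{diam}\bigl(\pi([\omega_{|n}])\bigr) \to 0$ as $n \to \infty$. Fix an arbitrary $\delta > 0$. Then for all sufficiently large $n$ one has $\pi([\omega_{|n}]) \subset (x-\delta, x+\delta]$, since $x \in \pi([\omega_{|n}])$ and the diameter is eventually smaller than $\delta/2$. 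Therefore
\[
F_{\vec{p}}(x+\delta) - F_{\vec{p}}(x-\delta) = \tilde{\mu}_{\vec{p}}\bigl((x-\delta, x+\delta]\bigr) \ge \mu_{\vec{p}}\bigl([\omega_{|n}]\bigr) = p_{\omega_{|n}} > 0,
\]
because each $p_i > 0$. Since $\delta > 0$ was arbitrary, $F_{\vec{p}}$ is not locally constant at $x$.

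There is no substantive obstacle here beyond the bookkeeping of handling half-open intervals in the definition of $F_{\vec{p}}$ and making sure that $\pi([\omega_{|n}])$ sits strictly inside $(x-\delta, x+\delta]$ rather than, for instance, touching the left endpoint; this is handled trivially by shrinking the diameter until it is less than $\delta/2$. Everything else reduces to the fact that $\tilde{\mu}_{\vec{p}}$ is supported on $J$ and assigns positive mass to every cylinder because the probability vector $\vec{p}$ has strictly positive entries.
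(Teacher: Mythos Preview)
Your proof is correct and is exactly the natural elaboration of what the paper intends: the paper gives no argument at all, stating only that the fact ``follows immediately from the definition of $F_{\vec{p}}$.'' Your write-up makes explicit the two ingredients implicit in that remark --- that $\tilde{\mu}_{\vec{p}}$ is supported on the compact set $J$, and that every cylinder has positive $\mu_{\vec{p}}$-mass --- and the bookkeeping with half-open intervals is handled adequately (one should perhaps note that showing $F_{\vec{p}}(x+\delta)\neq F_{\vec{p}}(x-\delta)$ for all $\delta>0$ suffices because one can apply this with $\delta/2$ to get two points inside $(x-\delta,x+\delta)$ where $F_{\vec{p}}$ differs, but this is a triviality you clearly have in mind).
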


\subsection{Improved spectral gap property for systems with separating condition\label{subsec:Improved-spectral-gap}}

In this section we derive an improved spectral gap property for systems
satisfying the separating condition. We define the potentials 
\[
\varphi:\Sigma\rightarrow\R,\quad\varphi(\omega):=-\log\left|f_{\omega_{1}}'(\pi(\omega))\right|,\quad\text{and}\quad\psi:=\psi_{\vec{p}}:\Sigma\rightarrow\R,\quad\psi(\omega):=\log
 p_{\omega_{1}}.
\]
We define the shift map $\sigma:\Sigma\rightarrow\Sigma$, $\sigma((\omega_{1},\omega_{2},\dots)):=(\omega_{2},\omega_{3},\dots)$.
For $u:\Sigma\rightarrow\R$ and $n\in\N$ we denote by $S_{n}u:=\sum_{k=0}^{n-1}u\circ\sigma^{k}$
the $n$th ergodic sum. Further we let 
\begin{equation} \label{defalphaplusminus}
\alpha_{-}:=\alpha_{-}(\vec{p}):=\inf_{\omega\in\Sigma}\liminf_{n\rightarrow\infty}\frac{S_{n}\psi_{\vec{p}}(\omega)}{S_{n}\varphi(\omega)},\quad\alpha_{+}:=\alpha_{+}(\vec{p}):=\sup_{\omega\in\Sigma}\limsup_{n\rightarrow\infty}\frac{S_{n}\psi_{\vec{p}}(\omega)}{S_{n}\varphi(\omega)},
\end{equation}
and we refer to $\alpha_{-}$ as the bottom of the spectrum. 
\begin{lem}
\label{lem:semicontinuity of boundary points}The map $\vec{p}\mapsto\alpha_{-}(\vec{p})$
is lower semi-continuous on $(0,1)^{s}$. 
\end{lem}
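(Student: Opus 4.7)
The plan is to reduce lower semi-continuity to a simple uniform estimate using two observations: the map $\vec p\mapsto \psi_{\vec p}$ is continuous in the supremum norm on $\Sigma$, and the denominator $S_n\varphi$ grows uniformly linearly in $n$ thanks to the expanding hypothesis. Concretely, $\psi_{\vec p}$ takes only the finitely many values $\log p_1,\dots,\log p_{s+1}$, so by continuity of $\log$ on $(0,1)$, for every $\delta>0$ there is a neighborhood $U$ of $\vec p_0$ in $(0,1)^{s}$ such that $\Vert \psi_{\vec p}-\psi_{\vec p_0}\Vert_{\infty}<\delta$ for all $\vec p\in U$. Summing along the orbit, $|S_{n}\psi_{\vec p}(\omega)-S_{n}\psi_{\vec p_0}(\omega)|<n\delta$ for all $\omega\in\Sigma$ and $n\in\N$.

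The expanding assumption $f_i'(x)\ge\lambda>1$ implies $\varphi(\omega)\le -\log\lambda<0$ for every $\omega\in\Sigma$, hence $-S_{n}\varphi(\omega)\ge n\log\lambda$. Combining with the previous estimate, for every $\vec p\in U$, $\omega\in\Sigma$ and $n\in\N$,
\[
\left|\frac{S_{n}\psi_{\vec p}(\omega)}{S_{n}\varphi(\omega)}-\frac{S_{n}\psi_{\vec p_0}(\omega)}{S_{n}\varphi(\omega)}\right|\le \frac{n\delta}{n\log\lambda}=\frac{\delta}{\log\lambda}.
\]
Given $\epsilon>0$, choose $\delta<\epsilon\log\lambda$. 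Then the displayed inequality yields
\[
\frac{S_{n}\psi_{\vec p}(\omega)}{S_{n}\varphi(\omega)}\ge \frac{S_{n}\psi_{\vec p_0}(\omega)}{S_{n}\varphi(\omega)}-\epsilon
\]
uniformly in $\omega\in\Sigma$ and $n\in\N$. Taking $\liminf_{n\to\infty}$ pointwise and then $\inf_{\omega\in\Sigma}$ preserves the inequality, giving $\alpha_{-}(\vec p)\ge \alpha_{-}(\vec p_0)-\epsilon$ for all $\vec p\in U$. This is the definition of lower semi-continuity at $\vec p_0$.

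There is essentially no obstacle: the only small points to watch are the signs (both $\psi_{\vec p}$ and $\varphi$ are negative, so the ratio is positive and the direction of the inequality after dividing must be tracked) and the fact that in the definition \eqref{defalphaplusminus} it is $\inf_{\omega}\liminf_n$ rather than $\inf$ of averages, so the uniformity of the estimate in both $\omega$ and $n$ is essential — which is exactly what the expanding bound on $|S_n\varphi|$ delivers.
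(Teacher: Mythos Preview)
Your proof is correct and follows essentially the same approach as the paper: both use continuity of $\vec p\mapsto\psi_{\vec p}$ in the supremum norm together with the uniform linear lower bound $-S_n\varphi\ge n\log\lambda$ (the paper writes this as $n\min|\varphi|$) to control the difference of the ratios by $\epsilon$. The one minor difference is that the paper first invokes \cite{MR1738952} to pick a single $\omega$ and a sequence $(n_k)$ realising $\alpha_-(\vec p)$ and then applies the estimate along that sequence, whereas you exploit that the estimate holds uniformly in $\omega$ and $n$ and pass directly to $\liminf_n$ and then $\inf_\omega$; your route is slightly more elementary since it avoids the attainment result.
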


\begin{proof}
Let $\vec{p}_{0}\in(0,1)^{s}$. For every $\epsilon>0$ there exists
$\delta>0$ such that $\Vert\psi_{\vec{p}}-\psi_{\vec{p}_{0}}\Vert<\epsilon/\min|\varphi|$
for every $\vec{p}$ with $|\vec{p}-\vec{p}_{0}|<\delta$. Let $\vec{p}$
with $|\vec{p}-\vec{p}_{0}|<\delta$, $\omega\in\Sigma$ and 
$n_{k}\rightarrow\infty$, as $k\rightarrow \infty$,
such that 
\[
\lim_{k\rightarrow\infty}\frac{S_{n_{k}}\psi_{\vec{p}}(\omega)}{S_{n_{k}}\varphi(\omega)}=\alpha_{-}(\vec{p}).
\]
The existence of such   $\omega\in\Sigma$ and 
$(n_{k})$ follows from \cite{MR1738952}. We have 
\[
\frac{S_{n_{k}}\psi_{\vec{p}}(\omega)}{S_{n_{k}}\varphi(\omega)}=\frac{S_{n_{k}}\psi_{\vec{p}_{0}}(\omega)}{S_{n_{k}}\varphi(\omega)}+\frac{S_{n_{k}}\psi_{\vec{p}}(\omega)-S_{n_{k}}\psi_{\vec{p}_{0}}(\omega)}{S_{n_{k}}\varphi(\omega)}.
\]
Since 
\[
\left|\frac{S_{n_{k}}\psi_{\vec{p}}(\omega)-S_{n_{k}}\psi_{\vec{p}_{0}}(\omega)}{S_{n_{k}}\varphi(\omega)}\right|\le\frac{n_{k}\Vert\psi_{\vec{p}}-\psi_{\vec{p}_{0}}\Vert}{n_{k}\min|\varphi|}<\epsilon\quad\text{and }\quad\liminf_{k\rightarrow\infty}\frac{S_{n_{k}}\psi_{\vec{p}_{0}}(\omega)}{S_{n_{k}}\varphi(\omega)}\ge\alpha_{-}(\vec{p}_{0}),
\]
 we conclude that 
\[
\alpha_{-}(\vec{p})=\lim_{k\rightarrow\infty}\frac{S_{n_{k}}\psi_{\vec{p}}(\omega)}{S_{n_{k}}\varphi(\omega)}\ge\liminf_{k\rightarrow\infty}\frac{S_{n_{k}}\psi_{\vec{p}_{0}}(\omega)}{S_{n_{k}}\varphi(\omega)}-\epsilon\ge\alpha_{-}(\vec{p}_{0})-\epsilon.
\]
\end{proof}
\begin{rem*}
Similarly, one can show that $\vec{p}\mapsto\alpha_{+}(\vec{p})$
is upper semi-continuous on $(0,1)^{s}$. 
\end{rem*}
\begin{lem}
\label{lem:uniform contraction on unbounded fatou}There exists $\eta<1$
such that for every compact set $K\subset\overline{\R}\setminus J$
there exists a constant $C_{K}<\infty$ such that for all $x,y\in K$
belonging to the same connected component of $\overline{\R}\setminus J$,
we have for every $\omega\in I^{*}$, 
\[
d(f_{\omega}(x),f_{\omega}(y))\le C_{K}\eta^{\left|\omega\right|}d(x,y).
\]
\end{lem}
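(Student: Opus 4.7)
The plan is to combine contraction on $V^{\pm}$ with a uniform entry-time estimate into $V$. We may assume $f_i(V^{\pm})\subset V^{\pm}$ for all $i\in I$: since $f_i(\pm\infty)=\pm\infty$ and $\Lip(f_{i|V^{\pm}})\le S<1$, the bound $d(f_i(x),\pm\infty)\le S\cdot d(x,\pm\infty)$ on $V^{\pm}$ allows us to shrink $V^{\pm}$ to a small $d$-ball around $\pm\infty$ without losing the Lipschitz bound. Set $L:=\max_{i\in I}\Lip(f_i)<\infty$ and $\eta:=S$. To obtain a compact enlargement of $K$ staying inside $F:=\overline{\R}\setminus J$ and containing the relevant segments, note that the components of $F$ meeting $K$ form an open cover of compact $K$, so only finitely many $U_1,\dots,U_m$ do. Each $K\cap U_j$ is closed in $\overline{\R}$ (any limit point would lie in $K\cap\partial U_j\subset K\cap J=\emptyset$), hence compact, so $a_j:=\min(K\cap U_j)$ and $b_j:=\max(K\cap U_j)$ lie in $U_j$. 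Set $\widetilde K:=\bigsqcup_{j=1}^{m}[a_j,b_j]$; this is compact and contained in $F$, and contains the segment $[x,y]$ whenever $x,y\in K$ lie in the same component of $F$.

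The uniform entry time rests on the fact that $A_n:=\bigcup_{|\omega|=n}f_\omega^{-1}(\overline{\R}\setminus V)$ is a decreasing sequence of compact sets with $\bigcap_n A_n=J$. Monotonicity is equivalent to forward invariance of $V$; the equality with $J$ follows from K\"onig's lemma applied to the tree $\bigcup_n\{\sigma\in I^n\mid f_\sigma(x)\in\overline{\R}\setminus V\}$ (prefix-closed by forward invariance), producing $\omega\in\Sigma$ with $x\in J_\omega$ for any $x\in\bigcap_n A_n$. Since $\widetilde K$ is compact and disjoint from $J$, some $\widetilde K\cap A_N=\emptyset$, i.e.\ $f_\omega(\widetilde K)\subset V$ for every $\omega\in I^{N}$; forward invariance of $V$ then gives $f_\omega(\widetilde K)\subset V$ for all $|\omega|\ge N$.

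Finally, for $x,y\in K$ in the same component of $F$ and $|\omega|=n\ge N$, decompose $\omega=\omega^{(1)}\omega^{(2)}$ with $|\omega^{(1)}|=N$. Since $[x,y]\subset\widetilde K$, the image $f_{\omega^{(1)}}([x,y])$ is a connected subset of $V=V^{+}\sqcup V^{-}$ and hence lies entirely in one of $V^{\pm}$; forward invariance keeps the $f_{\omega^{(2)}}$-iterates of both endpoints in the same piece, and telescoping the Lipschitz bounds on $V^{\pm}$ gives
\[
d(f_\omega(x),f_\omega(y))\le S^{n-N}\cdot d(f_{\omega^{(1)}}(x),f_{\omega^{(1)}}(y))\le S^{n-N}L^{N}d(x,y)=(L/\eta)^{N}\eta^{n}d(x,y).
\]
For $|\omega|<N$ the trivial bound $d(f_\omega(x),f_\omega(y))\le L^{|\omega|}d(x,y)\le(L/\eta)^{N}\eta^{|\omega|}d(x,y)$ works, and setting $C_K:=(L/\eta)^{N}$ completes the proof. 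The main obstacle is the last topological step: without the connectedness of $[x,y]\subset\widetilde K$, one could have $f_\omega(x)\in V^{+}$ while $f_\omega(y)\in V^{-}$, in which case $d(f_\omega(x),f_\omega(y))$ would remain of order $\diam(\overline{\R})$; building $\widetilde K$ as a compact subset of $F$ and using monotonicity of each $f_\omega$ is precisely what rules this out.
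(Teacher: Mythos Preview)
Your proof is correct and follows essentially the same strategy as the paper's: obtain a uniform entry time $N$ into $V$ on the relevant compact set, use contraction on $V^{\pm}$ for the tail $|\omega|\ge N$, and bound the first $N$ steps trivially by $L^N$. The paper's proof is terser and simply asserts the existence of $N=N(K)$ with both $f_\omega(x),f_\omega(y)$ in the same $V^{\pm}$ (arguing by contradiction via the definition of $J$), whereas you make this explicit by building the enlarged compact $\widetilde K\subset\overline{\R}\setminus J$, invoking K\"onig's lemma for the uniform entry time, and using connectedness of $f_{\omega^{(1)}}([x,y])$ to land in a single $V^{\pm}$; these are exactly the details the paper leaves implicit.
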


\begin{proof}
There exists $N=N(K)\in\N$ such that for all $n\ge N$, $\omega\in I^{n}$
and all $x,y\in K$ belonging to the same component of $\overline{\R}\setminus J$,
we have either $f_{\omega}(x),f_{\omega}(y)\in V^{+}$ or  
$f_{\omega}(x),f_{\omega}(y)\in V^{-}$.
For otherwise, there exists $\tau\in \Sigma$ such that 
$f_{\tau_{|n}}(x)\in\overline{\R}\setminus V$,
for all $n\in\N$, contradicting that $x\notin J$. Let $S:=\max\left\{ \Lip(f_{i|V^{+}}),\Lip(f_{i|V^{-}})\mid i\in I\right\} $.
Since $(f_{i})_{i\in I}$ is contracting near infinity, we have $S<1$.
For $n\ge N$ and $\omega\in I^{n}$ we have 
\begin{align*}
d\left(f_{\omega}(x),f_{\omega}(y)\right) & \le S^{n-N}d\left(f_{\omega_{|N}}(x),f_{\omega_{|N}}(y)\right)\le S^{n-N}\left(\max_{i\in I}\Lip(f_{i})\right)^{N}d\left(x,y\right)=S^{n}C_{K}d(x,y),
\end{align*}
where we have set $C_{K}:=S^{-N}\left(\max_{i\in I}\Lip(f_{i})\right)^{N}$.
The estimate for $n<N$ can be shown similarly. 
\end{proof}
\begin{defn}
\label{def:sepcond}We say that $(f_{i})_{i\in I}$ satisfies the
\emph{separating condition}  if there exists a non-empty bounded open
interval  $O\subset\R$ such that $f_{i}^{-1}(O)\subset O$, for all
$i\in I$, and for all $i,j\in I$ with $i\neq j$, we have $f_{i}^{-1}(\overline{O})\cap f_{j}^{-1}(\overline{O})=\emptyset.$
If the separating condition holds, then we may always assume that
$O$ is bounded and that $J\subset O$. 
\end{defn}

The proof of the following lemma is standard and therefore omitted
(see e.g., \cite{MR2003772}).
\begin{lem}
[Bounded distortion] \label{lem:bounded distortion}Let $\Omega$
be a bounded open set such that $f_{i}^{-1}(\Omega)\subset\Omega$
for all $i\in I$. Then we have 
\begin{equation}
D:=D(\Omega):=\sup\left\{ \frac{\left|f'_{\gamma}(v_{1})\right|}{\left|f'_{\gamma}(v_{2})\right|}\Big|\gamma\in I^{*},v_{1},v_{2}\in f_{\gamma}^{-1}(\overline{\Omega})\right\} <\infty.\label{eq:bdd-distortion}
\end{equation}
\end{lem}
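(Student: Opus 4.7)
The plan is a standard telescoping/chain-rule argument exploiting (i) uniform expansion, (ii) backward invariance of $\overline{\Omega}$ under each $f_i^{-1}$, and (iii) $\epsilon$-Hölder continuity of $f_i'$ combined with the uniform lower bound $f_i' \ge \lambda > 1$.

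First, I would fix $\gamma = (\gamma_1,\dots,\gamma_n) \in I^n$ and $v_1, v_2 \in f_\gamma^{-1}(\overline{\Omega})$, and set $y_{k,i} := f_{(\gamma_1,\dots,\gamma_{k-1})}(v_i)$ for $1 \le k \le n$ (with $y_{1,i} := v_i$). The key geometric observation is that, since $f_j^{-1}(\Omega) \subset \Omega$ and $f_j$ is a homeomorphism, we also have $f_j^{-1}(\overline{\Omega}) \subset \overline{\Omega}$, so each $y_{k,i}$ lies in $\overline{\Omega}$, and moreover both $y_{k,1}, y_{k,2}$ are preimages under $f_{(\gamma_k,\dots,\gamma_n)}$ of points in $\overline{\Omega}$.

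By the chain rule, $\log|f'_\gamma(v_i)| = \sum_{k=1}^{n} \log|f'_{\gamma_k}(y_{k,i})|$, so that
\[
\left|\log \frac{|f'_\gamma(v_1)|}{|f'_\gamma(v_2)|}\right| \;\le\; \sum_{k=1}^n \Big|\log|f'_{\gamma_k}(y_{k,1})| - \log|f'_{\gamma_k}(y_{k,2})|\Big|.
\]
The expansion property gives $|f_{(\gamma_k,\dots,\gamma_n)}(y_{k,1}) - f_{(\gamma_k,\dots,\gamma_n)}(y_{k,2})| \ge \lambda^{n-k+1}|y_{k,1}-y_{k,2}|$, and since the left-hand side is at most $\operatorname{diam}(\overline{\Omega}) < \infty$, we obtain $|y_{k,1}-y_{k,2}| \le \lambda^{-(n-k+1)}\operatorname{diam}(\overline{\Omega})$.

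Next, since $|f'_{\gamma_k}| \ge \lambda > 0$ and $f'_{\gamma_k}$ is $\epsilon$-Hölder continuous, the function $\log|f'_{\gamma_k}|$ is $\epsilon$-Hölder on the bounded set $\overline{\Omega}$, with some constant $L$ independent of $k$ (as there are only finitely many $f_i$). Hence each summand is bounded by $L|y_{k,1}-y_{k,2}|^\epsilon \le L\operatorname{diam}(\overline{\Omega})^\epsilon \lambda^{-(n-k+1)\epsilon}$, and summing the resulting geometric series yields
\[
\left|\log \frac{|f'_\gamma(v_1)|}{|f'_\gamma(v_2)|}\right| \;\le\; L\operatorname{diam}(\overline{\Omega})^\epsilon \sum_{j=1}^\infty \lambda^{-j\epsilon} \;<\; \infty,
\]
with the bound independent of $\gamma \in I^*$ and $v_1,v_2$. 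Exponentiating gives the claimed finiteness of $D(\Omega)$.

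There is no real obstacle here; the only point requiring a moment's care is justifying $f_i^{-1}(\overline{\Omega}) \subset \overline{\Omega}$ from $f_i^{-1}(\Omega) \subset \Omega$, which follows immediately because each $f_i$ is a homeomorphism of $\R$. Everything else is the classical bounded distortion estimate for expanding conformal IFS, as in \cite{MR2003772}.
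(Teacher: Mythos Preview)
Your proof is correct and is exactly the standard bounded-distortion argument the paper has in mind; the paper itself omits the proof entirely, stating only that it is standard and referring to \cite{MR2003772}. Your telescoping chain-rule estimate, combined with the exponential contraction of preimage distances and the $\epsilon$-H\"older continuity of $\log|f_i'|$ on the compact set $\overline{\Omega}$, is precisely the classical route.
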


\begin{lem}
\label{lem: doeblin fortet inequality}Suppose that $(f_{i})_{i\in I}$
satisfies the separating condition. Let $\vec{p}_{0}\in(0,1)^{s}$
and $0<\alpha<\alpha_{-}(\vec{p}_{0})$. Then there exist $\delta>0$,
$n\in\N$ and constants $0<c<1$ and $C>0$ such that,\,\,\,for every $\vec{p}\in 
B(\vec{p}_{0},\delta)$,
$h\in\mathcal{C}^{\alpha}(\overline{\R})$ and for all $x,y\in\overline{\R}$,
\[
\left|M_{\vec{p}}^{n}h(x)-M_{\vec{p}}^{n}h(y)\right|\le\left(c\Vert h\Vert_{\alpha}+C\Vert h\Vert_{\infty}\right)d(x,y)^{\alpha}.
\]
\end{lem}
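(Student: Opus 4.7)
The proof extends the decomposition strategy of Lemma \ref{lem:doeblin-fortetinequality general}, but exploits the separating condition together with the bounded distortion estimate (Lemma \ref{lem:bounded distortion}) to sharpen the range of admissible H\"older exponents all the way up to the bottom of the spectrum $\alpha_{-}(\vec{p}_{0})$.

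I would first choose $\alpha'\in(\alpha,\alpha_{-}(\vec{p}_{0}))$ and invoke the lower semi-continuity of $\alpha_{-}$ (Lemma \ref{lem:semicontinuity of boundary points}) to pick $\delta>0$ so that $\alpha<\alpha'<\alpha_{-}(\vec{p})$ for all $\vec{p}\in B(\vec{p}_{0},\delta)$. Fix the open interval $O\supset J$ from Definition \ref{def:sepcond}; the separating condition ensures the cylinders $(f_{\omega}^{-1}(\overline{O}))_{\omega\in I^{n}}$ are pairwise disjoint, and Lemma \ref{lem:bounded distortion} yields $\diam(f_{\omega}^{-1}(\overline{O}))\asymp|f'_{\omega}(z)|^{-1}$ for any $z\in f_{\omega}^{-1}(\overline{O})$, with constants independent of $\omega$ and $\vec{p}$. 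The crucial quantitative input is a pointwise estimate $p_{\omega}\le C_{1}|f'_{\omega}(z_{\omega})|^{-\alpha'}$ valid for all sufficiently long $\omega$ and any $z_{\omega}\in f_{\omega}^{-1}(\overline{O})$, uniformly in $\vec{p}\in B(\vec{p}_{0},\delta)$; this is to be obtained by a compactness argument on $\Sigma$ combined with the strict inequality $\alpha'<\alpha_{-}(\vec{p})$, using the H\"older regularity of the potentials $\psi_{\vec{p}},\varphi$ and bounded distortion to pass from any given sequence to nearby words of the same length.

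With this inequality in hand, I would then follow the argument of Lemma \ref{lem:doeblin-fortetinequality general}, decomposing $I^{rn}$ into disjoint sets $A(0),A(1),\dots,A(n-1)$ and $B(n-1)$ according to the first block $k\le n-1$ at which the word $\omega$ sends a fixed neighbourhood of $\{x,y\}$ into the escape region $V$. The decisive refinement replaces the crude Lipschitz bound $\Lambda^{r(k+1)}$ used in the general lemma with the sharp expansion factor $D\cdot|f'_{\omega_{|rk}}(z_{\omega})|$ coming from bounded distortion on the disjoint cylinder $f_{\omega_{|rk}}^{-1}(\overline{O})$. Combining this with the contraction near infinity (rate $S<1$) on the remaining $r(n-k-1)$ iterates and the H\"older regularity of $h$, the $A(k)$-contribution is bounded by a multiple of
\[
V_{\alpha}(h)\,S^{r(n-k-1)\alpha}\,d(x,y)^{\alpha}\sum_{\tau\in I^{rk}}p_{\tau}|f'_{\tau}(z_{\tau})|^{\alpha}.
\]
Applying $p_{\tau}\le C_{1}|f'_{\tau}(z_{\tau})|^{-\alpha'}$ converts the inner sum to $C_{1}\sum_{\tau}|f'_{\tau}(z_{\tau})|^{\alpha-\alpha'}$; since $\alpha<\alpha'$ and $|f'_{\tau}|\ge\lambda^{rk}$ by expansion, and the number of $\tau$'s is controlled via the disjointness of the cylinders and bounded distortion, the total $A(k)$-contribution decays exponentially in both $k$ and $n-k$. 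The $B(n-1)$-contribution is handled exactly as in the general case via the trivial bound $|h(f_{\omega}(\cdot))|\le\Vert h\Vert_{\infty}$ and the exponential decay of $\mu_{\vec{p}}(B(n-1))$. Choosing $n$ large delivers the Doeblin-Fortet inequality with $c<1$.

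The main obstacle is the uniform pointwise estimate $p_{\omega}\le C_{1}|f'_{\omega}(z_{\omega})|^{-\alpha'}$: since $\alpha_{-}$ is defined as an infimum of liminfs along symbolic sequences, a direct appeal only produces the inequality eventually along each sequence with sequence-dependent constants, whereas we need it for all sufficiently long $\omega$ with constants uniform in $\omega$ and in $\vec{p}$ near $\vec{p}_{0}$. This upgrade should use the compactness of $\Sigma$, the continuity of $S_{n}\psi_{\vec{p}}/S_{n}\varphi$ in $\vec{p}$, and a subadditivity argument in $n$ to promote the pointwise convergence to the required uniform statement; this is the genuinely new analytical ingredient beyond the proof of Lemma \ref{lem:doeblin-fortetinequality general}.
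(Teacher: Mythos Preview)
Your sketch has a genuine gap in the control of the $A(k)$-contribution. After applying the pointwise bound $p_{\tau}\le C_{1}|f'_{\tau}|^{-\alpha'}$ you are left with $\sum_{\tau\in I^{rk}}|f'_{\tau}(z_{\tau})|^{\alpha-\alpha'}$. Disjointness of the cylinders $f_{\tau}^{-1}(\overline{O})$ only gives $\sum_{\tau}|f'_{\tau}|^{-1}\le C$, which does \emph{not} control $\sum_{\tau}|f'_{\tau}|^{-(\alpha'-\alpha)}$ when $0<\alpha'-\alpha<1$: the crude bound yields at best $(s+1)^{rk}\lambda^{-rk(\alpha'-\alpha)}$, and there is no reason for $(s+1)\lambda^{-(\alpha'-\alpha)}<1$ when $\alpha$ is close to $\alpha_{-}(\vec{p}_{0})$. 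Moreover, bounded distortion (Lemma~\ref{lem:bounded distortion}) requires $x,y\in f_{\tau}^{-1}(\overline{\Omega})$; for a generic $\tau\in I^{rk}$ the image $f_{\tau}(x)$ has long left $\overline{O}$ and may be near $\pm\infty$, where no such comparison to $|f'_{\tau}|$ is available. In short, the $A(k)/B(n-1)$ decomposition of Lemma~\ref{lem:doeblin-fortetinequality general} sums over too many branches to benefit from the separating condition.

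The paper's proof uses the separating condition in a different, essential way. For $x\in O$ there is a \emph{unique} sequence $\omega$ with $f_{\omega_{|j}}(x)\in O$ for all $j\le\ell_{x}$, and one decomposes $I^{n}=\bigcup_{j\le n}\{\omega_{|j}\tau:|\tau|=n-j,\ \tau_{1}\neq\omega_{j+1}\}$ by the first position of deviation from this single word. For each $j$ there is therefore only \emph{one} prefix $\omega_{|j}$, so the resulting sum is $\sum_{j\le n}p_{\omega_{|j}}\eta^{\alpha(n-j-1)}|f'_{\omega_{|j}}(x)|^{\alpha}$ with no counting factor, and the estimate \eqref{eq:uniform psi phi estimate} (which is exactly your pointwise bound, already established in \cite{JS2016}) converts this to $\sum_{j\le n}\eta^{\alpha(n-j-1)}|f'_{\omega_{|j}}(x)|^{\alpha-\alpha_{-}(\vec{p})}$, which decays. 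The contraction after deviation is supplied not by entering $V$ but by Lemma~\ref{lem:uniform contraction on unbounded fatou}: $f_{\tau_{1}}(f_{\omega_{|j}}(x))$ lands in a fixed compact set $K'\subset\overline{\R}\setminus J$, where all further iterates contract uniformly. Thus the ``main obstacle'' you identify is in fact already available as \eqref{eq:uniform psi phi estimate}; the missing idea is the one-branch-per-level decomposition along the coding of $x$.
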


\begin{proof}
Suppose that the separating condition holds with bounded open interval
$O$. We may assume that $J\subset O$ and that there exists $r_{0}>0$
such that for all $i,j\in I$ with $i\neq j$, 
\[
\overline{f_{i}^{-1}(B(O,r_{0}))}\subset B(O,r_{0})\quad\text{and }f_{j}\left(\overline{f_{i}^{-1}(B(O,r_{0}))}\right)\subset\overline{\R}\setminus J,
\]
where $B(O,r_{0}):=\bigcup_{u\in O}B(u,r_{0})$. For $i\neq j$ we
define the compact sets 
\[
K_{i,j}:=f_{j}\left(\overline{f_{i}^{-1}(B(O,r_{0}))}\right)\quad\text{and}\quad K':=\bigcup_{i,j\in I,\,\,i\neq j}K_{i,j}\subset\overline{\R}\setminus J.
\]
Let $\delta:=d(\overline{\R}\setminus O,J)$. Since $J\subset O$
is compact, we have $\delta>0$. Define the compact set 
\[
K:=K'\cup\left\{ u\in\overline{\R}\setminus J,d(u,J)\ge\delta/2\right\} \subset\overline{\R}\setminus J.
\]
Since $\overline{O}$ is compact, by modifying $r_{0}$ if necessary,
we may assume that 
\begin{equation}
\forall x'\in\overline{O}\,\,\forall y'\in B(x',r_{0})\,\,\forall i\in I:\,\,d(f_{i}(x'),f_{i}(y'))<\delta/2.\label{eq:uniform-continuous delta}
\end{equation}

Let $x,y\in\overline{\R}$ and $n\in\N$ sufficiently large (to be
determined later). We now distinguish two cases. First suppose that
$x\notin O$.  We may assume that $d(x,y)\le\delta/2$. Hence, $x$
and $y$ are contained in the compact set $\left\{ u\in\overline{\R}\mid d(u,J)\ge\delta/2\right\} \subset K\subset\overline{\R}\setminus J$
and belong to the same connected component of $\overline{\R}\setminus J$.
Therefore, by Lemma \ref{lem:uniform contraction on unbounded fatou},
we have 
\[
\left|M_{\vec{p}}^{n}h(x)-M_{\vec{p}}^{n}h(y)\right|\le\sum_{\left|\tau\right|=n}p_{\tau}\left|h(f_{\tau}(x))-h(f_{\tau}(y))\right|\le\sum_{\left|\tau\right|=n}p_{\tau}\Vert h\Vert_{\alpha}\left(C_{K}\eta^{n}\right)^{\alpha}d(x,y)^{\alpha}\le\Vert h\Vert_{\alpha}\left(C_{K}\eta^{n}\right)^{\alpha}d(x,y)^{\alpha}.
\]
For $n$ sufficiently large, we have $\left(C_{K}\eta^{n}\right)^{\alpha}<1$.
This finishes the proof in the case when $x\notin O$. 

Next we consider the remaining case $x\in O$.  Let 
\[
\ell_{x}:=\sup\left\{ \ell\ge0\mid\exists\omega\in I^{*} \cup \{ \emptyset \} 
\text{ such that }f_{\omega_{|j}}(x)\in O\,\,\text{ for any }0\le 
j\le\ell\right\} \in\N\cup\{0,\infty\}.
\]
Then there exists a unique $(\omega _{1},\ldots, \omega _{\ell_{x}})\in I^{\ell_{x}}$  such that $f_{\omega _{\ell_{x}}}\cdots f_{\omega _{1}}(x)\in O.$
Here, if $\ell_{x}=0$, then we set $f_{\omega _{\ell_{x}}}\cdots f_{\omega 
_{1}}(x)=x$.
We distinguish two subcases (a) $\ell_{x}\ge n$ and (b) $\ell_{x}<n$.
We begin with subcase (a). For all
$j=0,1,2,\dots$ let 
\[B_{j}(x):= f_{\omega |_{j}}^{-1}(B(f_{\omega |_{j}}(x),r_{0})),\]
 where 
$B_{0}(x):=B(x,r_{0}).$ 
We may assume that $y\in B_{n}(x)$. We have the decomposition 
\[
I^{n}=\bigcup_{j\le n}\bigcup_{\left|\tau\right|=n-j,\tau_{1}\neq\omega_{j+1}}\left\{ \omega_{|j}\tau\right\} .
\]
For all $j<n$ and $\tau_{1}\in I\setminus\left\{ \omega_{j+1}\right\} $
we have 
\[
f_{\tau_{1}}\left(f_{\omega_{|j}}\left(B_{n}(x)\right)\right)\subset f_{\tau_{1}}\left(f_{\omega_{j+1}}^{-1}(O,r_{0})\right)\subset K'\subset\overline{\R}\setminus J.
\]
By Lemma \ref{lem:uniform contraction on unbounded fatou}, for all $j<n$ and
$\tau \in I^{n-j}$ with $\tau _{1} \neq \omega _{j+1}$, we have
\begin{align*}
d\left(f_{\tau}\left(f_{\omega_{|j}}(x)\right),f_{\tau}\left(f_{\omega_{|j}}(y)\right)\right) & \le C_{K'}\eta^{n-j-1}d\left(f_{\tau_{1}}(f_{\omega_{|j}}(x)),f_{\tau_{1}}(f_{\omega_{|j}}(y))\right)\\
 & \le C_{K'}\Lip(f_{\tau_{1}})\eta^{n-j-1}d\left(f_{\omega_{|j}}(x),f_{\omega_{|j}}(y)\right).
\end{align*}
Put $C_{1}:=C_{K'}\max_{i\in I}\Lip(f_{i})$. Since $h$ is H\"older
continuous, we can assert that 
\begin{align*}
\left|M_{\vec{p}}^{n}h(x)-M_{\vec{p}}^{n}h(y)\right| & =\left|\sum_{j\le n}p_{\omega|_{j}}\sum_{\left|\tau\right|=n-j,\tau_{1}\neq\omega_{j+1}}p_{\tau}\left(h\left(f_{\tau}\left(f_{\omega_{|j}}(x)\right)\right)-h\left(f_{\tau}\left(f_{\omega_{|j}}(y)\right)\right)\right)\right|\\
 & \le\sum_{j\le n}p_{\omega|_{j}}\sum_{\left|\tau\right|=n-j,\tau_{1}\neq\omega_{j+1}}p_{\tau}\Vert h\Vert_{\alpha}C_{1}^{\alpha}(\eta^{n-j-1})^{\alpha}d\left(f_{\omega_{|j}}(x),f_{\omega_{|j}}(y)\right)^{\alpha}\\
 & \le C_{1}^{\alpha}\Vert h\Vert_{\alpha}\sum_{j\le 
 n}p_{\omega|_{j}}\left(\eta^{n-j-1}\right)^{\alpha}d\left(f_{\omega_{|j}}(x),f_{\omega_{|j}}(y)\right)^{\alpha}.
\end{align*}
Using that $d$ is strongly equivalent to the Euclidean metric on
the compact set $\overline{B(O,r_{0})}\subset\R$ and combining with
the bounded distortion estimate in (\ref{eq:bdd-distortion}) of Lemma
\ref{lem:bounded distortion} with $\Omega:=B(O,r_{0})$, we deduce
the existence of $D_{0}<\infty$ such that 
\[
\left|M_{\vec{p}}^{n}h(x)-M_{\vec{p}}^{n}h(y)\right|\le C_{1}^{\alpha}D_{0}^{\alpha}\Vert h\Vert_{\alpha}\sum_{j\le n}p_{\omega|_{j}}\left(\eta^{n-j-1}\right)^{\alpha}|f_{\omega_{|j}}'(x)|^{\alpha}d(x,y)^{\alpha}.
\]
The following was proved in \cite[(6.2) in the proof of Theorem 1.3]{JS2016}.
There exists a $C(\varphi,\psi_{\vec{p}})$, which depends
continuously on $\vec{p}\in (0,1)^{s}$, such that for all $j\in\N$
and for all $\omega\in\Sigma$, 
\begin{equation}
\e^{S_{j}\psi_{\vec{p}}(\omega)}\le C(\varphi,\psi_{\vec{p}})\e^{\alpha_{-}(\vec{p})S_{j}\varphi(\omega)}.\label{eq:uniform psi phi estimate}
\end{equation}
Let $\omega=(\omega_{1},\dots,\omega_{\ell_{x}},\omega_{1},\dots,\omega_{\ell_{x}},\dots)\in\Sigma$.
By combining (\ref{eq:uniform psi phi estimate}) with the bounded
distortion estimate in (\ref{eq:bdd-distortion}) we have for some
$D_{1}$ with $D_{1}>D_{0}$ 
\[
p_{\omega_{|j}}\le 
D_{1}^{\alpha_{-}(\vec{p})}C(\varphi,\psi_{\vec{p}})|f_{\omega_{|j}}'(x)|^{-\alpha_{-}(\vec{p})}.
\]
Hence, we obtain 
\begin{align*}
\left|M_{\vec{p}}^{n}h(x)-M_{\vec{p}}^{n}h(y)\right| & \le C_{1}^{\alpha}D_{1}^{\alpha+\alpha_{-}(\vec{p})}C(\varphi,\psi_{\vec{p}})\Vert h\Vert_{\alpha}\sum_{j\le n}\eta^{\alpha(n-j-1)}|f_{\omega_{|j}}'(x)|^{\alpha-\alpha_{-}(\vec{p})}d(x,y)^{\alpha}.
\end{align*}

For $\delta>0$ sufficiently small and $\vec{p}\in B(\vec{p}_{0},\delta)$
we have $\sup_{\vec{p}\in B(\vec{p}_{0},\delta)}(\alpha-\alpha_{-}(\vec{p}))<0$
by Lemma \ref{lem:semicontinuity of boundary points}. Also we can
define $C(\varphi,\psi):=\sup_{\vec{p}\in B(\vec{p}_{0},\delta)}C(\varphi,\psi_{\vec{p}})<\infty$.
Since $|f_{\omega_{|j}}'(x)|\ge\lambda^{j}$ there exist $\tilde{\eta}<1$
and $C_{2}<\infty$ such that for all $j\le n$ and $\vec{p}\in B(\vec{p}_{0},\delta)$,
\[
\eta^{\alpha\left(n-j-1\right)}|f_{\omega_{|j}}'(x)|^{\alpha-\alpha_{-}(\vec{p})}\le C_{2}\tilde{\eta}^{n}.
\]
Therefore, 
\[
\left|M^{n}h(x)-M^{n}h(y)\right|\le C_{1}^{\alpha}D_{1}^{\alpha+\alpha_{-}(\vec{p})}C(\varphi,\psi)\Vert h\Vert_{\alpha}C_{2}n\tilde{\eta}^{n}d(x,y)^{\alpha}.
\]
Put $c:=C_{1}^{\alpha}D_{1}^{\alpha+\alpha_{-}(\vec{p})}C(\varphi,\psi)\Vert h\Vert_{\alpha}C_{2}n\tilde{\eta}^{n}$.
For $n$ sufficiently large we have $c<1$. Thus, assuming subcase (a),
we have derived the desired estimate.

Finally, to complete the proof, let us consider the subcase (b) when
$\ell_{x}<n$. We may assume that $y\in B_{\ell_{x}}(x)$. We estimate
\begin{align*}
\left|M_{\vec{p}}^{n}h(x)-M_{\vec{p}}^{n}h(y)\right| & \le\left|\sum_{j<\ell_{x}}p_{\omega|_{j}}\sum_{\left|\tau\right|=n-j,\tau_{1}\neq\omega_{j+1}}p_{\tau}\left(h\left(f_{\tau}\left(f_{\omega_{|j}}(x)\right)\right)-h\left(f_{\tau}\left(f_{\omega_{|j}}(y)\right)\right)\right)\right|\\
 & \quad\quad+p_{\omega|_{\ell_{x}}}\left|\sum_{\left|\tau\right|=n-\ell_{x}}p_{\tau}\left(h\left(f_{\tau}\left(f_{\omega|_{\ell_{x}}}(x)\right)\right)-h\left(f_{\tau}\left(f_{\omega|_{\ell_{x}}}(y)\right)\right)\right)\right|.
\end{align*}
The first summand on the right-hand side satisfies an $\alpha$-H\"older
condition with $c<1$ for $n$ large by the same arguments as in (a)
above. Finally, to deal with the second summand, let $x':=f_{\omega|_{\ell_{x}}}(x)$
and $y':=f_{\omega|_{\ell_{x}}}(y)$. Since $y\in B_{\ell_{x}}(x)$
we have that $d(x',y')<r_{0}$. By the definition of $\ell_{x}$ we
have that $f_{\tau_{1}}(x')\notin O$ for all $\tau_{1}\in I$. By
(\ref{eq:uniform-continuous delta}) we have $d(f_{\tau_{1}}(y'),J)\ge\delta/2>0$.
Since moreover $f_{\tau_{1}}(x')$ and $f_{\tau_{1}}(y')$ are in
the same component of $\overline{\R}\setminus J$, Lemma \ref{lem:uniform contraction on unbounded fatou}
implies that 
\[
d\left(f_{\tau}\left(f_{\omega|_{\ell_{x}}}(x)\right),f_{\tau}\left(f_{\omega|_{\ell_{x}}}(y)\right)\right)\le C_{K}\eta^{n-\ell_{x}}d(f_{\omega|_{\ell_{x}}}(x),f_{\omega|_{\ell_{x}}}(y)).
\]
The rest of the proof runs as in subcase (a) above. The proof is complete.
\end{proof}
We thus obtain the following strengthening of Theorem \ref{thm:spectralgap}
when the separating condition holds.
\begin{thm}
\label{thm: spectral gap separating condition}Suppose that $(f_{i})_{i\in I}$
satisfies the separating condition.  Let $\vec{p}_{0}\in(0,1)^{s}$
and $\alpha<\alpha_{-}(\vec{p}_{0})$. There exists $\delta>0$ such
that $M_{\vec{p}}:\mathcal{C}^{\alpha}(\overline{\R})\rightarrow\mathcal{C}^{\alpha}(\overline{\R})$
has the spectral gap property for every $\vec{p}\in B(\vec{p}_{0},\delta)$.
Moreover, the map $\vec{p}\mapsto T_{\vec{p}}\in\mathcal{C}^{\alpha}(\overline{\R})$
is real-analytic on $B(\vec{p}_{0},\delta)$. 
\end{thm}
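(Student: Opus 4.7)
The plan is to follow the same template as the proof of Theorem \ref{thm:spectralgap}, but with the weaker Doeblin--Fortet bound from Lemma \ref{lem:doeblin-fortetinequality general} replaced by the sharper bound from Lemma \ref{lem: doeblin fortet inequality}, which holds for every $\alpha$ strictly below $\alpha_{-}(\vec{p}_{0})$. First, given $\vec{p}_{0}\in(0,1)^{s}$ and $\alpha<\alpha_{-}(\vec{p}_{0})$, Lemma \ref{lem: doeblin fortet inequality} supplies $\delta>0$, an iterate $n\in\N$ and constants $0<c<1$, $C>0$ such that
\[
\Vert M_{\vec{p}}^{n}h\Vert_{\alpha}\le c\Vert h\Vert_{\alpha}+(C+1)\Vert h\Vert_{\infty},\quad h\in\mathcal{C}^{\alpha}(\overline{\R}),\;\vec{p}\in B(\vec{p}_{0},\delta),
\]
which is the Ionescu-Tulcea and Marinescu inequality (the extra $+1$ absorbs the trivial bound $\Vert M_{\vec{p}}^{n}h\Vert_{\infty}\le\Vert h\Vert_{\infty}$). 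Invoking the Ionescu-Tulcea and Marinescu theorem \cite{MR0037469}, together with compactness of the inclusion $\mathcal{C}^{\alpha}(\overline{\R})\hookrightarrow\mathcal{C}(\overline{\R})$ (Arzel\`a--Ascoli), I then conclude that $M_{\vec{p}}$ is quasi-compact on $\mathcal{C}^{\alpha}(\overline{\R})$: its spectrum consists of finitely many eigenvalues of modulus one of finite multiplicity, plus a part contained in a disc of radius strictly less than one.

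Next I would pin down the peripheral spectrum. Combining the almost periodicity of $M_{\vec{p}}$ on $\mathcal{C}(\overline{\R})$ with Ljubich's decomposition recalled just after Proposition 2.2 and the limit identification \eqref{eq:Mp convergence measurable}, every $\alpha$-H\"older eigenfunction of $M_{\vec{p}}$ for a unimodular eigenvalue is in particular continuous, hence belongs to $\R T_{\vec{p}}\varoplus\R\1$, and in fact has eigenvalue $1$. This is precisely the spectral gap property, and it implies $T_{\vec{p}}\in\mathcal{C}^{\alpha}(\overline{\R})$ as well as exponentially fast convergence in $\Vert\cdot\Vert_{\alpha}$ of $M_{\vec{p}}^{n}h$ to its projection onto $\R T_{\vec{p}}\varoplus\R\1$.

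For the real-analyticity statement I would apply Kato's perturbation theory for bounded linear operators \cite{katoperturbationMR0407617}: the map $\vec{p}\mapsto M_{\vec{p}}\in L(\mathcal{C}^{\alpha}(\overline{\R}))$ is polynomial (affine in each $p_{i}$), and $1$ is an isolated eigenvalue of $M_{\vec{p}_{0}}$ of finite multiplicity by the previous step. After shrinking $\delta$ if necessary, the associated spectral projection depends real-analytically on $\vec{p}$, and since $T_{\vec{p}}$ is obtained by applying this projection to a fixed function and removing its component in $\R\1$, the map $\vec{p}\mapsto T_{\vec{p}}\in\mathcal{C}^{\alpha}(\overline{\R})$ is real-analytic on $B(\vec{p}_{0},\delta)$. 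The genuine technical obstacle has already been absorbed into Lemma \ref{lem: doeblin fortet inequality}; once that lemma is in hand, the remaining argument is routine Ionescu-Tulcea and Marinescu plus Kato, and the only subtle point to verify is that the peripheral spectrum of $M_{\vec{p}}$ on $\mathcal{C}^{\alpha}(\overline{\R})$ coincides with that on $\mathcal{C}(\overline{\R})$, which follows because H\"older eigenfunctions are in particular continuous.
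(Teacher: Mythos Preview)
Your proposal is correct and follows essentially the same approach as the paper: the paper states Theorem \ref{thm: spectral gap separating condition} as an immediate strengthening of Theorem \ref{thm:spectralgap}, obtained by replacing Lemma \ref{lem:doeblin-fortetinequality general} with Lemma \ref{lem: doeblin fortet inequality} in the Ionescu-Tulcea--Marinescu plus Kato argument. Your write-up is in fact more detailed than the paper's (which simply cites \cite{MR0037469} and \cite{katoperturbationMR0407617}), but the strategy is identical.
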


\begin{cor}
Suppose that $(f_{i})_{i\in I}$ satisfies the separating condition.
Let $\vec{p}_{0}\in(0,1)^{s}$ and $\alpha<\alpha_{-}(\vec{p}_{0})$.
There exists $\delta>0$ such that $\T_{\vec{p}}\subset\mathcal{C}^{\alpha}(\overline{\R})$
for every $\vec{p}\in B(\vec{p}_{0},\delta)$. 
\end{cor}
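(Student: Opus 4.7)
The plan is to deduce this corollary directly from Theorem \ref{thm: spectral gap separating condition}, which asserts that for every $\vec{p}_{0}\in(0,1)^{s}$ and every $\alpha<\alpha_{-}(\vec{p}_{0})$, there exists $\delta>0$ such that the map $\Phi:\vec{p}\mapsto T_{\vec{p}}$ is real-analytic from $B(\vec{p}_{0},\delta)\subset\R^{s}$ into the Banach space $\mathcal{C}^{\alpha}(\overline{\R})$. Real-analyticity as a Banach-space-valued map on an open set implies that, for every $\vec{n}\in\N_{0}^{s}$ and every $\vec{p}\in B(\vec{p}_{0},\delta)$, the Fréchet partial derivative
\[
D^{\vec{n}}\Phi(\vec{p}):=\frac{\partial^{n_{1}+\cdots+n_{s}}\Phi}{\partial u_{1}^{n_{1}}\cdots\partial u_{s}^{n_{s}}}(\vec{p})
\]
exists as an element of $\mathcal{C}^{\alpha}(\overline{\R})$.

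The next step is to identify this Banach-space partial derivative with the pointwise partial derivative $C_{\vec{n},\vec{p}}$ introduced in Definition \ref{intro def:generalised Takagi}. For each fixed $x\in\overline{\R}$, evaluation $\mathrm{ev}_{x}:h\mapsto h(x)$ is a bounded linear functional on $\mathcal{C}^{\alpha}(\overline{\R})$ because $|h(x)|\le\Vert h\Vert_{\infty}\le\Vert h\Vert_{\alpha}$. Bounded linear functionals commute with Fréchet differentiation, so
\[
\bigl(D^{\vec{n}}\Phi(\vec{p})\bigr)(x)=\mathrm{ev}_{x}\bigl(D^{\vec{n}}\Phi(\vec{p})\bigr)=D^{\vec{n}}(\mathrm{ev}_{x}\circ\Phi)(\vec{p})=\frac{\partial^{n_{1}+\cdots+n_{s}}}{\partial u_{1}^{n_{1}}\cdots\partial u_{s}^{n_{s}}}T_{(u_{1},\dots,u_{s})}(x)\Big|_{(u_{1},\dots,u_{s})=\vec{p}}=C_{\vec{n},\vec{p}}(x).
\]
Hence $C_{\vec{n},\vec{p}}=D^{\vec{n}}\Phi(\vec{p})\in\mathcal{C}^{\alpha}(\overline{\R})$ for every $\vec{n}\in\N_{0}^{s}$ and every $\vec{p}\in B(\vec{p}_{0},\delta)$.

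Finally, by Definition \ref{intro def:generalised Takagi}, $\T_{\vec{p}}$ is the $\R$-linear span of the functions $\{C_{\vec{n},\vec{p}}:\vec{n}\in\N_{0}^{s}\}$. Since each generator lies in the vector space $\mathcal{C}^{\alpha}(\overline{\R})$, we conclude that $\T_{\vec{p}}\subset\mathcal{C}^{\alpha}(\overline{\R})$ for every $\vec{p}\in B(\vec{p}_{0},\delta)$, which is the desired conclusion. There is no serious obstacle here, as the corollary is essentially a formal consequence of the real-analyticity statement in Theorem \ref{thm: spectral gap separating condition}; the only minor point to verify carefully is the interchange of pointwise and Banach-space differentiation, and this is handled by the continuity of the evaluation functional on $\mathcal{C}^{\alpha}(\overline{\R})$.
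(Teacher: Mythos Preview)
Your proof is correct and follows exactly the approach the paper intends: the corollary is stated without proof immediately after Theorem \ref{thm: spectral gap separating condition}, as an immediate consequence of the real-analyticity of $\vec{p}\mapsto T_{\vec{p}}\in\mathcal{C}^{\alpha}(\overline{\R})$. You have simply made explicit the standard step the paper leaves to the reader, namely that Fr\'echet derivatives of a real-analytic Banach-space-valued map lie in the target space and agree with the pointwise derivatives $C_{\vec{n},\vec{p}}$ via the bounded evaluation functionals.
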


\begin{rem}
\label{rem:sharp alpha minus}By Proposition \ref{prop:derivatives are not alpha- hoelder}
below it will turn out that in many cases, $\T_{p}\nsubseteq\mathcal{C}^{\alpha_{-}(p)}(\overline{\R})$.
Hence, by the previous corollary, the spectral gap property stated
in Theorem \ref{thm: spectral gap separating condition} is sharp.
In particular, this is the case for the classical Takagi function.
Namely, let $f_{1}(x)=2x$, $f_{2}(x)=2x-1$. Then $J=[0,1]$ and
$T_{1/2}(x)=x$ for $x\in J$ and $C_{1}$ is a multiple of the classical
Takagi function on $J$. Hence, $\alpha_{-}=1$, $T_{1/2}$ is Lipschitz
continuous, and $C_{1}$ is $\alpha$-H\"older continuous for every
$\alpha<1$, but not Lipschitz continuous. For further examples, see
Proposition \ref{prop:derivatives are not alpha- hoelder}. 
\end{rem}

\section{Multifractal Analysis of the Pointwise H\"older exponent } \label{section:Multifractal}

In this section we perform a complete multifractal analysis of the
pointwise H\"older exponents of the elements of $\T$ associated
with $(f_{i})_{i\in I}$ and $\vec{p}\in(0,1)^{s}$. We begin by providing
the necessary terminology which has been introduced in \cite{JS2016}.
We use $\vec{n}=(n_{1},\dots,n_{s})$ to denote an element of $\N_{0}^{s}$.
Let $\vec{e}_{i}\in\N_{0}^{s}$ denote the element whose $i$th component
is $1$ and all other components are $0$. An  element $A\in \R ^{\N_{0}^{s}\times \N_{0}^{s}}$ is  represented as $A=(A_{\vec{x},\vec{y}})_{(\vec{x},\vec{y})\in \N_{0}^{s}\times \N_{0}^{s}}$,  where $A_{\vec{x},\vec{y}}\in \R $,    and  such an element $A$ is called an ($\N_{0}^{s}$-)matrix.  
$A_{\vec{x},\vec{y}}$ is called the $(\vec{x},\vec{y})$-component of $A.$ We 
denote by $\1_{\vec{n},\vec{m}}\in\R^{\N_{0}^{s}\times\N_{0}^{s}}$
the matrix such that for every $(\vec{x},\vec{y})\in\N_{0}^{s}\times\N_{0}^{s}$
the $(\vec{x},\vec{y})$-component of $\1_{\vec{n},\vec{m}}$ is given
by 
\[
(\1_{\vec{n},\vec{m}})_{\vec{x},\vec{y}}=\begin{cases}
1, & \vec{x}=\vec{n},\,\,\vec{y}=\vec{m}\\
0, & \text{else}.
\end{cases}
\]

In order to investigate $\T$ we define the matrix cocycle $A_{0}:\Sigma\times\N\rightarrow\R^{\N_{0}^{s}\times\N_{0}^{s}}$
given by 
\[
A_{0}(\omega,1):=\begin{cases}
\sum_{\vec{n}\in\N_{0}^{s}}(p_{\omega_{1}}\1_{\vec{n},\vec{n}}+n_{\omega_{1}}\1_{\vec{n},\vec{n}-\vec{e}_{\omega_{1}}}), & \omega_{1}\in\{1,\dots,s\}\\
\sum_{\vec{n}\in\N_{0}^{s}}(p_{\omega_{1}}\1_{\vec{n},\vec{n}}-\sum_{i=1}^{s}n_{i}\1_{\vec{n},\vec{n}-\vec{e}_{i}}), & \omega_{1}=s+1
\end{cases}
\]
and for $k\in\N$, 
\[
A_{0}(\omega,k):=A_{0}(\omega,1)A_{0}(\sigma\omega,1)\dots A_{0}(\sigma^{k-1}\omega,1)\in\R^{\N_{0}^{s}\times\N_{0}^{s}}.
\]

Here,  the matrix product $A_{0}(\tau,1) 
A_{0}(\upsilon,1)\in\R^{\N_{0}^{s}\times\N_{0}^{s}}$ is for $\tau,\upsilon\in 
\Sigma$ and $\vec{l},\vec{m}\in\N_{0}^{s}$ given by  \begin{equation} 
\left(A_{0}(\tau,1)\cdot 
A_{0}(\upsilon,1)\right)_{\vec{l},\vec{m}}:=\sum_{\vec{k}\in\N_{0}^{s}}\left(A_{0}(\tau,1)\right)_{\vec{l},\vec{k}}\cdot\left(A_{0}(\upsilon,1)\right)_{\vec{k},\vec{m}}.\label{eq:definition-matrix
 product} \end{equation}

Note that the sum in (\ref{eq:definition-matrix product}) is actually
a finite sum. Further matrix products in the definition of $A_{0}(\omega ,k)$
are defined in the same way. We also define 
\[
A(\omega,k):=(p_{\omega_{|k}})^{-1}A_{0}(\omega,k)\in\R^{\N_{0}^{s}\times\N_{0}^{s}}.
\]
Moreover, for all $a,b\in\R$ we define the matrix
\[
U(a,b):=(u_{\vec{n}}(a,b))_{\vec{n}\in\N_{0}^{s}}\in\R^{\N_{0}^{s}}\quad\text{given by }u_{\vec{n}}(a,b):=C_{\vec{n}}(a)-C_{\vec{n}}(b).
\]

\begin{rem*} In (\ref{eq:definition-matrix product}) and in the following we 
make use of the product of matrices with an infinite index set. These matrix 
products will always be well defined, since either the first factor of the 
product possesses at most finitely many non-zero entries in each row, or the 
second factor contains at most finitely many non-zero entries in each column. 
\end{rem*}
Since the above definitions of $A_0$ and $A$ coincide with the ones given in 
\cite{JS2016} we 
immediately obtain the following two lemmas. For 
$\vec{n},\vec{m}\in \N_0^s $ we write $\vec{n}\le \vec{m}$ if $n_i\le m_i$ 
for each $1\le i\le s$. 
\begin{lem}[\cite{JS2016}, Lemma 4.5]
	\label{lem:idnm}
	Let $\omega \in \Sigma$ and  $k\in \N .$ Then 
	$A(\omega ,k)_{\vec{n},\vec{n}}=1$ for every $\vec{n}\in \N_{0}^{s}.$ 
	Also, $A(\omega ,k)_{\vec{n},\vec{m}}=0$ unless $\vec{m}\le \vec{n}.$ 
\end{lem}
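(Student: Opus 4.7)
The plan is to prove both assertions simultaneously by induction on $k$, after establishing the base case $k=1$ by direct inspection of the definition and then using the cocycle identity to pass to the induction step.

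For the base case $k=1$, I would simply read off the entries of $A(\omega,1)=p_{\omega_{1}}^{-1}A_{0}(\omega,1)$ from the definition. If $\omega_{1}\in\{1,\dots,s\}$, then the only non-zero entries in row $\vec{n}$ are $A(\omega,1)_{\vec{n},\vec{n}}=p_{\omega_{1}}^{-1}\cdot p_{\omega_{1}}=1$ and, provided $n_{\omega_{1}}\ge 1$, $A(\omega,1)_{\vec{n},\vec{n}-\vec{e}_{\omega_{1}}}=p_{\omega_{1}}^{-1}n_{\omega_{1}}$. If $\omega_{1}=s+1$, the non-zero entries in row $\vec{n}$ are $A(\omega,1)_{\vec{n},\vec{n}}=1$ and $A(\omega,1)_{\vec{n},\vec{n}-\vec{e}_{i}}=-p_{s+1}^{-1}n_{i}$ for $1\le i\le s$ with $n_{i}\ge 1$. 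In either case $A(\omega,1)_{\vec{n},\vec{n}}=1$ and every non-zero off-diagonal entry in row $\vec{n}$ sits in a column $\vec{m}$ with $\vec{m}<\vec{n}$, so both claims hold for $k=1$.

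For the inductive step I would exploit the multiplicativity $p_{\omega|_{k+1}}=p_{\omega_{1}}\cdot p_{\sigma\omega|_{k}}$, which together with the definition of $A_{0}$ yields the cocycle identity
\[
A(\omega,k+1)=A(\omega,1)\cdot A(\sigma\omega,k).
\]
Assuming the lemma for $k$, I would compute
\[
A(\omega,k+1)_{\vec{n},\vec{m}}=\sum_{\vec{l}\in\N_{0}^{s}}A(\omega,1)_{\vec{n},\vec{l}}\,A(\sigma\omega,k)_{\vec{l},\vec{m}}.
\]
By the base case the first factor vanishes unless $\vec{l}\le\vec{n}$, and by the inductive hypothesis the second factor vanishes unless $\vec{m}\le\vec{l}$; transitivity of $\le$ then forces $\vec{m}\le\vec{n}$, giving the triangularity claim. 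For the diagonal, taking $\vec{m}=\vec{n}$ forces $\vec{l}=\vec{n}$ in the sum, so only one term survives, namely $A(\omega,1)_{\vec{n},\vec{n}}\cdot A(\sigma\omega,k)_{\vec{n},\vec{n}}=1\cdot 1=1$.

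No serious obstacle is expected; this is essentially a bookkeeping exercise, and the only point that needs a brief mention is that the matrix product in \eqref{eq:definition-matrix product} is indeed well-defined here because $A(\omega,1)$ has only finitely many non-zero entries in each row (in fact at most $s+1$), so the sum defining the product is finite, matching the remark given after the definition.
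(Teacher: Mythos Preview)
Your proof is correct. The paper does not actually supply its own proof of this lemma; it simply cites \cite[Lemma 4.5]{JS2016}, observing that the definitions of $A_{0}$ and $A$ here agree with those given in that reference. Your induction on $k$ via the cocycle identity $A(\omega,k+1)=A(\omega,1)\,A(\sigma\omega,k)$ is the natural and standard way to establish the upper-triangularity and unit-diagonal claims, and is presumably what the cited proof does as well.
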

%
%
%
%
\begin{lem}[\cite{JS2016}, Lemma 4.8]
\label{lem:growth of cocycle} 
Let $\omega\in I^{\N}$ and $k\in\N$. Put
$m_{i}:=m_{i}(k):=\card\left\{ 1\le j\le k:\omega_{j}=i\right\} $
for $1\le i\le s+1$. 
Let $\vec{m}=(m_{i})_{i=1}^{s}\in \N_{0}^{s}.$ 
Let $\vec{q}$, $\vec{r}\in \N_0^s$ with  $\vec{0}\le\vec{r}\le\vec{q}$. Then 
%
%
there exists a constant $K\ge1$
which depends on $\vec{q}$ and the probability vector $\vec{p}$ 
but not on $k$ 
such that 
\[
\left|A(\omega,k)_{\vec{q},\vec{r}}\right|\le 
K\left(\prod_{i=1}^{s}\tilde{m}_{i}^{q_{i}-r_{i}}\right)\tilde{m}_{s+1}^{|\vec{q}|-|\vec{r}|}
\mbox{ and }
\left|A(\omega ,k)_{\vec{q},\vec{r}}\right| \le Kk^{|\vec{q}|},
\]
where  $\tilde{m}_{j}:=\max\{ 1, m_{j}\}$ for $1\leq j\leq s+1.$
If $\omega_{j}\neq s+1$ for all $1\leq j\leq k$ 
and $m_{i}>q_{i}-r_{i}$ for all $1\le i\le s$,
then there exists $K'>0$ depending only on $\vec{q}$  such that
\[
A(\omega,k)_{\vec{q},\vec{r}}\ge K'\prod_{i=1}^{s}m_{i}^{q_{i}-r_{i}}.
\]
%
\end{lem}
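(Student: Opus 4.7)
The plan is to expand $A(\omega,k)_{\vec q,\vec r}$ via the cocycle definition as a weighted sum over monotone lattice paths $\vec q=\vec v_0\ge\vec v_1\ge\dots\ge\vec v_k=\vec r$ in $\N_0^s$ with $\vec v_j-\vec v_{j-1}\in\{\vec 0,-\vec e_1,\dots,-\vec e_s\}$, and then to count the paths and bound the step weights. From the formula for $A_0(\omega,1)$ together with the normalisation $A(\omega,1)=p_{\omega_1}^{-1}A_0(\omega,1)$, each step $j$ contributes either the diagonal factor $1$ (if $\vec v_j=\vec v_{j-1}$), a factor $(v_{j-1})_i/p_i$ (if $\omega_j=i\in\{1,\dots,s\}$ and $\vec v_j=\vec v_{j-1}-\vec e_i$), or a factor $-(v_{j-1})_i/p_{s+1}$ (if $\omega_j=s+1$ and $\vec v_j=\vec v_{j-1}-\vec e_i$). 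In particular, an $i$-decrement at step $j$ is only possible when $\omega_j\in\{i,s+1\}$, and every non-zero step factor is bounded in modulus by a constant $C(\vec q,\vec p)$.

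For the first upper bound I would split the $q_i-r_i$ required $i$-decrements into $\alpha_i$ occurring at positions $j$ with $\omega_j=i$ and $\beta_i=(q_i-r_i)-\alpha_i$ occurring at positions with $\omega_j=s+1$. For each fixed profile $(\alpha_i,\beta_i)_i$ the number of admissible paths is bounded by
\[
\prod_{i=1}^s\binom{m_i}{\alpha_i}\cdot\binom{m_{s+1}}{|\vec\beta|}\,\frac{|\vec\beta|!}{\prod_i\beta_i!}\le C(\vec q)\prod_{i=1}^s\tilde m_i^{\alpha_i}\,\tilde m_{s+1}^{|\vec\beta|},
\]
where $|\vec\beta|:=\sum_i\beta_i$. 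Combining with the uniform step-factor bound and summing over the finitely many admissible profiles yields the claimed estimate $|A(\omega,k)_{\vec q,\vec r}|\le K\prod_i\tilde m_i^{q_i-r_i}\,\tilde m_{s+1}^{|\vec q|-|\vec r|}$. The companion bound $Kk^{|\vec q|}$ follows from the coarser observation that a path has at most $|\vec q|-|\vec r|\le|\vec q|$ decrement steps, each with at most $k$ position choices and at most $s$ direction choices, producing at most $(sk)^{|\vec q|}$ paths of uniformly bounded weight.

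For the lower bound, the hypothesis $\omega_j\neq s+1$ for every $j\le k$ removes all signed contributions, so every admissible $i$-decrement takes place at a position with $\omega_j=i$ and carries a strictly positive weight $(v_{j-1})_i/p_i$; consequently $A(\omega,k)_{\vec q,\vec r}$ is a sum of non-negative terms. An admissible path is then uniquely determined by selecting, independently in each coordinate $i$, the $q_i-r_i$ positions (out of the $m_i$ positions of colour $i$) that host the decrements. Since the values $(v_{j-1})_i$ visited at $i$-decrements must be $q_i,q_i-1,\dots,r_i+1$ in temporal order, each such path has the same weight $\prod_i(q_i!/r_i!)\,p_i^{-(q_i-r_i)}$, so
\[
A(\omega,k)_{\vec q,\vec r}=\prod_{i=1}^s\binom{m_i}{q_i-r_i}\,\frac{q_i!/r_i!}{p_i^{q_i-r_i}}.
\]
Under the hypothesis $m_i>q_i-r_i$ the elementary estimate $\binom{m_i}{q_i-r_i}\ge m_i^{q_i-r_i}/(q_i-r_i+1)^{q_i-r_i-1}$ (with equality attained at $m_i=q_i-r_i+1$), combined with $p_i^{-(q_i-r_i)}\ge 1$, yields the claimed lower bound with a constant $K'$ depending only on $\vec q$.

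I expect the main obstacle to be the combinatorial bookkeeping for the upper bound when the $(s+1)$-steps contribute: a single $(s+1)$-step may host a decrement in any of the $s$ directions and carries a sign, so the counting must proceed through a multinomial-style estimate, and the resulting factorial factors have to be absorbed into the constant $K(\vec q,\vec p)$ without disturbing the polynomial growth in $\tilde m_i$ and $\tilde m_{s+1}$.
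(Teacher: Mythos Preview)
Your proposal is correct and follows the natural approach. The paper does not supply its own proof of this lemma; it simply cites \cite{JS2016}, Lemma~4.8. Your expansion of $A(\omega,k)_{\vec q,\vec r}$ as a sum over monotone lattice paths, together with the observation that an $i$-decrement at step $j$ is only admissible when $\omega_j\in\{i,s+1\}$, is precisely the combinatorial structure underlying the result, and the counting you outline (splitting decrements by colour, bounding binomials by monomials, absorbing factorials into $K(\vec q,\vec p)$) goes through without difficulty.

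Two minor remarks. First, the second upper bound $Kk^{|\vec q|}$ does not follow directly from the first (since the first gives a power $2(|\vec q|-|\vec r|)$ of $k$), so your separate crude path-count argument is indeed needed; you have this right. Second, your explicit constant in the binomial lower bound---the inequality $\binom{m}{d}\ge m^d/(d+1)^{d-1}$---is not literally sharp only at $m=d+1$ (for $d=1$ it is an identity for all $m$), but all that matters is the existence of some $C(d)>0$ with $\binom{m}{d}\ge C(d)\,m^d$ for $m>d$, which is immediate from $\binom{m}{d}\ge (m-d+1)^d/d!$ together with $m-d+1\ge m/(d+1)$ for $m\ge d+1$.
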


\begin{defn}
\label{def:openset}We say that $(f_{i})_{i\in I}$ satisfies the
open set condition if there exists a non-empty bounded open interval
$O\subset\R$ such that $f_{i}^{-1}(O)\subset O$, for all $i\in I$,
and for all $i,j\in I$ with $i\neq j$ we have $f_{i}^{-1}(O)\cap f_{j}^{-1}(O)=\emptyset$. 
\end{defn}

We write $A\le B$ for subsets $A,B\subset\R$ if $a\le b$ for every
$a\in A$ and $b\in B$. 
\begin{rem*}
If $(f_{i})_{i\in I}$ satisfies the open set condition, then we will
always assume that $f_{i}^{-1}(\overline{O})\le f_{j}^{-1}(\overline{O})$
for all $i,j\in I$ with $i<j$.
\end{rem*}
The purpose of the above definitions is the following.
\begin{lem}
\label{lem:functional equation via cocycle}Suppose that $(f_{i})_{i\in I}$
satisfies the open set condition.  Let $k\in\N$, $\omega\in I^{k}$
and $x,y\in f_{\omega}^{-1}(\overline{O})$. Then we have 
$U(x,y)=A_{0}(\overline{\omega},k)U(f_{\omega}(x),f_{\omega}(y))$. Here, we set
$\overline{\omega }:= (\omega _{1}\dots\omega_{k}, \omega _{1}\dots\omega
_{k}\dots)\in \Sigma$.
\end{lem}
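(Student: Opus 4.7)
My plan is to derive the identity from the functional equation of the previous lemma by combining it with the local constancy of each $C_{\vec{n}}$ on $\R\setminus J$, then to induct on $k$. First I would argue that every $C_{\vec{n}}$ is constant on each connected component $U$ of $\R\setminus J$. By Fact~\ref{fact:variespricisely} and Lemma~\ref{lem:unqiueness fixedpoint}, for each $\vec{p}$ the function $T_{\vec{p}}$ takes a common value $c_U(\vec{p})$ on $U$; since by Theorem~\ref{thm:spectralgap} the map $\vec{p}\mapsto T_{\vec{p}}(x)$ is real-analytic near any $\vec{p}_0\in(0,1)^s$, the derivative $C_{\vec{n}}(x)=\partial^{|\vec{n}|}c_U/\partial u_1^{n_1}\cdots\partial u_s^{n_s}|_{\vec{p}}$ is independent of $x\in U$, and continuity of $C_{\vec{n}}$ extends the value to $\overline{U}$.

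For the base case $k=1$, write $\omega=(i_0)$ and fix $x,y\in f_{i_0}^{-1}(\overline{O})$. For each $j\in I\setminus\{i_0\}$, the open set condition gives $f_j^{-1}(O)\cap f_{i_0}^{-1}(O)=\emptyset$, which is equivalent to $f_j(f_{i_0}^{-1}(O))\subset\R\setminus O$; this set is open and, because $J\subset\overline{O}$, lies in a single component of $\R\setminus J$. By the local constancy step, $C_{\vec{m}}$ is therefore constant on $f_j(f_{i_0}^{-1}(\overline{O}))$ for every $\vec{m}$ and every $j\neq i_0$. Now I would evaluate the identity $C_{\vec{n}}=M_{\vec{p}}C_{\vec{n}}+\sum_{i=1}^s n_i(C_{\vec{n}-\vec{e}_i}\circ f_i-C_{\vec{n}-\vec{e}_i}\circ f_{s+1})$ at $x$ and at $y$ and subtract. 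Every summand involving $f_j$ with $j\neq i_0$ vanishes, and a short bookkeeping (treating $i_0\in\{1,\dots,s\}$ and $i_0=s+1$ separately) shows that the surviving terms are precisely the entries of the $\vec{n}$-th row of $A_0(\overline{\omega},1)$ paired with the entries of $U(f_{i_0}(x),f_{i_0}(y))$, giving the claim for $k=1$.

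For general $k$ I would induct on $k$. Writing $\omega=(\omega_1,\omega')$ with $\omega'\in I^{k-1}$, one has $f_\omega^{-1}(\overline{O})\subset f_{\omega_1}^{-1}(\overline{O})$, so the base case yields $U(x,y)=A_0(\overline{\omega},1)\,U(f_{\omega_1}(x),f_{\omega_1}(y))$. Since $f_{\omega_1}(x),f_{\omega_1}(y)\in f_{\omega'}^{-1}(\overline{O})$, the inductive hypothesis gives $U(f_{\omega_1}(x),f_{\omega_1}(y))=A_0(\overline{\omega'},k-1)\,U(f_\omega(x),f_\omega(y))$; multiplying and using that $A_0(\overline{\omega},k)=A_0(\overline{\omega},1)\,A_0(\overline{\omega'},k-1)$ (both sides equal $A_0(\omega_1,1)\cdots A_0(\omega_k,1)$ since $A_0(\cdot,1)$ depends only on the first letter) closes the induction. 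The main subtlety is the cancellation step at $k=1$, which hinges on the local constancy of every $C_{\vec{n}}$ on $\R\setminus J$: this is the one place where I need to upgrade the local constancy of $T_{\vec{p}}$ from Fact~\ref{fact:variespricisely} to all of its higher $\vec{p}$-derivatives via the real-analytic dependence supplied by Theorem~\ref{thm:spectralgap}.
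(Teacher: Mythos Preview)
Your proposal is correct and follows essentially the same approach as the paper. The paper's proof reduces to showing that $u_{\vec{n}}(f_\tau(x),f_\tau(y))=0$ for every $\tau\in I^k$ with $\tau\neq\omega$, which it justifies by noting that $[f_\tau(x),f_\tau(y)]\cap\overline{O}$ has at most one point so $T_{\vec{p}}$ (and hence each $C_{\vec{n}}$) is constant there, then defers to \cite[Lemma 4.7]{JS2016} for the matrix bookkeeping; you organize the same cancellation via induction on $k$ and spell out the $k=1$ computation explicitly, but the key idea---local constancy of every $C_{\vec{n}}$ off $J$, inherited from $T_{\vec{p}}$ via real-analytic dependence on $\vec{p}$---is identical.
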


\begin{proof}
The assertion can be shown as in \cite[Lemma 4.7]{JS2016} if we observe
that $u_{\vec{n}}(f_{\tau}(x),f_{\tau}(y))=0$, for all $\tau\in I^{k}$
with $\tau\neq\omega$ and for all $\vec{n}\in\N_{0}^{s}$. To prove
this, note that by the open set condition we have that $[f_{\tau}(x),f_{\tau}(y)]\cap\overline{O}$
has at most one point. Consequently, $T_{\vec{p}}$ is constant on
$[f_{\tau}(x),f_{\tau}(y)]$ and thus every $C_{\vec{n}}$ is constant
on $[f_{\tau}(x),f_{\tau}(y)]$, $\vec{n}\in\N_{0}^{s}$. 
\end{proof}
Recall that $G=\left\langle f_{1},\dots,f_{s+1}\right\rangle $ and
write $G(x):=\left\{ g(x)\mid g\in G\right\} $.
\begin{lem}
\label{lem:L is perfect}Suppose that $(f_{i})_{i\in I}$ satisfies
the open set condition.  Let $x_{0}\in J$. Then there exist $a,b\in\left(J\cap 
O\right)\setminus G(x_{0})$
 such that $b$ is arbitrarily close
to $a$ and $T_{\vec{p}}(a)\neq T_{\vec{p}}(b)$. 
\end{lem}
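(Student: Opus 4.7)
The plan is to produce $a$ by a cardinality argument that discards countable obstructions, and then to use the full support of $\tilde{\mu}_{\vec{p}}$ on $J$ together with $T_{\vec{p}}=F_{\vec{p}}$ (Lemma \ref{lem:unqiueness fixedpoint}) to force $T_{\vec{p}}(a)\neq T_{\vec{p}}(b)$.

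First I will collect a few countability/uncountability observations. The semigroup $G=\{f_{\omega}:\omega\in I^{*}\}$ is countable, hence so is $G(x_{0})$; and the set $E$ of endpoints of bounded connected components of $\R\setminus J$ is also countable, since $\R$ is second countable. On the other hand $J=\pi(\Sigma)$ is uncountable: $\Sigma$ is uncountable, and under the open set condition in one dimension the overlap points $f_{i}^{-1}(\overline{O})\cap f_{j}^{-1}(\overline{O})$ with $i\neq j$ contribute at most countably many points of $J$ with more than one $\pi$-preimage. Since $J\subset\overline{O}$ and $\partial O$ contains at most two points, $J\cap O$ is uncountable as well. Moreover, for any $\xi=\pi(\omega)\in J$ the cylinders $\pi([\omega_{|k}])$ shrink to $\xi$ as $k\to\infty$, because the inverse branches form a contracting IFS on $\overline{\R}\setminus V$, and each such cylinder is itself uncountable; in particular every neighbourhood of $\xi$ in $\R$ contains uncountably many points of $J$.

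Next I pick $a\in(J\cap O)\setminus(G(x_{0})\cup E)$; this is non-empty since we are removing a countable set from an uncountable one. Given $\epsilon>0$ small enough that $(a-\epsilon,a+\epsilon)\subset O$ (possible because $a\in O$ and $O$ is open), the set $J\cap(a-\epsilon,a+\epsilon)$ is uncountable by the cylinder observation above, and removing the countable set $G(x_{0})\cup\{a\}$ still leaves uncountably many candidates. I take any one of them as $b$. By construction $a,b\in(J\cap O)\setminus G(x_{0})$, $b\neq a$, and $|a-b|<\epsilon$.

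Finally, to verify $T_{\vec{p}}(a)\neq T_{\vec{p}}(b)$, I may assume $a<b$. Lemma \ref{lem:unqiueness fixedpoint} yields $T_{\vec{p}}(b)-T_{\vec{p}}(a)=\tilde{\mu}_{\vec{p}}((a,b])$. If this vanished, then in particular $\tilde{\mu}_{\vec{p}}((a,b))=0$; but any open set $U$ with $U\cap J\neq\emptyset$ has positive $\tilde{\mu}_{\vec{p}}$-mass, since any $\xi=\pi(\omega)\in U\cap J$ yields $\pi([\omega_{|k}])\subset U$ for $k$ large and hence $\tilde{\mu}_{\vec{p}}(U)\ge\mu_{\vec{p}}([\omega_{|k}])=p_{\omega_{|k}}>0$. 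Therefore $(a,b)\cap J=\emptyset$, so $(a,b)$ lies in a single bounded component of $\R\setminus J$, forcing $a$ to be the left endpoint of that component and contradicting $a\notin E$. The only mildly delicate ingredient is the full-support property of $\tilde{\mu}_{\vec{p}}$; the rest is straightforward cardinality bookkeeping, and I do not expect any step to present a serious obstacle.
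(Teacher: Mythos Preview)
Your proof is correct. The approach differs from the paper's in one structural point: the paper first picks $a\in (J\cap O)\setminus G(x_0)$, then uses Fact~\ref{fact:variespricisely} to find a nearby $c\in J$ with $T_{\vec p}(c)\neq T_{\vec p}(a)$, and finally invokes the continuity of $T_{\vec p}$ (Theorem~\ref{thm:spectralgap}) to perturb $c$ to a point $b\notin G(x_0)$ while keeping $T_{\vec p}(b)\neq T_{\vec p}(a)$. You instead strengthen the choice of $a$ by additionally excluding the countable set $E$ of endpoints of bounded gaps of $J$; this guarantees via $T_{\vec p}=F_{\vec p}$ and the full support of $\tilde\mu_{\vec p}$ that \emph{every} $b\in J\setminus\{a\}$ satisfies $T_{\vec p}(b)\neq T_{\vec p}(a)$, so no perturbation step is needed. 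Your route trades the continuity of $T_{\vec p}$ for the identification with the distribution function (Lemma~\ref{lem:unqiueness fixedpoint}), which is arguably more elementary here; the paper's route is slightly shorter once continuity is already in hand.
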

\begin{proof} First recall the well-known fact that $J$ is a non-empty perfect 
subset of $\R$, since $\{ f_{1},\ldots, f_{s+1}\} $
does
not have a common fixed point in $\Bbb{R}$. Thus, every neighborhood of any 
point of  $J$ contains 
uncountably many points in $J$. Let $x_0 \in J$ and  $\epsilon>0$.  Since 
$J\subset \overline{O}$ and $G(x_0)$ is countable, there exists $a\in (J\cap 
O)\setminus G(x_0)$. Since $T_{\vec{p}}$ 
is not locally constant at $a$ and
$T_{p}$ is locally constant on $\overline{\Bbb{R}}\setminus J$ by Fact	
\ref{fact:variespricisely}, there 
exists $c\in B(a,\epsilon/2)\cap J\cap O$ such that $T_{\vec{p}}(c)\neq 
T_{\vec{p}}(a)$. Finally, since $T_{\vec{p}}$ is continuous at $c$ by Theorem 
\ref{thm:spectralgap}, $G(x_0)$ is countable and every neighborhood of $c$ in 
$J$  contains 
uncountably many points, there exists $b \in (B(c,\epsilon/2)\cap J \cap O)\setminus 
G(x_0)$ such that  $T_{\vec{p}}(b)\neq T_{\vec{p}}(a)$. Clearly, we also have 
$b\in B(a,\epsilon)\cap  O$.
\end{proof}


By using Lemma \ref{lem:L is perfect}
we can extend the methods used in \cite{JS2016}. The following lemma is the 
analogue of  \cite[Lemma 4.9]{JS2016}.
\begin{lem}
	\label{lem:growth of solutions}Suppose that $(f_{i})_{i\in I}$ satisfies
	the open set condition.  Let $x_{0}\in J$ and let $\epsilon>0$.
	Let $\vec{n}\in\N_{0}^{s}$ and set 
	$n:=|\vec{n}|:=\sum_{i=1}^{s}n_{i}$.
	Then there exists a constant $K>0$ such that for every $k\in\N$
	there exist points $a_{k}\in B(x_{0},\epsilon)\cap J\setminus\left\{ 
	x_{0}\right\} $
	and $b_{k}\in B(x_{0},\epsilon)\setminus\left\{ x_{0}\right\} $ with
	$u_{\vec{0}}(a_{k},b_{k})\neq0$ such that for $\vec{0}\le\vec{q}\le\vec{n}$,
	\[
	K^{-1}k^{\sum_{i=1}^{s}q_{i}\left(n+1\right)^{i-1}}
	\le\frac{u_{\vec{q}}(a_{k},b_{k})}{u_{\vec{0}}(a_{k},b_{k})}\le 
	Kk^{\sum_{i=1}^{s}q_{i}\left(n+1\right)^{i-1}}.
	\]
\end{lem}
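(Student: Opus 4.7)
The plan is to reduce the estimate to a quantitative statement about the cocycle $A(\omega,k)$ via the functional equation of Lemma~\ref{lem:functional equation via cocycle}, and then to choose a finite word whose letter-multiplicities grow like $k^{(n+1)^{i-1}}$ so that Lemma~\ref{lem:growth of cocycle} delivers matching upper and lower bounds of the desired order. The novel ingredient, compared with the separating-condition treatment of \cite{JS2016}, is the use of Lemma~\ref{lem:L is perfect} to produce target points in $J\cap O$ outside the countable orbit $G(x_{0})$.

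First, I would fix $\omega^{0}\in\Sigma$ with $\pi(\omega^{0})=x_{0}$ and choose $M\in\N$ large enough that $f_{\omega^{0}|_{M}}^{-1}(\overline{O})\subset B(x_{0},\epsilon)$, which is possible by the expansion hypothesis. Using Lemma~\ref{lem:L is perfect}, I would pick $c,d\in(J\cap O)\setminus G(x_{0})$ with $u_{\vec{0}}(c,d)=T_{\vec{p}}(c)-T_{\vec{p}}(d)>0$. For each $k\in\N$, I would set
\[
\tau_{k}:=\underbrace{1\cdots 1}_{\lfloor k\rfloor}\,\underbrace{2\cdots 2}_{\lfloor k^{n+1}\rfloor}\,\cdots\,\underbrace{s\cdots s}_{\lfloor k^{(n+1)^{s-1}}\rfloor},
\]
a finite word containing no letter $s+1$, and define $\omega^{k}:=\omega^{0}|_{M}\tau_{k}$, $a_{k}:=f_{\omega^{k}}^{-1}(c)$, $b_{k}:=f_{\omega^{k}}^{-1}(d)$. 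Since $f_{\tau}^{-1}(\pi(\eta))=\pi(\tau\eta)$ for any finite $\tau$ and any $\eta\in\Sigma$, one obtains $a_{k},b_{k}\in J\cap B(x_{0},\epsilon)$, while $c,d\notin G(x_{0})$ forces $a_{k},b_{k}\neq x_{0}$.

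Combining Lemma~\ref{lem:functional equation via cocycle}, the factorisation $A_{0}=p_{\omega^{k}}A$, and Lemma~\ref{lem:idnm}, I obtain
\[
\frac{u_{\vec{q}}(a_{k},b_{k})}{u_{\vec{0}}(a_{k},b_{k})}=\sum_{\vec{0}\le\vec{r}\le\vec{q}}A(\overline{\omega^{k}},|\omega^{k}|)_{\vec{q},\vec{r}}\cdot\frac{u_{\vec{r}}(c,d)}{u_{\vec{0}}(c,d)},
\]
where the coefficients $u_{\vec{r}}(c,d)/u_{\vec{0}}(c,d)$ are bounded constants independent of $k$ by the continuity of $C_{\vec{r}}$ (Theorem~\ref{thm:spectralgap}). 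Writing $A(\overline{\omega^{k}},|\omega^{k}|)=A(\overline{\omega^{k}},M)\cdot A(\overline{\tau_{k}},|\tau_{k}|)$, whose first factor is bounded and has unit diagonal by Lemma~\ref{lem:idnm}, and applying Lemma~\ref{lem:growth of cocycle} to the second factor with multiplicities $m_{i}\asymp k^{(n+1)^{i-1}}$ for $1\le i\le s$ and $m_{s+1}=0$, I obtain the upper bound
\[
|A(\overline{\omega^{k}},|\omega^{k}|)_{\vec{q},\vec{r}}|\le K\prod_{i=1}^{s}k^{(q_{i}-r_{i})(n+1)^{i-1}}
\]
and, for $\vec{r}=\vec{0}$, a lower bound of the same order, provided $k$ is sufficiently large. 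The summand $\vec{r}=\vec{0}$ is therefore of order $k^{\sum q_{i}(n+1)^{i-1}}$, whereas every $\vec{r}\neq\vec{0}$ summand is smaller by a factor of at least $k^{\min\{(n+1)^{i-1}\colon r_{i}\ge 1\}}\ge k$, so the leading term dominates for large $k$. The finitely many small values of $k$ are absorbed by reusing the points constructed for a fixed large $k_{0}$ and enlarging $K$.

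The main obstacle is the choice of multiplicities: the geometric jump from $(n+1)^{i-1}$ to $(n+1)^{i}$ is forced by the requirement that every subleading summand be smaller than the leading one by at least one factor of $k$, uniformly over all $\vec{q}\le\vec{n}$; a slower growth would allow subleading terms to rival the leading one. A secondary but essential point, absent from the separating-condition case, is that Lemma~\ref{lem:L is perfect} is needed to produce $c,d$ with $u_{\vec{0}}(c,d)\neq 0$ and $c,d\notin G(x_{0})$: under merely the open set condition, one cannot rely on disjoint cylinders to find such a pair by a direct geometric argument.
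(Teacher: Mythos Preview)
Your argument is correct and follows essentially the same route as the paper: both proofs apply Lemma~\ref{lem:L is perfect} to obtain target points $c,d\in(J\cap O)\setminus G(x_{0})$ with $u_{\vec{0}}(c,d)\neq 0$, pull them back through the word $\omega^{0}|_{M}\,1^{k}2^{k^{n+1}}\cdots s^{k^{(n+1)^{s-1}}}$, and then combine Lemma~\ref{lem:functional equation via cocycle} with the upper and lower bounds of Lemma~\ref{lem:growth of cocycle} (the fixed prefix contributing a bounded upper-triangular factor with unit diagonal) to see that the $\vec{r}=\vec{0}$ term dominates. The only cosmetic differences are that the paper applies the functional equation in two steps rather than factoring the cocycle afterwards, and that your floor symbols are redundant since $k\in\N$.
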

\begin{proof}
Let $\epsilon>0$. There exists $\omega\in \Sigma$ such that $\pi(\omega)=x_0$. 
Since $(f_i)_{i\in I}$ is expanding and the open set  $O$ is 
bounded, there exists $r\in \N$ such that, with $\tau:=\omega_1\dots \omega_r 
\in I^r$, we have $\diam(f_\tau^{-1}(\overline{O}))<\epsilon$. By Lemma 
\ref{lem:L is perfect}	there exist $a,b\in (J\cap O)\setminus G(x_0)$, $a\neq 
b$, such that $u_\vec{0}(a,b)=T_{\vec{p}}(a)- T_{\vec{p}}(b)\neq 0$.  For each 
$k\in\N$ and $1\le i\le s$ we set 
$m_{i}(k):=k^{\left(n+1\right)^{i-1}}$ and  
$\xi_{k}:=(1^{m_{1}(k)},2^{m_{2}(k)},\dots,s^{m_{s}(k)})\in 
I^{\sum_{i=1}^{s}m_{i}(k)}$
where $u^{m}:=(u,u,\dots,u)\in I^{m}$ for $u\in\left\{ 1,\dots,s+1\right\} $. 
Then we define $\tilde{a_k}:=f_{\xi_k}^{-1}(a)$, 
$\tilde{b_k}:=f_{\xi_k}^{-1}(b)$ as well as $a_k:=f_{\tau}^{-1}(\tilde{a_k})$, 
$b_k:=f_{\tau}^{-1}(\tilde{b_k})$. By Lemma \ref{lem:functional equation via 
cocycle} and the fact that 
\[
u_{\vec{0}}(a_k,b_k)=p_\tau \cdot u_{\vec{0}}(\tilde{a}_k,\tilde{b}_k)
\] 
it follows that 
\begin{equation}
\label{eq:u0akbk1}
\left(u_{\vec{0}}(a_{k},b_{k})\right)^{-1}U(a_{k},b_{k})=\left(u_{\vec{0}}(\tilde{a}_{k},\tilde{b}_{k})\right)^{-1}A(\overline{\tau},r)U(\tilde{a}_{k},\tilde{b}_{k}).
\end{equation} 
Similarly, we obtain that
\begin{equation}
\label{eq:u0akbk2}
\left(u_{\vec{0}}(\tilde{a}_{k},\tilde{b}_{k})\right)^{-1}U(\tilde{a}_{k},\tilde{b}_{k})=\left(u_{\vec{0}}(a,b)\right)^{-1}A\left(\overline{\xi_{k}},\sum_{i=1}^{s}m_{i}(k)\right)U(a,b).
\end{equation} 

By combining the previous two equalities 
(\ref{eq:u0akbk1}) and (\ref{eq:u0akbk2}) we have
\[
\left(u_{\vec{0}}(a_{k},b_{k})\right)^{-1}U(a_{k},b_{k})=\left(u_{\vec{0}}(a,b)\right)^{-1}A(\overline{\tau},r)
A\left(\overline{\xi_{k}},\sum_{i=1}^{s}m_{i}(k)\right)U(a,b).
\]

Since $\xi_k \in \{1,\dots ,s\}^*$ and $u_{\vec{0}}(a,b)\neq 0$ it follows from 
Lemmas 
\ref{lem:idnm} and   \ref{lem:growth of 
cocycle}  that for $\vec{q}\le \vec{n}$,
 \begin{eqnarray*}
	\left(A\left(\overline{\xi_{k}},\sum_{i=1}^{s}m_{i}(k)\right)U(a,b)\right)_{\vec{q}}
	& \asymp & \prod_{i=1}^{s}(m_{i}(k))^{q_{i}}\asymp 
	k^{\sum_{i=1}^{s}q_{i}\left(n+1\right)^{i-1}} 
	\mbox{ as }k\rightarrow \infty ,
\end{eqnarray*}
where for any two non-negative functions $\phi _{1}(k)$ and $\phi _{2}(k)$ 
of $k\in \Bbb{N}$, we write  
$\phi _{1}(k)\asymp \phi _{2}(k)$ as $k\rightarrow \infty $ 
if there exists a constant $D>1$ such that 
$D^{-1}\phi _{2}(k)\leq \phi _{1}(k)\leq D\phi _{2}(k)$ for every 
$k\in  \Bbb{N}.$ 
From this and Lemma \ref{lem:idnm} we conclude that as $k\rightarrow \infty$,

$$\left(A(\overline{\tau },r)A\left(\overline{\xi _{k}},\sum 
_{i=1}^{s}m_{i}(k)\right)U(a,b)\right)_{\vec{q}}
=\sum _{\vec{r}\leq \vec{q}}
A(\overline{\tau },r)_{\vec{q},\vec{r}}\left(A\left(\overline{\xi _{k}},\sum 
_{i=1}^{s}m_{i}(k)\right)U(a,b)\right)_{\vec{r}}\asymp 
k^{\sum_{i=1}^{s}q_{i}\left(n+1\right)^{i-1}}. $$ 
The proof is complete.
\end{proof}

We are now in the position to derive the following key lemma. Since the proof 
follows closely the arguments given in \cite[Lemma 5.2]{JS2016} we comment only 
on the necessary modifications.   Note that in 
\cite[Lemma 5.2]{JS2016}
the Julia set $J_{\omega}$ should be replaced by $J(G)$. 
\begin{lem}
\label{lem:good points are dense} Suppose that $(f_{i})_{i\in I}$
satisfies the open set condition.  Let  
$C=\sum_{\vec{n} }\beta_{\vec{n}}C_{\vec{n}}\in\T$
be non-trivial. Let $j(k)\rightarrow\infty$ be a sequence of positive
integers. Let $\omega\in\Sigma$. For every $x\in J$ and for any non-empty 
neighbourhood $V$ of $x$
in $\R$ there exist $a,b\in V\cap O$ with
$a\neq b$ such that 
\[
\eta:=\limsup_{k\rightarrow\infty}\left|\sum_{\vec{m}}\sum_{\vec{n}}\beta_{\vec{n}}A(\omega,j(k))_{\vec{n},\vec{m}}u_{\vec{m}}(a,b)\right|\in(0,\infty].
\]
\end{lem}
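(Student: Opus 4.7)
My plan is to adapt the strategy of \cite[Lemma 5.2]{JS2016}, using Lemma \ref{lem:L is perfect} and Lemma \ref{lem:growth of solutions} to supply the pairs that the separating condition provided there. Let me describe the key steps.

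First I rewrite the expression inside the limsup by swapping the order of summation:
\[
S_k(a,b) \;=\; \sum_{\vec{m}} \gamma_{\vec{m}}(k)\, u_{\vec{m}}(a,b), \qquad \gamma_{\vec{m}}(k) \;:=\; \sum_{\vec{n}} \beta_{\vec{n}}\, A(\omega, j(k))_{\vec{n},\vec{m}}.
\]
Let $\mathcal{I} := \{\vec{n} : \beta_{\vec{n}} \neq 0\}$, which is finite and non-empty by non-triviality. By Lemma \ref{lem:idnm}, $\gamma_{\vec{m}}(k)$ is supported on the finite set of $\vec{m}$ with $\vec{m} \le \vec{n}$ for some $\vec{n} \in \mathcal{I}$, and for any $\le$-maximal element $\vec{n}_0 \in \mathcal{I}$ we have $\gamma_{\vec{n}_0}(k) = \beta_{\vec{n}_0} \neq 0$ for every $k$. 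By Lemma \ref{lem:growth of cocycle} each $\gamma_{\vec{m}}(k)$ grows at most polynomially in $k$, so by a diagonal extraction I pass to a subsequence $(k_\ell)$ along which $\gamma^*_{\vec{m}} := \lim_{\ell} k_\ell^{-E} \gamma_{\vec{m}}(k_\ell)$ exists in $\R$ for every relevant $\vec{m}$, where $E \ge 0$ is the maximal polynomial growth rate attained among the $\gamma_{\vec{m}}$. The vector $\gamma^* := (\gamma^*_{\vec{m}})$ is non-zero: if $E = 0$ then $\gamma^*_{\vec{n}_0} = \beta_{\vec{n}_0} \neq 0$, whereas if $E > 0$ then by choice of $E$ some coordinate attains this growth rate with a nonzero limit. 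Along this subsequence
\[
k_\ell^{-E} S_{k_\ell}(a,b) \;\to\; L(a,b) \;:=\; \tilde C(a) - \tilde C(b), \qquad \tilde C := \sum_{\vec{m}} \gamma^*_{\vec{m}} C_{\vec{m}} \in \T.
\]

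It therefore suffices to exhibit $a, b \in V \cap O$ with $L(a,b) \neq 0$: then $\limsup_k |S_k(a,b)| \ge \limsup_\ell |S_{k_\ell}(a,b)|$ equals $\infty$ if $E > 0$ and $|L(a,b)| > 0$ if $E = 0$, yielding $\eta \in (0,\infty]$. To produce such a pair I apply Lemma \ref{lem:growth of solutions} with $x_0 := x$, with $\epsilon > 0$ small enough that $B(x,\epsilon) \subset V$, and with $\vec{n}$ chosen componentwise to dominate every index in the support of $\gamma^*$. The resulting pairs $(a_\ell, b_\ell)$ lie in $V$ by construction; moreover they lie in $O$, since in the proof of Lemma \ref{lem:growth of solutions} they arise as $a_\ell = f_\tau^{-1}(f_{\xi_\ell}^{-1}(a))$ and $b_\ell = f_\tau^{-1}(f_{\xi_\ell}^{-1}(b))$ with $a,b \in O$, and the open set condition $f_i^{-1}(O) \subset O$ gives $a_\ell, b_\ell \in O$. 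They satisfy $u_{\vec{0}}(a_\ell,b_\ell) \neq 0$ and
\[
\frac{u_{\vec{q}}(a_\ell,b_\ell)}{u_{\vec{0}}(a_\ell,b_\ell)} \;\asymp\; \ell^{\sum_{i=1}^{s} q_i (|\vec{n}|+1)^{i-1}}, \qquad \vec{0} \le \vec{q} \le \vec{n}.
\]
Since $\vec{q} \mapsto \sum_{i} q_i (|\vec{n}|+1)^{i-1}$ is injective on $\{\vec{q} \le \vec{n}\}$ (base-$(|\vec{n}|+1)$ digit expansion), distinct components of $(u_{\vec{q}}(a_\ell,b_\ell))_{\vec{q}}$ grow at distinct polynomial rates in $\ell$. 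Letting $\vec{q}^*$ maximise $\sum_i q_i (|\vec{n}|+1)^{i-1}$ over the support of $\gamma^*$, the $\vec{q}^*$-term dominates $L(a_\ell,b_\ell)/u_{\vec{0}}(a_\ell,b_\ell)$, so $L(a_\ell,b_\ell) \neq 0$ for all sufficiently large $\ell$. Choosing one such $\ell_0$ and setting $a := a_{\ell_0}$, $b := b_{\ell_0}$ finishes the proof.

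The main obstacle will be the bookkeeping in the diagonal subsequence extraction: one must identify the exponent $E$ carefully, ensuring simultaneously that $\gamma^*$ is nonzero and that its support is contained in the range of indices $\vec{q}$ for which Lemma \ref{lem:growth of solutions} provides quantitative growth control. Once this is in place, the distinct-exponent structure of Lemma \ref{lem:growth of solutions} makes the final dominant-term argument essentially algebraic.
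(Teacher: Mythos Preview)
Your proposal is essentially correct and follows the same strategy as the paper: normalise the vector $\gamma(k)=(\gamma_{\vec m}(k))_{\vec m}$ to extract a nonzero limit $\gamma^*$, and then use the distinct polynomial growth rates supplied by Lemma~\ref{lem:growth of solutions} to show that the associated linear form $\sum_{\vec m}\gamma^*_{\vec m}u_{\vec m}(\cdot,\cdot)$ cannot vanish on all nearby pairs. The paper carries this out by contradiction, normalising by $\Vert\gamma(k)\Vert$ and invoking the Vandermonde-type invertibility of the matrix $(u_{\vec q}(a_{\vec r},b_{\vec r}))_{\vec q,\vec r}$ built from several pairs, whereas you argue directly with a single sequence of pairs and a dominant-term estimate; both routes rest on the same injectivity of $\vec q\mapsto\sum_i q_i(|\vec n|+1)^{i-1}$.

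One slip to fix: the cocycle bound in Lemma~\ref{lem:growth of cocycle} gives $|A(\omega,j(k))_{\vec n,\vec m}|\le K\,j(k)^{|\vec n|}$, so $\gamma_{\vec m}(k)$ is polynomial in $j(k)$, not in $k$. If $j(k)$ grows super-polynomially in $k$ your normalisation $k_\ell^{-E}$ fails. Replace it by $j(k_\ell)^{-E}$, or---more robustly and as the paper does---simply normalise by $\Vert\gamma(k)\Vert$ and use compactness of the unit sphere; your observation that $\gamma_{\vec n_0}(k)=\beta_{\vec n_0}\neq0$ already guarantees $\Vert\gamma(k)\Vert\ge|\beta_{\vec n_0}|>0$, so any accumulation point has norm one. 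With this correction the rest of your argument goes through unchanged.
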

\begin{proof}
There exists $\vec{n}_{\max}\in \N_0^s$ such that 
$\beta_{\vec{n}}=0$ for all 
$\vec{n}\ge \vec{n}_{\max}$. By Lemma~\ref{lem:idnm}, it is easy to see that the matrix 
$\left(A(\omega,j(k))_{\vec{n},\vec{m}}\right)_{\vec{n}\le\vec{n}_{\max},\vec{m}\le\vec{n}_{\max}}$
is invertible.  Since 
$\left(\beta_{\vec{n}}\right)_{\vec{n}\le\vec{n}_{\max}}\neq0$
we conclude that, for all $k\in\N$, 
\[
\lambda(k):=\left(\lambda_{\vec{m}}(k)\right)_{\vec{m}\le\vec{n}_{\max}}:=\left(\sum_{\vec{n}\le\vec{n}_{\max}}\beta_{\vec{n}}
A(\omega,j(k))_{\vec{n},\vec{m}}\right)_{\vec{m}\le\vec{n}_{\max}}\neq 0.
\]
Let $\epsilon >0$ and suppose by way of contradiction that $\eta=0$ for all 
$a,b\in 
B(x_{0},\epsilon)\setminus 
\{ x_{0}\} $ 
with $a\neq b.$ Proceeding exactly as in the proof of \cite[Lemma 5.2]{JS2016} 
one defines 
$\lambda:=\left(\lambda_{\vec{m}}\right)_{\vec{m}\le\vec{n}_{\max}}$ as a 
limit point of the sequence $(\lambda(k)/\Vert \lambda(k)\Vert)_{k\ge 1}$ and 
observes that   $\Vert\lambda\Vert=1$ and  
\begin{equation} \label{eq:contradiction}
\sum_{\vec{m}\le\vec{n}_{\max}}\lambda_{\vec{m}}u_{\vec{m}}(a,b)=0,  
\mbox{\ for all }a,b\in B(x_{0},\epsilon )\setminus \{ x_{0}\}. 
\end{equation}
To derive the desired contradiction one verifies that there exist 
$(a_{\vec{r}})_{\vec{r} \le \vec{n}_{\max} },(b_{\vec{r}})_{\vec{r} \le 
\vec{n}_{\max} }$ with $a_{\vec{r}}, b_{\vec{r}} \in B(x_0,\epsilon)\setminus 
\{x_0\}$, for every $ \vec{r} \le 
\vec{n}_{\max} $ such that the matrix 
\[
\left(u_{\vec{q}}(a_{\vec{r}},b_{\vec{r}})\right)_{\substack{\vec{r}\le\vec{n}_{\max}\\
		\vec{q}\le\vec{n}_{\max}
	}
}
\]
is invertible. Hence, it follows from (\ref{eq:contradiction})  that 
$\lambda=0$ 
contradicting $ \Vert \lambda \Vert=1$.  The existence of the vectors 
$(a_{\vec{r}})_{\vec{r} \le 
\vec{n}_{\max} }$  and $(b_{\vec{r}})_{\vec{r} \le \vec{n}_{\max} }$ as stated 
above is 
demonstrated in \cite[Proposition 4.11]{JS2016}. The key is to combine Lemma 
\ref{lem:growth of solutions} with the idea of the Vandermonde determinant (see 
also \cite[Lemma 4.10]{JS2016}). 
\end{proof}

\subsection{Dynamical characterisation of pointwise H\"older exponents \label{subsec:Dynamical-characterisation-of}}

In this section we provide a dynamical characterisation of the pointwise
H\"older exponent of non-trivial elements of $\T$. The\emph{ pointwise
H\"older exponent} of $C$ at $x\in\R$ is defined as 
\[
\Hol(C,x):=\sup\left\{ \alpha>0\mid\limsup_{y\rightarrow 
x}\frac{\left|C(y)-C(x)\right|}{\left|y-x\right|^{\alpha}}<\infty\right\}\in 
[0,\infty] .
\]
We remark that it is possible that $\Hol(C,x)>1$. In this case,  $C$ is 
differentiable at $x$ and $C'(x)=0$.  In fact, in many examples there exists a 
set $A\subset J$ of positive Hausdorff dimension such that for every $x\in A$ 
we have 
$\Hol(C,x)>1$. We 
also note that if  $\Hol(C,x)<1$ then $C$ is not differentiable at $x$. There 
exist examples for which there exist  sets $A,B\subset J$ of positive Hausdorff 
dimension such that for each $x\in A$ we have $\Hol(C,x)>1$ whereas for each 
$x\in B$ we have $\Hol(C,x)<1$. For example, let $s=1$ and suppose that 
$(f_1,f_2)$ satisfies the open set condition. For  $p_1>0$ sufficiently close 
to zero we have that $\alpha_-<1$  and $\alpha_+>1$ by 
(\ref{defalphaplusminus}).  
Then the existence of the sets $A,B$ as stated above follows from Theorem  
\ref{thm:multifrac}.

By  \cite[Lemma 5.1]{JS13b}) we have 
for every $x\in\R$, 
\begin{equation} 
\Hol(C,x)=\liminf_{r\rightarrow0}\frac{\log\sup_{y\in B(x,r)}\left|C(y)-C(x)\right|.}{\log r}.\label{eq:hoelder exponent as liminf}
\end{equation}
\begin{rem}
	In the proof of  \cite[Lemma 5.1]{JS13b}) we have demonstrated that 
\begin{equation*} \label{eq:hoelder exponent as liminf variant}
	\Hol(C,x)=\liminf_{y\rightarrow 
	x}\frac{\log\left|C(x)-C(y)\right|}{\log\left|x-y\right|}.
\end{equation*}
	From this, it is straightforward to derive (\ref{eq:hoelder exponent as 
	liminf}). For the convenience of the reader, we give the details. For a 
	bounded function $h:V \rightarrow \R$ defined on some domain $V\subset \R$ 
	and $x\in V$ we will show that 
	\begin{equation} \label{eq:liminf1}
\liminf_{y\rightarrow 
	x}\frac{\log\left|h(x)-h(y)\right|}{\log\left|x-y\right|}=
\liminf_{r\rightarrow0}\frac{\log\sup_{y\in	B(x,r)}\left|h(y)-h(x)\right|}{\log 
r}.	
\end{equation}
Denote the left-hand side of (\ref{eq:liminf1}) by $H$, the right-hand side 
of (\ref{eq:liminf1}) by $H'$. We will show that $H=H'$. First observe that if 
$h$ is not continuous at $x$ then $H=H'=0$. Now, assume that $h$ is continuous 
at $x$.
Let $(y_{n})$ be a 
sequence with $y_{n}\neq x$, and $y_{n}\rightarrow 
x$
as $n\rightarrow\infty$ such that 
\[
\lim_{n\rightarrow 
\infty}\frac{\log\left|h(y_{n})-h(x)\right|}{\log\left|y_{n}-x\right|}=H.
\]
We may assume that $r_{n}:=\left|y_{n}-x\right|<1$
for all $n\ge1$.  It follows that 
\[
H'\le \liminf_{n\rightarrow 
\infty}\frac{\log\sup_{y\in B(x,r_{n})}\left|h(y)-h(x)\right|}{\log 
r_{n}}\le\liminf_{n\rightarrow 
\infty}\frac{\log\left|h(y_{n})-h(x)\right|}{\log\left|y_{n}-x\right|}=H.
\]
To prove the reverse inequality, let $(r_{n})$
be a sequence with $r_{n}>0$ and $r_{n}\rightarrow0$, as $n\rightarrow\infty$
such that 
\[
\lim_{n\rightarrow\infty}\frac{\log\sup_{y\in B(x,r_{n})}
	\left|h(y)-h(x)\right|}{\log 
r_{n}}= H'.
\]
Then there exists a sequence $(y_{n})$ with $y_{n}\in B(x,r_{n})$
such that 
\[
\lim_{n\rightarrow\infty}\frac{\log\left|h(y_{n})-h(x)\right|}{\log r_{n}}= H'.
\]
Since $h$ is continuous at $x$ we may assume that 
$\log\left|h(y_{n})-h(x)\right|<0$
for all $n\ge1$. Hence, we have 
\[
H\le \liminf_{n\rightarrow \infty} 
\frac{\log\left|h(y_{n})-h(x)\right|}{\log\left|y_{n}-x\right|}
	\le \liminf_{n\rightarrow \infty}\frac{\log\left|h(y_{n})-h(x)\right|}{\log
 r_{n}}= H'.
\]
This completes the proof of $H=H'$. 
\end{rem}
We proceed with  an upper bound for the pointwise
H\"older exponent.	
\begin{prop}
\label{prop:upperbound pointwise hoelder}Suppose that $(f_{i})_{i\in I}$
satisfies the open set condition.  Let $C=\sum_{\vec{n}}\beta_{\vec{n}}C_{\vec{n}}\in\T$
be non-trivial. For every $x\in J$ we have 
\[
\Hol(C,x)\le\inf_{\omega\in\pi^{-1}(x)}\liminf_{n\rightarrow\infty}\frac{S_{n}\psi(\omega)}{S_{n}\varphi(\omega)}.
\]
\end{prop}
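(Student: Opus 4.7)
I plan to prove the inequality by constructing, for each $\omega\in\pi^{-1}(x)$, an explicit sequence $z_\ell\to x$ whose H\"older quotient is controlled by $\beta(\omega):=\liminf_{n\to\infty}S_n\psi(\omega)/S_n\varphi(\omega)$, and then taking the infimum over $\omega$. Select a subsequence $(n_k)$ realising $\beta(\omega)$ as the limit of $S_{n_k}\psi(\omega)/S_{n_k}\varphi(\omega)$. Since $|f'_{\omega_{|n}}(x)|=e^{-S_n\varphi(\omega)}$ by the chain rule, bounded distortion (Lemma~\ref{lem:bounded distortion} applied with $\Omega=O$) yields a constant $C_1$ such that $\diam f_{\omega_{|n}}^{-1}(\overline{O})\le C_1 e^{S_n\varphi(\omega)}\to0$; in particular, any point in $f_{\omega_{|n}}^{-1}(\overline{O})$ is at distance at most $C_1 e^{S_n\varphi(\omega)}$ from $x=\pi(\omega)$.

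Pick any $x_0\in J\cap O$ and apply Lemma~\ref{lem:good points are dense} with $V=O$, the above $\omega$, and $j(k)=n_k$. This yields a single pair $(a,b)$ of distinct points of $O$ and a further subsequence $(n_{k_\ell})$ along which
\[
\Bigl|\sum_{\vec{m}}\Bigl(\sum_{\vec{n}}\beta_{\vec{n}}A(\omega,n_{k_\ell})_{\vec{n},\vec{m}}\Bigr)u_{\vec{m}}(a,b)\Bigr|\ge\eta_0
\]
for some $\eta_0>0$. Set $y_\ell^1:=f_{\omega_{|n_{k_\ell}}}^{-1}(a)$ and $y_\ell^2:=f_{\omega_{|n_{k_\ell}}}^{-1}(b)$; both lie in $f_{\omega_{|n_{k_\ell}}}^{-1}(\overline{O})$, which also contains $x$. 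Because $A_0(\overline{\omega_{|n}},n)=p_{\omega_{|n}}A(\overline{\omega_{|n}},n)$ and the cocycle $A(\overline{\omega_{|n}},n)$ depends only on $\omega_1,\dots,\omega_n$ and hence equals $A(\omega,n)$, Lemma~\ref{lem:functional equation via cocycle} applied to the pair $(y_\ell^1,y_\ell^2)$ and the linear combination $C=\sum_{\vec{n}}\beta_{\vec{n}}C_{\vec{n}}$ gives
\[
C(y_\ell^1)-C(y_\ell^2)=p_{\omega_{|n_{k_\ell}}}\sum_{\vec{m}}\Bigl(\sum_{\vec{n}}\beta_{\vec{n}}A(\omega,n_{k_\ell})_{\vec{n},\vec{m}}\Bigr)u_{\vec{m}}(a,b),
\]
so that $|C(y_\ell^1)-C(y_\ell^2)|\ge\eta_0 e^{S_{n_{k_\ell}}\psi(\omega)}$.

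The triangle inequality now picks out $z_\ell\in\{y_\ell^1,y_\ell^2\}\setminus\{x\}$ with $|C(x)-C(z_\ell)|\ge(\eta_0/2)e^{S_{n_{k_\ell}}\psi(\omega)}$, while the diameter estimate gives $|x-z_\ell|\le C_1 e^{S_{n_{k_\ell}}\varphi(\omega)}$, so $z_\ell\to x$. For $\ell$ large, both $\log|C(x)-C(z_\ell)|$ and $\log|x-z_\ell|$ are negative, and dividing the lower bound on the numerator by the (negative) upper bound on the denominator reverses the inequality to give
\[
\frac{\log|C(x)-C(z_\ell)|}{\log|x-z_\ell|}\le\frac{\log(\eta_0/2)+S_{n_{k_\ell}}\psi(\omega)}{\log C_1+S_{n_{k_\ell}}\varphi(\omega)},
\]
and the right-hand side tends to $\beta(\omega)$ as $\ell\to\infty$ since $S_{n_{k_\ell}}\varphi(\omega),S_{n_{k_\ell}}\psi(\omega)\to-\infty$ absorb the additive constants. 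Invoking the $\liminf$ characterisation of $\Hol(C,x)$ recorded in the remark after~(\ref{eq:hoelder exponent as liminf}) yields $\Hol(C,x)\le\beta(\omega)$, and taking the infimum over $\omega\in\pi^{-1}(x)$ completes the proof.

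The main obstacle is the existence of a single pair $(a,b)\in O$ for which $\sum_{\vec{m},\vec{n}}\beta_{\vec{n}}A(\omega,n_{k_\ell})_{\vec{n},\vec{m}}u_{\vec{m}}(a,b)$ does not decay along a subsequence, uniformly in $\ell$; this is precisely Lemma~\ref{lem:good points are dense}, whose content rests on the Vandermonde-type non-degeneracy supplied by Lemma~\ref{lem:growth of solutions} together with the non-triviality of $C$ and the open set condition (needed for the cocycle equation). Once this lemma is available, the remainder of the argument is a direct combination of bounded distortion with the triangle inequality.
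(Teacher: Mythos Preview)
Your proof is correct and follows essentially the same route as the paper: fix $\omega\in\pi^{-1}(x)$, pass to a subsequence realising the $\liminf$, invoke Lemma~\ref{lem:good points are dense} to obtain a good pair $(a,b)\in O$, pull back via $f_{\omega_{|n_{k_\ell}}}^{-1}$, and compute $C(y_\ell^1)-C(y_\ell^2)$ through Lemma~\ref{lem:functional equation via cocycle}. The only notable difference is in the final passage from the two-point estimate to the H\"older exponent at $x$: the paper cites \cite[Proof of Lemma~5.3, (5.4)--(5.6)]{JS2016}, whereas you carry this out directly via the triangle inequality to select $z_\ell\in\{y_\ell^1,y_\ell^2\}$ with $|C(x)-C(z_\ell)|\ge(\eta_0/2)\,p_{\omega_{|n_{k_\ell}}}$, which makes the argument self-contained. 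A minor cosmetic difference is that the paper takes $x_0$ to be an accumulation point of $f_{\omega_{|j_k}}(x)$ before applying Lemma~\ref{lem:good points are dense}, while you pick $x_0\in J\cap O$ arbitrarily; for your triangle-inequality endgame this choice is irrelevant, so your simplification is harmless.
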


\begin{proof}
Let $x\in J$ and $\omega\in\pi^{-1}(x)$. Since $J$ is compact,
there exists a sequence $(j_{k})$ tending to infinity and $x_{0}\in J$
such that 
\[
\alpha:=\liminf_{n\rightarrow\infty}\frac{S_{n}\psi(\omega)}{S_{n}\varphi(\omega)}=\lim_{k\rightarrow\infty}\frac{S_{j_{k}}\psi(\omega)}{S_{j_{k}}\varphi(\omega)}\quad\text{and}\quad\lim_{k\rightarrow\infty}f_{\omega_{|j_{k}}}(x)=x_{0}\in J.
\]
By Lemma \ref{lem:good points are dense} we may assume that there exist 
$\epsilon>0$,
$\eta_{0}>0$ and points $a,b\in B(x_{0},\epsilon)\cap O$ with
$a\neq b$ such that for all $k$ sufficiently large, 
\[
\left|\sum_{\vec{m}}\sum_{\vec{n}}\beta_{\vec{n}}A(\omega,j(k))_{\vec{n},\vec{m}}u_{\vec{m}}(a,b)\right|\ge\eta_{0}>0.
\]
Define $y_{k}:=(f_{\omega_{|j_{k}}})^{-1}(a)$ and $z_{k}:=(f_{\omega_{|j_{k}}})^{-1}(b)$.
By Lemma \ref{lem:functional equation via cocycle} we have 
\begin{align*}
C(y_{k})-C(z_{k}) & =\sum_{\vec{n}}\beta_{\vec{n}}\left(U(y_{k},z_{k})\right)_{\vec{n}}=p_{\omega_{|j(k)}}\sum_{\vec{n}}\beta_{\vec{n}}\left(A(\omega,j(k))U(a,b)\right)_{\vec{n}}.
\end{align*}
Hence, we can estimate 
\[
\liminf_{k\rightarrow\infty}\frac{\log|C(y_{k})-C(z_{k})|}{\log|y_{k}-z_{k}|}\le\liminf_{k\rightarrow\infty}\frac{S_{j(k)}\psi(\omega)+\log\eta_{0}}{S_{j(k)}\varphi(\omega)}=\alpha.
\]
Finally, proceeding exactly as in \cite[Proof of Lemma 5.3 (5.4), (5.5) and 
(5.6)]{JS2016},
the statement of the lemma follows.
\end{proof}
Since $\Hol(C,x)<\infty$ by Proposition \ref{prop:upperbound pointwise hoelder},
we can conclude the following.
\begin{cor}
Suppose that $(f_{i})_{i\in I}$ satisfies the open set condition.
Let $C\in\T$ be non-trivial.
Then $C$ is not locally constant at any point of $J$ and thus $\T$
is the direct sum of vector spaces $\oplus_{\vec{n}\in\N_{0}^{s}}\R C_{\vec{n}}$.
\end{cor}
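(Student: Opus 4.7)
The plan is to deduce both assertions directly from Proposition \ref{prop:upperbound pointwise hoelder} together with a uniform finiteness of the bound it supplies. The key preparatory observation I would record is that the right-hand side of that inequality is automatically finite: since $I$ is finite, $|\psi(\omega)|\le M_{\psi}:=\max_{i\in I}|\log p_{i}|<\infty$ for every $\omega\in\Sigma$, and since $(f_{i})_{i\in I}$ is expanding with $f_{i}'\ge\lambda>1$ on $\R$, we have $|\varphi(\omega)|\ge\log\lambda>0$ for every $\omega\in\Sigma$. Consequently
\[
\frac{|S_{n}\psi(\omega)|}{|S_{n}\varphi(\omega)|}\le\frac{M_{\psi}}{\log\lambda},\qquad\omega\in\Sigma,\ n\in\N,
\]
so Proposition \ref{prop:upperbound pointwise hoelder} yields $\Hol(C,x)\le M_{\psi}/\log\lambda<\infty$ for every $x\in J$ and every non-trivial $C\in\T$.

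For the first assertion I would argue by contradiction. Assume some non-trivial $C\in\T$ is locally constant at some $x_{0}\in J$. Then $C(y)=C(x_{0})$ on a neighbourhood of $x_{0}$, so $\sup_{y\in B(x_{0},r)}|C(y)-C(x_{0})|=0$ for all sufficiently small $r>0$, and the characterisation \eqref{eq:hoelder exponent as liminf} forces $\Hol(C,x_{0})=+\infty$, contradicting the finite upper bound just established.

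For the direct-sum assertion I would suppose $\sum_{\vec{n}}\beta_{\vec{n}}C_{\vec{n}}\equiv 0$ for some $(\beta_{\vec{n}})_{\vec{n}}\neq 0$ in which only finitely many entries are non-zero. By the definition of non-triviality this equality exhibits the zero function as a non-trivial element of $\T$; but the zero function is locally constant at every point of $\R$, and in particular at every point of $J$, contradicting the first assertion. Hence $\{C_{\vec{n}}\}_{\vec{n}\in\N_{0}^{s}}$ is linearly independent over $\R$, which is exactly the statement $\T=\bigoplus_{\vec{n}\in\N_{0}^{s}}\R C_{\vec{n}}$.

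There is no real obstacle to overcome: the serious content has already been packaged into Proposition \ref{prop:upperbound pointwise hoelder}, and the only extra ingredients are the crude uniform upper bound on the Birkhoff ratio coming from the expanding hypothesis together with the finiteness of $I$, and the tautology that local constancy entails infinite pointwise H\"older exponent.
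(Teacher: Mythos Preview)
Your proof is correct and follows essentially the same approach as the paper, which simply states ``Since $\Hol(C,x)<\infty$ by Proposition \ref{prop:upperbound pointwise hoelder}, we can conclude the following'' and leaves all the details you supplied to the reader. Your explicit verification that the Birkhoff ratio is uniformly bounded by $M_{\psi}/\log\lambda$, and your unpacking of the direct-sum consequence via the definition of non-triviality, are exactly the steps implicit in the paper's one-line justification.
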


Next we provide lower bounds for the pointwise H\"older exponent.
In the following proposition we define for $\omega\in\Sigma$ and
$x=\pi(\omega)$ the sequences
\[
\delta_{n}:=\delta_{n}(\omega):=d\left(f_{\omega_{|n}}(x),\partial O\right),\quad\text{and }s_{n}:=s_{n}(\omega):=\delta_{n}\cdot\left|f_{\omega_{|n}}'(x)\right|^{-1}\cdot D^{-1},
\]
where $\partial O$ refers to the boundary of the open interval $O$
and $D=D(O)$ refers to the bounded distortion constant in Lemma \ref{lem:bounded distortion}.
\begin{prop}
\label{prop:lower bound} Suppose that $(f_{i})_{i\in I}$ satisfies
the open set condition.  Let $C\in\T$ be non-trivial. Let $\omega\in\Sigma$
be a sequence which is not eventually constant and let $x=\pi(\omega)\in J$.
Then we have 
\[
\Hol(C,x)\cdot\left(1+\frac{\liminf_{n\rightarrow\infty}n^{-1}\log\delta_{n}}{\max\varphi}\right)\ge\liminf_{n\rightarrow\infty}\frac{S_{n}\psi(\omega)}{S_{n}\varphi(\omega)}.
\]
 
\end{prop}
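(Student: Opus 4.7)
The strategy is to obtain an upper bound for $\sup_{y\in B(x,r)}|C(y)-C(x)|$ by iterating the functional equation from Lemma \ref{lem:functional equation via cocycle}, with the scale $s_n$ tailor-made so that $B(x,s_n)$ lies inside $I_n:=f_{\omega_{|n}}^{-1}(\overline{O})$. Since each $f_i$ is a diffeomorphism of $\R$, $I_n$ is a closed interval containing $x$ (because $f_{\omega_{|n}}(x)=\pi(\sigma^n\omega)\in J\subset\overline{O}$), and its endpoints are mapped to $\partial O$. The Bounded Distortion Lemma \ref{lem:bounded distortion} applied to an endpoint $e$ of $I_n$ gives
\[
\delta_n\le|f_{\omega_{|n}}(e)-f_{\omega_{|n}}(x)|\le D\,|f_{\omega_{|n}}'(x)|\cdot|e-x|,
\]
whence $|e-x|\ge s_n$, and therefore $B(x,s_n)\subset I_n$.

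Writing $C=\sum_{\vec{n}\le\vec{n}_{\max}}\beta_{\vec{n}}C_{\vec{n}}$, Lemma \ref{lem:functional equation via cocycle} yields $U(x,y)=A_{0}(\overline{\omega_{|n}},n)\,U(f_{\omega_{|n}}(x),f_{\omega_{|n}}(y))$ for every $y\in B(x,s_n)$, where $\overline{\omega_{|n}}\in\Sigma$ denotes the periodic extension of $\omega_{|n}$. Using $A_{0}=p_{\omega_{|n}}A$, the polynomial bound $|A(\,\cdot\,,n)_{\vec{n},\vec{m}}|\le Kn^{|\vec{n}_{\max}|}$ from Lemma \ref{lem:growth of cocycle}, and $\|C_{\vec{m}}\|_{\infty}<\infty$, I deduce
\[
\sup_{y\in B(x,s_n)}|C(y)-C(x)|\le K'\cdot p_{\omega_{|n}}\cdot n^{|\vec{n}_{\max}|},
\]
and consequently $\log\sup\le S_{n}\psi(\omega)+O(\log n)$. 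For each small $r>0$, I pick $n=n(r)$ with $s_{n+1}<r\le s_{n}$; then $B(x,r)\subset B(x,s_{n})$, while $\log r>\log s_{n+1}=S_{n}\varphi(\omega)+\log\delta_{n+1}+O(1)$ by boundedness of $\varphi$. Since both quantities are negative for small $r$, the characterisation (\ref{eq:hoelder exponent as liminf}) gives
\[
\Hol(C,x)\ge\liminf_{n\to\infty}\frac{S_{n}\psi(\omega)+O(\log n)}{S_{n}\varphi(\omega)+\log\delta_{n}+O(1)}.
\]

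To convert this into the stated bound, set $a_{n}:=S_{n}\psi(\omega)/n$, $b_{n}:=S_{n}\varphi(\omega)/n$, $c_{n}:=\log\delta_{n}/n$, and note $\min\varphi\le b_{n}\le\max\varphi<0$ and $c_{n}\le 0$ eventually. Because $|b_{n}|\ge|\max\varphi|$, the estimate $0\le c_{n}/b_{n}\le c_{n}/\max\varphi$ holds, so that
\[
\frac{a_{n}}{b_{n}+c_{n}}=\frac{a_{n}}{b_{n}}\cdot\frac{1}{1+c_{n}/b_{n}}\ge\frac{a_{n}}{b_{n}}\cdot\frac{1}{1+c_{n}/\max\varphi}.
\]
Taking liminf and using $\limsup_{n}(c_{n}/\max\varphi)=(\liminf_{n}c_{n})/\max\varphi$ (since $\max\varphi<0$ flips limsup and liminf) produces the claimed inequality after rearrangement.

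The main obstacle is the simultaneous coupling of the radius $r$, the cocycle index $n$, and the contraction scale $\delta_{n}$ through the definition of $s_{n}$: one must absorb the polynomial factor $n^{|\vec{n}_{\max}|}$ against the geometric decay of $p_{\omega_{|n}}$, and cleanly separate the $\log\delta_{n}$ contribution to $\log s_{n}$ from $S_{n}\varphi(\omega)$. The non-eventual-constancy hypothesis on $\omega$ is invoked to exclude degenerate orbits where the analysis would collapse, and the splitting of the final liminf is possible precisely because $|b_{n}|$ is bounded below by $|\max\varphi|$, letting the $c_{n}$-dependence be isolated.
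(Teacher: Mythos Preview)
Your argument is correct and follows essentially the same route as the paper's proof: choose $n$ so that $B(x,r)\subset f_{\omega_{|n}}^{-1}(\overline{O})$, bound the oscillation of $C$ on this set by $K'p_{\omega_{|n}}n^{q}$ via Lemmas \ref{lem:functional equation via cocycle} and \ref{lem:growth of cocycle}, compare $\log r$ with $\log s_{n+1}$, and pass to the liminf. The paper works with the slightly smaller set $B_{k}=f_{\omega_{|n_{k}}}^{-1}\bigl(B(f_{\omega_{|n_{k}}}(x),\delta_{n_{k}})\bigr)$ and a particular sequence $(r_{k})$ realising the liminf in \eqref{eq:hoelder exponent as liminf}, but this is only a cosmetic difference. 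Your final algebraic step (splitting $a_{n}/(b_{n}+c_{n})$ and using $c_{n}/b_{n}\le c_{n}/\max\varphi$) makes explicit what the paper compresses into ``the claim follows by letting $k$ tend to infinity''; just note that you are implicitly using $\liminf_{n}(x_{n}y_{n})\ge(\liminf_{n}x_{n})(\liminf_{n}y_{n})$ for nonnegative sequences, and that the shift from $\log\delta_{n+1}$ to $\log\delta_{n}$ is harmless since $\liminf_{n}n^{-1}\log\delta_{n+1}=\liminf_{n}n^{-1}\log\delta_{n}$.
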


\begin{proof}
By \eqref{eq:hoelder exponent as liminf} there exists a sequence
$(r_{k})$ tending to zero such that 
\[
\Hol(C,x)=\lim_{k\rightarrow\infty}\frac{\log\sup_{y\in B(x,r_{k})}\left|C(x)-C(y)\right|}{\log r_{k}}.
\]
Let $n_{k}:=\max\left\{ n\ge1\mid s_{n}>r_{k}\right\} $. Since $\omega$
is not eventually constant, we have $s_{n}>0$ for each $n\in \Bbb{N}$. Hence, 
$(n_{k})\rightarrow\infty$,
as $k\rightarrow\infty$. By the definition of $n_{k}$ we have 
\[
B(x,r_{k})\subset B_{k}:=\left(f_{\omega_{|n_{k}}}\right)^{-1}\left(B\left(f_{\omega_{|n_{k}}}(x),\delta_{n_{k}}\right)\right),\quad k\in\N.
\]
Hence, if $r_{k}<1$ then 
\[
\frac{\log\sup_{y\in B(x,r_{k})}\left|C(x)-C(y)\right|}{\log r_{k}}\ge\frac{\log\sup_{y\in B_{k}}\left|C(x)-C(y)\right|}{\log r_{k}}.
\]
Also, by the definition of $n_{k}$, we have $s_{n_{k}+1}\le r_{k}$.
Therefore, we have 
\[
\frac{\log\sup_{y\in B_{k}}\left|C(x)-C(y)\right|}{\log r_{k}}\ge\frac{\log\sup_{y\in B_{k}}\left|C(x)-C(y)\right|}{\log s_{n_{k}+1}}=\frac{\log\sup_{y\in B_{k}}\left|C(x)-C(y)\right|}{-\log D+S_{n_{k}+1}\varphi(\omega)+\log\delta_{n_{k}+1}}.
\]
By Lemmas \ref{lem:functional equation via cocycle} and  \ref{lem:growth of 
cocycle}
there exist constants $K'$ and $q$, which are independent of $k$,
such that 
\begin{align*}
\sup_{y\in B_{k}}\left|C(x)-C(y)\right| & \le K'p_{\omega_{|n_{k}}}n_{k}^{q}.
\end{align*}
We conclude that 
\begin{align*}
\frac{\log\sup_{y\in B_{k}}\left|C(x)-C(y)\right|}{-\log D+S_{n_{k}+1}\varphi(\omega)+\log\delta_{n_{k}+1}} & \ge\frac{S_{n_{k}}\psi(\omega)+q\log(n_{k})+\log K'}{-\log D+S_{n_{k}+1}\varphi(\omega)+\log\delta_{n_{k}+1}}.\\
 & =\frac{S_{n_{k}}\psi(\omega)\cdot\left(1+(q\log(n_{k})+\log K')/S_{n_{k}}\psi(\omega)\right)}{S_{n_{k}+1}\varphi(\omega)\cdot\left(1+\left(-\log D+\log\delta_{n_{k}+1}\right)/S_{n_{k}+1}\varphi(\omega)\right)}
\end{align*}
and the claim follows by letting $k$ tend to infinity. 
\end{proof}
The following result is the  analogue of  \cite[Lemma 5.1 and Lemma 
5.3]{JS2016}.
\begin{cor}
\label{cor:dynamical formulation of hoelder exponent}Suppose that
$(f_{i})_{i\in I}$ satisfies the separating condition. Let $C\in\T$ be non-trivial.
Then for all $\omega\in\Sigma$ and $x=\pi(\omega)$, we have 
\[
\liminf_{n\rightarrow\infty}\frac{S_{n}\psi(\omega)}{S_{n}\varphi(\omega)}=\Hol(C,x).
\]
\end{cor}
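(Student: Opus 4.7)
The plan is to combine the upper bound from Proposition~\ref{prop:upperbound pointwise hoelder} with the lower bound from Proposition~\ref{prop:lower bound}; both apply since the separating condition implies the open set condition. The additional disjointness afforded by the separating condition will force the two bounds to coincide.

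For the upper bound, I would first observe that under the separating condition the coding map $\pi: \Sigma \to J$ is injective. Indeed, if $\omega \neq \tau$ first disagree at position $k$, then applying $f_{\omega_{|k-1}}=f_{\tau_{|k-1}}$ to $\pi(\omega)=\pi(\tau)$ reduces matters to the case $k=1$, whereupon $\pi(\omega)$ would lie in the empty set $f_{\omega_1}^{-1}(\overline{O})\cap f_{\tau_1}^{-1}(\overline{O})$. Hence $\pi^{-1}(x)=\{\omega\}$, and Proposition~\ref{prop:upperbound pointwise hoelder} immediately simplifies to
\[
\Hol(C,x)\le \liminf_{n\to\infty}\frac{S_n\psi(\omega)}{S_n\varphi(\omega)}.
\]

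For the lower bound, the idea is to show that the correction factor $\liminf_n n^{-1}\log \delta_n$ appearing in Proposition~\ref{prop:lower bound} vanishes. By Definition~\ref{def:sepcond} we may choose $O$ so that $J\subset O$; since $J$ is compact and $O$ is open, $c:=d(J,\partial O)>0$. Because $f_{\omega_{|n}}(x)=\pi(\sigma^n\omega)\in J$, the quantity $\delta_n=d(f_{\omega_{|n}}(x),\partial O)$ is trapped in the interval $[c,\diam(O)]$, so $n^{-1}\log\delta_n\to 0$ and in particular $\liminf_n n^{-1}\log\delta_n=0$. I expect the main subtlety to be that Proposition~\ref{prop:lower bound} requires $\omega$ to be not eventually constant; however, inspection of that proof reveals that this hypothesis is used only to guarantee $s_n>0$ for every $n$, via $\delta_n>0$, and the uniform lower bound $\delta_n\ge c>0$ just established delivers this unconditionally under the separating condition. (Alternatively, an eventually constant $\omega$ corresponds to a periodic point of the semigroup, which can be handled directly via the functional equation in Lemma~\ref{lem:functional equation via cocycle}.) Thus the conclusion of Proposition~\ref{prop:lower bound} applies verbatim to every $\omega\in\Sigma$, yielding
\[
\Hol(C,x)\ge \liminf_{n\to\infty}\frac{S_n\psi(\omega)}{S_n\varphi(\omega)},
\]
which together with the upper bound above gives the asserted equality.
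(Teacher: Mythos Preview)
Your proof is correct and follows essentially the same route as the paper: derive $\inf_n\delta_n>0$ from the separating condition (via $J\subset O$ and compactness), conclude $\liminf_n n^{-1}\log\delta_n=0$, and then combine Propositions~\ref{prop:lower bound} and~\ref{prop:upperbound pointwise hoelder}. Two minor remarks: (i) the injectivity of $\pi$ is not needed for the upper bound, since Proposition~\ref{prop:upperbound pointwise hoelder} already gives $\Hol(C,x)\le\inf_{\omega'\in\pi^{-1}(x)}\liminf_n S_n\psi(\omega')/S_n\varphi(\omega')\le\liminf_n S_n\psi(\omega)/S_n\varphi(\omega)$ for any chosen $\omega$; (ii) your explicit treatment of the eventually-constant case is a genuine improvement over the paper, which simply invokes Proposition~\ref{prop:lower bound} without commenting on that hypothesis.
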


\begin{proof}
Since $\inf_{n\in\N}\delta_{n}>0$, we have $\liminf_{n\rightarrow\infty}n^{-1}\log\delta_{n}=0$.
Hence, $\liminf_{n\rightarrow\infty}S_{n}\psi(\omega)\big/S_{n}\varphi(\omega)\le\Hol(C,x)$
by Proposition \ref{prop:lower bound}. The converse inequality follows
from Proposition \ref{prop:upperbound pointwise hoelder}.
\end{proof}
Let us end this section with the following remark concerning Proposition
\ref{prop:lower bound} and Corollary \ref{cor:dynamical formulation of hoelder exponent}.
\begin{rem}
It is not difficult to find examples of systems $(f_{i})_{i\in I}$
satisfying the open set condition with limit points $x=\pi(\omega)\in J$,
$\omega\in\Sigma$, such that 
\[
\Hol(C_{0},x)<\lim_{n\rightarrow\infty}\frac{S_{n}\psi(\omega)}{S_{n}\varphi(\omega)}.
\]
We refer to \cite[Example 3.1]{JS20} for the details.
Hence, in contrast to systems satisfying the separating condition
(see Corollary \ref{cor:dynamical formulation of hoelder exponent}),
the dynamical characterization of the pointwise H\"older exponent
in terms of quotients of ergodic sums does not always hold for systems
satisfying the open set condition. However, Proposition \ref{prop:lower bound}
can be used to establish a dynamical characterization for almost every
limit point with respect to suitable reference measures. Moreover,
it turns out in Theorem \ref{thm:multifrac} below that the dimension
spectrum of the pointwise H\"older exponents coincides with the spectrum
of quotients of ergodic sums of the potentials $\varphi$ and $\psi$.
\end{rem}

\subsection{Dimension spectrum of pointwise H\"older exponents\label{subsec:Dimension-spectrum-of}}

We define 
\[
\mathcal{F}(\alpha):=\mathcal{F}_{\vec{p}}(\alpha):=\pi\left\{ \omega\in\Sigma\mid\lim_{n\rightarrow\infty}\frac{S_{n}\psi(\omega)}{S_{n}\varphi(\omega)}=\alpha\right\} .
\]
Suppose that $(f_{i})_{i\in I}$ satisfies the open set condition.
It is well known that the multifractal spectrum is complete (\cite{MR1738952}),
that is, there exist $\alpha_{-},\alpha_{+}\in\R$ such that $\mathcal{F}(\alpha)\neq\emptyset$
if and only if $\alpha\in\left[\alpha_{-},\alpha_{+}\right]$. 
For every $\beta\in\R$ there exists a unique $t(\beta)\in\R$ such
that $\mathcal{P}(t(\beta)\varphi+\beta\psi)=0$, where $\mathcal{P}(u)$
refers to the topological pressure of a continuous function $u$ with
respect to the dynamical system $(\Sigma,\sigma)$ (see \cite{MR648108}).
Note that  $\varphi$ is H\"older continuous since the dynamical system is  
expanding  with $\mathcal{C}^{1+\epsilon}$ branches. Also,  $\psi$ is H\"older 
continuous as it only depends on the first coordinate. By well-known results 
from thermodynamic 
formalism for H\"older continuous potentials, it follows that the function $t$ 
is real-analytic and convex
function with $t'(\beta)=-\int\psi\,\,d\mu_{\beta}/\int\varphi\,\,d\mu_{\beta}$
where $\mu_{\beta}$ denotes the unique Gibbs probability measure
on $\Sigma$ associated with $t(\beta)\varphi+\beta\psi$. Moreover, with  
$\alpha_{-}$ and $\alpha_{+}$ given by \eqref{defalphaplusminus}, we have that 
the 
function
$t$ satisfies $t''>0$ if and only if $\alpha_{-}<\alpha_{+}$, and
have that $\alpha_{-}=\alpha_{+}$ if and only if $\delta\varphi$
and $\psi$ are cohomologous, where 
\[
\delta:=t(0)=\dim_{H}(J).
\]
Here, we say that $\delta\varphi$ and $\psi$ are cohomologous if
there exists a continuous function $\kappa:\Sigma\rightarrow\R$ such
that $\delta\varphi=\psi+\kappa-\kappa\circ\sigma$. Note that we
have $-t'(\R)=(\alpha_{-},\alpha_{+})$ if $\alpha_{-}<\alpha_{+}$,
and $-t'(\R)=\{\alpha_{-}\}$, otherwise. We  define the level
sets 
\[
\mathcal{F}^{\#}(\alpha):=\begin{cases}
\pi\left\{ \omega\in\Sigma\mid\limsup_{n\rightarrow\infty}\frac{S_{n}\psi(\omega)}{S_{n}\varphi(\omega)}\ge\alpha\right\} , & \alpha\ge\alpha_{0}\\
\pi\left\{ \omega\in\Sigma\mid\liminf_{n\rightarrow\infty}\frac{S_{n}\psi(\omega)}{S_{n}\varphi(\omega)}\le\alpha\right\} , & \alpha<\alpha_{0},
\end{cases}
\]
where we have set $\alpha_{0}:=\int\psi\,\,d\mu_{0}/\int\varphi\,\,d\mu_{0}.$ We
denote the convex conjugate of $t$ (\cite{rockafellar-convexanalysisMR0274683})
by
\[
t^{*}(u):=\sup\left\{ \beta u-t(\beta)\mid\beta\in\R\right\} \in\R\cup\{+\infty\}.
\]
It is well-known (see e.g. \cite{pesindimensiontheoryMR1489237,MR1738952})
that for $\alpha\in[\alpha_{-},\alpha_{+}]$, 
\begin{equation}
\dim_{H}\left(\mathcal{F}(\alpha)\right)=\dim_{H}\left(\mathcal{F}^{\#}(\alpha)\right)
=-t^{*}(-\alpha)\ge0,\label{eq:multifractal formalism}
\end{equation}
$-t^{*}(-\alpha)>0$ for $\alpha\in(\alpha_{-},\alpha_{+})$ if $\alpha 
_{-}<\alpha _{+}$, and that 
$\mathcal{F}(\alpha)=\mathcal{F}^{\#}(\alpha)=\emptyset$
for $\alpha\notin[\alpha_{-},\alpha_{+}]$. To prove this, it is shown
that if $\beta\in\R$ and $\alpha=-t'(\beta)$, then for the corresponding
Gibbs measure $\mu_{\beta}$ we have $\mu_{\beta}\circ\pi^{-1}(\mathcal{F}(\alpha))=1$
and 
\begin{equation}
\dim_{H}\left(\mathcal{F}(\alpha)\right)=\dim_{H}(\mu_{\beta}\circ\pi^{-1})>0.\label{eq:multifractal formalism measure theoretic}
\end{equation}
We refer to \cite{JS13b,JS2016} for a closely related framework for
random complex dynamical systems. See in particular \cite[Remark 3.14, 
Proposition 4.4, Theorem 5.3]{JS13b}.
If $\alpha_{-}=\alpha_{+}$ then $\mathcal{F}(\alpha_{-})=J$ and
for every $\beta\in\R$, 
\[
\dim_{H}(\mathcal{F}(\alpha_{-}))=\dim_{H}(\mu_{\beta}\circ\pi^{-1})=\dim_{H}(J)=t(0)=\delta.
\]
\begin{cor}
\label{cor:lowerbound-almosteverywhere}Suppose that $(f_{i})_{i\in I}$
satisfies the open set condition.   Let $C\in \T $
be non-trivial. Then for all $\alpha\in[\alpha_{-},\alpha_{+}]$ we
have 
\[
\dim_{H}\left\{ x\in J\mid\Hol(C,x)=\alpha\right\} \ge-t^{*}(-\alpha).
\]
\end{cor}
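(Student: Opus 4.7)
The plan is to produce a Borel probability measure supported on $\{x\in J\mid \Hol(C,x)=\alpha\}$ whose Hausdorff dimension attains $-t^{*}(-\alpha)$. By the multifractal formalism recalled in \eqref{eq:multifractal formalism measure theoretic}, the natural candidate is the pushforward $\tilde{\mu}_{\beta}:=\mu_{\beta}\circ\pi^{-1}$, where $\beta\in\R$ is chosen so that $-t'(\beta)=\alpha$ (such a $\beta$ exists by strict convexity of $t$ when $\alpha_{-}<\alpha_{+}$; take $\beta=0$ when $\alpha_{-}=\alpha_{+}=\alpha$). It already satisfies $\dim_{H}(\tilde{\mu}_{\beta})=-t^{*}(-\alpha)$, so it suffices to show that $\tilde{\mu}_{\beta}(\{x\in J\mid \Hol(C,x)=\alpha\})=1$.

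First I would establish the upper bound $\Hol(C,\pi(\omega))\le\alpha$ for $\mu_{\beta}$-a.e.\ $\omega$. By the ergodicity of $\mu_{\beta}$ and Birkhoff's ergodic theorem,
\[
\lim_{n\to\infty}\frac{S_{n}\psi(\omega)}{S_{n}\varphi(\omega)}=\frac{\int\psi\,d\mu_{\beta}}{\int\varphi\,d\mu_{\beta}}=-t'(\beta)=\alpha
\]
for $\mu_{\beta}$-a.e.\ $\omega$, whence Proposition~\ref{prop:upperbound pointwise hoelder} gives the desired upper bound.

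Next I would deduce the matching lower bound $\Hol(C,\pi(\omega))\ge\alpha$ from Proposition~\ref{prop:lower bound}, which requires verifying, for $\mu_{\beta}$-a.e.\ $\omega$, the two conditions (a) $\omega$ is not eventually constant, and (b) $\liminf_{n}n^{-1}\log\delta_{n}(\omega)=0$. Item (a) is immediate because the Gibbs measure $\mu_{\beta}$ is non-atomic while the set of eventually constant sequences is countable. For (b), I use the identity $f_{\omega_{|n}}(\pi(\omega))=\pi(\sigma^{n}\omega)$, which gives $\delta_{n}(\omega)=d(\pi(\sigma^{n}\omega),\partial O)$. Writing $h(\omega):=-\log d(\pi(\omega),\partial O)$, the task reduces to $h(\sigma^{n}\omega)/n\to 0$ for $\mu_{\beta}$-a.e.\ $\omega$. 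This follows from $h\in L^{1}(\mu_{\beta})$ by the standard Birkhoff trick $h(\sigma^{n}\omega)/n=S_{n+1}h(\omega)/n-S_{n}h(\omega)/n\to \int h\,d\mu_{\beta}-\int h\,d\mu_{\beta}=0$. Given (a) and (b), Proposition~\ref{prop:lower bound} yields $\Hol(C,\pi(\omega))\ge\alpha$ since the correction factor equals $1$ (recall $\max\varphi<0$).

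Combining the two bounds, $\Hol(C,\pi(\omega))=\alpha$ for $\mu_{\beta}$-a.e.\ $\omega$, hence $\tilde{\mu}_{\beta}(\{x\in J\mid\Hol(C,x)=\alpha\})=1$ and the stated dimension bound follows. The main obstacle is the integrability $\int h\,d\mu_{\beta}<\infty$, since $\partial O$ consists of only two points and $h$ can blow up there. I would settle this via a standard Gibbs-measure estimate: bounded distortion (Lemma~\ref{lem:bounded distortion}) together with the Gibbs property of $\mu_{\beta}$ (for the H\"older potential $t(\beta)\varphi+\beta\psi$ with pressure zero) yields a uniform upper bound $\tilde{\mu}_{\beta}(B(x,r))\le C r^{s}$ for some $s>0$ and all $x\in J$; integrating via the layer-cake formula $\int h\,d\mu_{\beta}=\int_{0}^{\infty}\tilde{\mu}_{\beta}(\{y:d(y,\partial O)<e^{-t}\})\,dt$ and summing over the two endpoints of $\partial O$ yields a finite geometric-type integral. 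Once this quantitative bound is in place, all the steps above fit together.
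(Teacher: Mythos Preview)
Your strategy is exactly the paper's: pick an ergodic measure $\mu$ with $\int\psi\,d\mu/\int\varphi\,d\mu=\alpha$ and $\dim_H(\mu\circ\pi^{-1})=-t^*(-\alpha)$, apply Proposition~\ref{prop:upperbound pointwise hoelder} for the upper bound on $\Hol(C,\pi(\omega))$, and apply Proposition~\ref{prop:lower bound} for the lower bound after checking $n^{-1}\log\delta_n(\omega)\to 0$ almost surely. Your explicit verification of the last point (integrability of $h(\omega)=-\log d(\pi(\omega),\partial O)$ via the Ahlfors upper bound $\tilde\mu_\beta(B(x,r))\le Cr^s$ plus the telescoping Birkhoff trick) is precisely the content of the Patzschke reference the paper cites without details, so in that respect your write-up is more self-contained.

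There is, however, one genuine gap. When $\alpha_-<\alpha_+$ the range of $-t'$ is the \emph{open} interval $(\alpha_-,\alpha_+)$, so for $\alpha\in\{\alpha_-,\alpha_+\}$ there is no $\beta$ with $-t'(\beta)=\alpha$, contrary to what you assert. The paper avoids this by (i) first disposing of the case $-t^*(-\alpha)\le 0$ as trivial, and (ii) for the remaining cases invoking the standard multifractal result (Pesin, Schmeling) that an \emph{ergodic} measure $\mu$ with $\int\psi\,d\mu/\int\varphi\,d\mu=\alpha$ and $\dim_H(\mu\circ\pi^{-1})=-t^*(-\alpha)$ exists for every $\alpha\in[\alpha_-,\alpha_+]$, not only for $\alpha$ in the interior. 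Note also that your integrability argument for $h$ relies on the Gibbs property of $\mu_\beta$, so it does not transfer verbatim to these endpoint measures; the paper simply defers to Patzschke for the almost-sure convergence $n^{-1}\log\delta_n\to 0$ with respect to the more general ergodic $\mu$. To close the gap you should either invoke the same references, or restrict your claim to $\alpha\in(\alpha_-,\alpha_+)$ (and $\alpha_-=\alpha_+$) and handle the endpoints separately.
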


\begin{proof}
It is well known that, for each $\alpha\in[\alpha_{-},\alpha_{+}]$,
there exists an ergodic Borel probability measure $\mu$ with $\int\psi\,\,d\mu/\int\varphi\,\,d\mu=\alpha$
and $\dim_{H}(\mu\circ\pi^{-1})=-t^{*}(-\alpha)\ge0$ (see e.g. \cite{pesindimensiontheoryMR1489237,MR1738952}).
Suppose that $-t^{*}(-\alpha)>0$; otherwise there is nothing to prove.
Following \cite{MR1479016} we have for $\mu$-a.e. $\omega\in\Sigma$
and $x=\pi(\omega)$,  $\lim_{n\rightarrow\infty}n^{-1}\log\delta_{n}=0$,
where $\delta_{n}$ is defined prior to Proposition \ref{prop:lower bound}.
Hence, for $\mu$-a.e. $\omega\in\Sigma$ and $x=\pi(\omega)$, $\Hol(C,x)\ge\alpha$
by Proposition \ref{prop:lower bound}. Moreover, by Proposition \ref{prop:upperbound pointwise hoelder}
we have $\Hol(C,x)\le\alpha$ $\mu$-a.e. We conclude that $\mu\circ\pi^{-1}\left(\left\{ x\in J\mid\Hol(C,x)=\alpha\right\} \right)=1$.
Thus, $\dim_{H}\left\{ x\in J\mid\Hol(C,x)=\alpha\right\} \ge\dim_{H}(\mu\circ\pi^{-1})=-t^{*}(-\alpha)$.
\end{proof}
The following proposition is an extension of results of  Allaart
(\cite{Allaart17}) for self-similar measures.
\begin{prop}
\label{prop:lower half of spectrum upper bound}Suppose that $(f_{i})_{i\in I}$
satisfies the open set condition. For every 
$\alpha\in\left[\alpha_{-},\alpha_{0}\right]$
and every $C\in\T$ we have 
\[
\dim_{H}\left\{ x\in J\mid\Hol(C,x)=\alpha\right\} \le-t^{*}(-\alpha).
\]
Further, if $\alpha<\alpha_{-}$ then $\left\{ x\in J\mid\Hol(C,x)=\alpha\right\} =\emptyset$. 
\end{prop}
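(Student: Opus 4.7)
The plan is to combine the lower bound on the pointwise H\"older exponent from Proposition~\ref{prop:lower bound} with the multifractal formalism~(\ref{eq:multifractal formalism}) applied to the level set $\mathcal{F}^{\#}(\alpha)$. The key is to establish the containment $\{x \in J : \Hol(C,x) = \alpha\} \subset \mathcal{F}^{\#}(\alpha)$, modulo an exceptional set of Hausdorff dimension at most $-t^{*}(-\alpha)$, for every $\alpha \le \alpha_{0}$; the conclusion then follows from $\dim_{H} \mathcal{F}^{\#}(\alpha) \le -t^{*}(-\alpha)$.

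For each $x \in J$ with $\Hol(C,x) = \alpha$ I would select a coding $\omega \in \pi^{-1}(x)$ that is not eventually constant and satisfies $\liminf_{n \to \infty} n^{-1}\log\delta_{n}(\omega) = 0$. Since $\max\varphi < 0$ and $\log\delta_{n} \le \log\diam(\overline{O})$ together force $\liminf_{n} n^{-1}\log\delta_{n} \le 0$, the factor $1 + \liminf_{n} n^{-1}\log\delta_{n}/\max\varphi$ appearing in Proposition~\ref{prop:lower bound} is at least $1$, with equality precisely under the selection condition. For such $\omega$, Proposition~\ref{prop:lower bound} collapses to
\[
\alpha \;=\; \Hol(C,x) \;\ge\; \liminf_{n \to \infty} \frac{S_{n}\psi(\omega)}{S_{n}\varphi(\omega)},
\]
so $x = \pi(\omega)$ lies in $\pi\{\omega \in \Sigma : \liminf_{n} S_{n}\psi/S_{n}\varphi \le \alpha\} = \mathcal{F}^{\#}(\alpha)$ for every $\alpha \le \alpha_{0}$ (for $\alpha = \alpha_0$ one combines with the upper bound from Proposition~\ref{prop:upperbound pointwise hoelder} to conclude $\limsup \ge \liminf = \alpha_0$).

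The exceptional set $N$ decomposes into the countable set of points with only eventually constant codings --- the preimages $g^{-1}(\Fix(f_{i}))$ for $g \in G$ and $i \in I$, of Hausdorff dimension zero --- and the set of points whose every coding satisfies $\liminf_{n} n^{-1}\log\delta_{n} < 0$. For the latter, the Hausdorff dimension of the $\pi$-image is controlled by a covering argument through cylinders $[\omega|_n]$ together with Gibbs-measure estimates for $\mu_{\beta}$, where $\beta$ is conjugate to $\alpha$ via $-t'(\beta) = \alpha$, in the spirit of Allaart~\cite{Allaart17} for self-similar measures. For the emptiness assertion when $\alpha < \alpha_{-}$: the definition~(\ref{defalphaplusminus}) gives $\liminf_{n} S_{n}\psi(\omega)/S_{n}\varphi(\omega) \ge \alpha_{-}$ for every $\omega \in \Sigma$, whence $\mathcal{F}^{\#}(\alpha) = \emptyset$; a direct computation along eventually constant codings through $q \in \Fix(f_{i})$ yields $\lim_{n} S_{n}\psi/S_{n}\varphi = \log p_{i}/(-\log f_{i}'(q)) \ge \alpha_{-}$, which combined with Proposition~\ref{prop:upperbound pointwise hoelder} rules out points in $N$ as well. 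The main technical obstacle is the covering argument for the second type of exceptional set: because the OSC setting (as opposed to the separating condition) permits the Julia set to touch $\partial O$, one must carefully control orbits that repeatedly approach the fixed points of the $f_{i}$, using bounded distortion from Lemma~\ref{lem:bounded distortion} and the Gibbs property of $\mu_{\beta}$ to extend Allaart's self-similar-measure framework to general $\mathcal{C}^{1+\epsilon}$ branches.
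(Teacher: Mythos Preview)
Your approach has a genuine gap. The decomposition into ``good'' points (where some coding has $\liminf_n n^{-1}\log\delta_n = 0$) and the exceptional set $N$ is natural, but the treatment of $N$ is where the proof lives, and you have not supplied it. On $N$, Proposition~\ref{prop:lower bound} gives you nothing: the multiplicative factor $1 + \liminf_n n^{-1}\log\delta_n/\max\varphi$ is strictly larger than $1$, so the inequality $\Hol(C,x)\cdot(\text{factor})\ge \liminf S_n\psi/S_n\varphi$ no longer yields $\Hol(C,x)\ge \liminf S_n\psi/S_n\varphi$, and hence you cannot place such $x$ in $\mathcal{F}^{\#}(\alpha)$. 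Moreover, the set $N$ itself does not depend on $\alpha$, so the claim that its intersection with $\{\Hol=\alpha\}$ has dimension at most $-t^*(-\alpha)$ requires a separate argument linking the H\"older exponent to the dynamics \emph{on $N$}---precisely where your tool fails. The vague reference to ``a covering argument \ldots\ in the spirit of Allaart'' is in fact the entire content of the proposition; it is not a technicality to be deferred. The same gap affects the emptiness claim for $\alpha<\alpha_-$: for points in $N$ you have no lower bound on $\Hol(C,x)$, and the upper bound from Proposition~\ref{prop:upperbound pointwise hoelder} goes the wrong way.

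The paper's proof bypasses Proposition~\ref{prop:lower bound} entirely and proceeds by a direct covering of $\{x\in J:\Hol(C,x)=\alpha\}$. The new idea is this: given $x$ and $r_k\to 0$ realising the H\"older exponent, one covers $B(x,r_k)\cap J$ not by a single cylinder but by a \emph{pair} $\pi([\nu_k])\cup\pi([\nu_k'])$, where $\nu_k'$ is obtained from $\nu_k$ by replacing a terminal block $j(s+1)^{\ell_k}$ with $(j+1)1^{\ell_k'}$ (or vice versa), with $\ell_k'$ chosen so that $\diam\pi([\nu_k'])\asymp r_k$. One then shows that the H\"older condition forces at least one of $\nu_k,\nu_k'$ to lie in $\mathcal{C}_{\alpha+\epsilon}=\{\tau: S_{|\tau|}\psi(\overline\tau)/S_{|\tau|}\varphi(\overline\tau)\le\alpha+\epsilon\}$, since otherwise $C$ would vary too little on $B(x,r_k)$. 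The map $\nu\mapsto\nu'$ is boundedly-many-to-one (because $\ell_k$ and $\ell_k'$ are comparable), so the covering sum is dominated by $\sum_{\tau\in\mathcal{C}_{\alpha+\epsilon}}\diam(\pi[\tau])^u$, which is finite for $u>t(\beta)+\beta(\alpha+\epsilon)$ and any $\beta>0$. This pair-of-cylinders trick is exactly what handles the boundary-approaching orbits that your approach leaves uncontrolled.
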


\begin{proof}
We use $\diam(A):=\sup\{d(x,y)\mid x,y\in A\} $ to denote the diameter of a set 
$A\subset \R$. We first observe that by the H\"older continuity of $\varphi$, 
there exists a constant $D\ge 1$ such that 
\[
\diam(\pi([\gamma i]))\ge D^{-1} \diam(\pi([\gamma ]))
\]
for all $i\in I$ and for all $\gamma \in I^*$. 
Let  $c\in J$ and $x=\pi(\omega)\in J$ with $\Hol(C,x)=\alpha$. By 
\eqref{eq:hoelder exponent as liminf}
there exists $r_{k}\rightarrow0$ such that 
\[
\alpha=\lim_{k\rightarrow\infty}\log\sup_{y\in 
B(x,r_{k})}\left|C(y)-C(x)\right|/\log r_{k}.
\]
Define $n_{k}:=\min\left\{ n\ge1\mid\diam(\pi([\omega_{|n}]))<D^2 r_{k}\right\} 
$,
$k\ge1$. If 
\[
(\omega_{1},\dots,\omega_{n_{k}})=(\tau j(s+1)^{\ell_k}),
\]
for some $\tau\in I^{*}$, $j\le s$ and $\ell_k\ge1$ then we define 
$\nu_{k},\nu_{k}'\in I^{*}$
 by 
\[
\nu_{k}:=(\tau 
j(s+1)^{\ell_k-1})=(\omega_{1},\dots,\omega_{n_{k}-1}),\quad\nu_{k}':=(\tau(j+1)1^{\ell'_k}),
\]
where $\ell'_k\ge1$ is given by 
\[
\ell'_k:=\max \{\ell \ge 1\mid \diam(\pi([\tau (j+1) 1^\ell]))\ge r_k \} .
\]
Similarly, if $(\omega_{1},\dots,\omega_{n_{k}})=(\tau(j+1)1^{\ell_k})$,
then we define $\nu_{k}:=(\omega_{1},\dots,\omega_{n_{k}-1})$ and
$\nu'_{k}:=(\tau j(s+1)^{\ell'_k})$, where 
$\ell'_k:=\max \{\ell \ge 1\mid \diam(\pi([\tau j (s+1)^\ell]))\ge r_k \}$. 
Note that the sequence
$\ell_k'$ 
is well-defined. If $\omega_{n_{k}}\notin\{1,s+1\}$
then we define $\nu_{k}:=\nu_{k}':=(\omega_{1},\dots,\omega_{n_{k}-1})$.
Let $n_{k}':=|\nu_{k}'|$. 

It is important to note that, by the definition of $\nu_{k}$ and
$\nu_{k}'$, we have as $k\rightarrow\infty$, 
\begin{equation}
\diam\left(\pi\left[\nu_{k}\right]\right)\asymp\diam\left(\pi\left[\nu_{k}'\right]\right)\asymp
 r_{k}.\label{eq:comparable}
\end{equation}
Moreover, there exists a  constant $E\ge 0$ such that 
\begin{equation}
\ell_{k}'\cdot\varphi(\overline{1})-E\le\ell_{k}\cdot\varphi(\overline{s+1})\le
\ell_{k}'\cdot\varphi(\overline{1})+E.\label{eq:comparability
 lk}
\end{equation}
To prove \eqref{eq:comparability lk}
suppose that $(\omega_{1},\dots,\omega_{n_{k}})=(\tau j(s+1)^{\ell_{k}})$.
The other case $(\omega_{1},\dots,\omega_{n_{k}})=(\tau(j+1)1^{\ell_{k}}),$
can be handled analogously. By the H\"older continuity of $\varphi$ we have, as 
$k\rightarrow\infty$, 
\[
\diam\left(\pi\left[\nu_{k}\right]\right)=\diam\left(\pi\left[(\tau 
j(s+1)^{\ell_{k}-1})\right]\right)\asymp\diam\left(\pi\left[\tau\right]\right)
\e^{S_{\ell_{k}}\varphi(\overline{(s+1)})}
\]
\[
\diam\left(\pi\left[\nu_{k}'\right]\right)=\diam\left(\pi\left[\tau(j+1)1^{\ell'_{k}}\right]\right)\asymp\diam\left(\pi\left[\tau\right]\right)\e^{S_{\ell'_{k}}\varphi(\overline{1})},
\]
which proves \eqref{eq:comparability lk}.

We will show that for  $k\ge1$ 
\[
B(x,r_{k})\cap J \subset\pi([\nu_{k}])\cup\pi([\nu'_{k}]).
\]

First suppose that $(\omega_{1},\dots,\omega_{n_{k}})=(\tau j(s+1)^{\ell_{k}})$.
Then $\pi\left[\nu_{k}\right]=\pi\left[(\tau 
j(s+1)^{\ell_{k}-1})\right]\supset\pi\left[(\tau j(s+1)^{\ell_{k}-1}1)\right]$.
Since $x\in\pi\left[(\tau j(s+1)^{\ell_{k}})\right]$ we have 
$x\ge\max\pi\left[(\tau j(s+1)^{\ell_{k}-1}1)\right]$.
By the definition of  $n_{k}$ we have 
\[
\diam\left(\pi\left[(\tau j(s+1)^{\ell_{k}-1}1)\right]\right)\ge 
D^{-1}\diam\left(\pi\left[(\tau 
j(s+1)^{\ell_{k}-1})\right]\right)=D^{-1}\diam\left(\pi\left[(\omega_{1},\dots,
\omega_{n_{k}-1})\right]\right)\ge
D r_{k}.
\]
Hence, 
\[
\pi\left[\nu_{k}\right]\supset\left[x,x-r_{k}\right]\cap J.
\]
Further, by the definition of $\ell'_{k}$ we have 
\[
\diam\left(\pi\left[\nu_{k}'\right]\right)=\diam\left(\pi\left[\tau(j+1)1^{\ell'_{k}}\right]\right)\ge
 r_{k},
\]
so 
$\left[x,x+r_{k}\right]\cap J 
\subset\pi\left[\nu_{k}\right]\cup\pi\left[\nu_{k}'\right]$.
This proves that 
$B(x,r_{k})\cap J \subset\pi([\nu_{k}])\cup\pi([\nu'_{k}])$.

%
%
%
%
Let $\epsilon>0$. We will derive from our assumption $\Hol(C,x)=\alpha$
that there exists $N\ge1$ such that for all $k\ge N$, 
\begin{equation}
\frac{S_{\left|\nu_{k}\right|}\psi(\overline{\nu_{k}})}{S_{\left|\nu_{k}\right|}\varphi(\overline{\nu_{k}})}\le\alpha+\epsilon\quad\text{or}\quad\frac{S_{\left|\nu_{k}'\right|}\psi(\overline{\nu_{k}'})}{S_{\left|\nu_{k}'\right|}\varphi(\overline{\nu_{k}'})}\le\alpha+\epsilon.\label{eq:nu or nuprime is good}
\end{equation}
To prove \eqref{eq:nu or nuprime is good}, we first note that by
Lemmas \ref{lem:functional equation via cocycle} and \ref{lem:growth of 
cocycle}, there exist constants $K'\ge 1$ and $q\in \N_0$ such that for every 
$\nu\in I^{*}$, 
\[
\sup_{y_{1},y_{2}\in\pi([\nu])}\left|C(y_{1})-C(y_{2})\right|\le 
K'\exp(S_{|\nu|}\psi(\overline{\nu}))|\nu|^{q}.
\]
Suppose for a contradiction that \eqref{eq:nu or nuprime is good} does not hold.
Then, by passing to a subsequence of $(n_{k})$ we may assume that
for all $k$ and for all $\nu\in\{\nu_{k},\nu'_{k}\}$ we have $S_{|\nu|}\psi(\overline{\nu})\big/S_{|\nu|}\varphi(\overline{\nu})\ge\alpha+\epsilon$,
and hence, by enlarging the constant $K'$ if necessary, we have 
\[
\sup_{y_{1},y_{2}\in\pi([\nu])}\left|C(y_{1})-C(y_{2})\right|\le K'\exp(S_{|\nu|}\psi(\overline{\nu}))|\nu|^{q}\le K'r_{k}^{\alpha+\epsilon}|\nu|^{q}.
\]
Since $B(x,r_{k})\cap J\subset\pi([\nu_{k}])\cup\pi([\nu'_{k}])$, we
conclude that 
\[
\lim_{k\rightarrow\infty}\log\sup_{y\in B(x,r_{k})}\left|C(y)-C(x)\right|/\log r_{k}\ge\alpha+\epsilon.
\]
This contradiction proves \eqref{eq:nu or nuprime is good}. 
Let $\beta>0$, $\eta>0$ and let $u=t(\beta)+\beta(\alpha+\epsilon)+\eta$. Note 
that by \eqref{eq:multifractal formalism} we have $u>t(\beta)+\beta(\alpha)\ge 
0$. We define
\[
\mathcal{C}_{\alpha+\epsilon}:=\left\{ \tau\in I^{*}\mid\frac{S_{\left|\tau\right|}\psi(\overline{\tau})}{S_{\left|\tau\right|}\varphi(\overline{\tau})}\le\alpha+\epsilon\right\} .
\]
We obtain a covering $\mathcal{C}$ of $\left\{x\in J \mid 
\Hol(C,x)=\alpha\right\} $
by images of cylinders of sufficiently small diameters as follows. For each $x\in J$ with
$\Hol(C,x)=\alpha$ we define  the sequence $(n_{k})$ and the sequences of 
cylinders $(\nu_k)$ and $(\nu_k')$ as above. This means in particular that 
(\ref{eq:nu or nuprime is good}) holds and   $x\in \pi([\nu_k])$ for every 
$k\ge 1$. We 
then pick an image of a cylinder $\nu(x):=\pi([\nu_k])$ of sufficiently small diameter. This 
defines a covering of   $\left\{x\in J \mid \Hol(C,x)=\alpha\right\} $ given by
\[
\mathcal{C}:=\left\{ \nu(x)\mid x\in J : \Hol(C,x)=\alpha\right\} .
\]
To verify that the corresponding
sum of diameters 
$\sum_{\nu\in\mathcal{C}}\diam\left(\pi\left(\left[\nu\right]\right)\right)^{u}$
converges, we proceed as follows. If $\nu\notin\mathcal{C}_{\alpha+\epsilon}$
then, by  \eqref{eq:comparable} and  \eqref{eq:nu or nuprime is good}, we can 
replace $\nu$
by $\nu'\in\mathcal{C}_{\alpha+\epsilon}$ satisfying
$\diam\left(\pi\left(\left[\nu\right]\right)\right)\asymp
\diam\left(\pi\left(\left[\nu'\right]\right)\right).$
 This defines a map $\nu\mapsto\nu'$ from
$\mathcal{C}\setminus\mathcal{C}_{\alpha+\epsilon}$ to 
$\mathcal{C}_{\alpha+\epsilon}$.
Since the involved numbers $\ell_{k}$ and $\ell'_{k}$ in the definition
of $\nu_k$ and $\nu'_k$ satisfy \eqref{eq:comparability lk}, we have
that    every element of $\mathcal{C}_{\alpha+\epsilon}$
is taken at most a uniformly bounded number of times under the map  
$\nu\mapsto\nu'$.  Since 
\begin{equation} 
\sum_{\omega\in\mathcal{C}_{\alpha+\epsilon}}\diam(\pi([\omega]))^{u}\asymp
\sum_{\omega\in\mathcal{C}_{\alpha+\epsilon}}\e^{\left(t(\beta)+\beta(\alpha+\epsilon)+\eta\right)S_{|\omega|}\varphi(\overline{\omega})}\le\sum_{\omega\in\mathcal{C}_{\alpha+\epsilon}}\e^{\left(t(\beta)+\eta\right)S_{\left|\omega\right|}\varphi(\overline{\omega})+\beta
 S_{\left|\omega\right|}\psi(\overline{\omega})}<\infty,\label{eq:finite s dim 
hausdorff}
\end{equation}
we therefore conclude that the $u$-dimensional Hausdorff measure
of $\left\{ x\in J\mid\Hol(C,x)=\alpha\right\} $ is finite. To prove that the 
last sum in (\ref{eq:finite s dim 
	hausdorff}) is finite, first recall that by the definition 
of $t(\beta)$ we have that $\mathcal{P}(t(\beta)\varphi+\beta\psi)=0$. Further, 
since $\varphi <0$ we conclude that $c \mapsto \mathcal{P}(c\varphi+\beta\psi)$ 
is strictly decreasing. Whence, 
$a:=\mathcal{P}((t(\beta)+\eta)\varphi+\beta\psi)<0$.  By the definition of 
topological pressure this implies that there exists a constant $b$  such that 
$\sum_{n=1}^{\infty}\sum_{\omega\in 
I^{n}}e^{S_{n}\left((t(\beta)+\eta)\varphi+\beta\psi\right)(\overline{\omega})}\le
 b\sum_{n=1}^{\infty}e^{na/2}<\infty.$ To complete the proof of the 
 proposition, 
 first assume that 
$\alpha\in\left[\alpha_{-},\alpha_{0}\right]$.
Since $\epsilon$ and $\eta$ are arbitrary, it follows that 
\[
\dim_{H}\left\{ x\in J\mid\Hol(C,x)=\alpha\right\} \le\inf_{\beta>0}\left\{ t(\beta)+\beta\alpha\right\} =-t^{*}(-\alpha),
\]
where we have used $\alpha\le\alpha_{0}$ for the last equality. Finally,
if $\alpha<\alpha_{-}$ then $\mathcal{C}_{\alpha+\epsilon}=\emptyset$
if $\alpha+\epsilon<\alpha_{-}$. By the above construction of the
 covering of $\left\{ x\in J\mid\Hol(C,x)=\alpha\right\} $, it thus
follows that $\left\{ x\in J\mid\Hol(C,x)=\alpha\right\} =\emptyset$.
The proof is complete.
\end{proof}
\begin{thm}
\label{thm:multifrac} Suppose that $(f_{i})_{i\in I}$ satisfies
the open set condition.  Let $C\in\T$ be non-trivial.
Then we have for all $\alpha\in[\alpha_{-},\alpha_{+}],$
\[
\dim_{H}\left\{ x\in J\mid\Hol(C,x)=\alpha\right\} =-t^{*}(-\alpha),
\]
and for $\alpha\not\notin[\alpha_{-},\alpha_{+}]$ we have $\left\{ x\in 
J\mid\Hol(C,x)=\alpha\right\} =\emptyset$. The function $g(\alpha)= 
-t^{*}(-\alpha)$ is continuous and concave on 
$[\alpha_{-},\alpha_{+}]$. If $\alpha_{-}<\alpha_{+}$ then  $g$ is 
real-analytic and positive on $(\alpha_{-},\alpha_{+})$ and satisfies $g''<0$ 
on  $(\alpha_{-},\alpha_{+})$.
\end{thm}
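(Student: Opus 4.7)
The plan is to combine the two one-sided bounds already established with one new ingredient for the right branch of the spectrum, and then extract the regularity statements from standard thermodynamic formalism. Corollary~\ref{cor:lowerbound-almosteverywhere} already gives the lower bound
\[
\dim_{H}\{x\in J:\Hol(C,x)=\alpha\}\ge -t^{*}(-\alpha),\qquad \alpha\in[\alpha_{-},\alpha_{+}],
\]
so I only need a matching upper bound on the same interval together with emptiness outside it, plus the regularity of $g(\alpha):=-t^{*}(-\alpha)$.

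For $\alpha\in[\alpha_{-},\alpha_{0}]$ and for $\alpha<\alpha_{-}$, both assertions are immediate from Proposition~\ref{prop:lower half of spectrum upper bound}. For the right half $\alpha\in[\alpha_{0},\alpha_{+}]$ and for $\alpha>\alpha_{+}$, my plan is to use Proposition~\ref{prop:upperbound pointwise hoelder}: if $\Hol(C,x)=\alpha$, then for every $\omega\in\pi^{-1}(x)$ one has
\[
\alpha\le\liminf_{n\to\infty}\frac{S_{n}\psi(\omega)}{S_{n}\varphi(\omega)}\le\limsup_{n\to\infty}\frac{S_{n}\psi(\omega)}{S_{n}\varphi(\omega)},
\]
so choosing any $\omega\in\pi^{-1}(x)$ yields the inclusion
\[
\{x\in J:\Hol(C,x)=\alpha\}\subset \pi\!\left\{\omega\in\Sigma:\limsup_{n\to\infty}S_{n}\psi(\omega)/S_{n}\varphi(\omega)\ge\alpha\right\}=\mathcal{F}^{\#}(\alpha).
\]
By \eqref{eq:multifractal formalism} the set on the right has Hausdorff dimension $-t^{*}(-\alpha)$ for $\alpha\in[\alpha_{0},\alpha_{+}]$, and is empty for $\alpha>\alpha_{+}$; this settles the upper bound and the emptiness claim.

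For the properties of $g$, I would read off the required statements from the thermodynamic formalism recalled in Subsection~\ref{subsec:Dimension-spectrum-of}. Because $\varphi$ and $\psi$ are H\"older continuous potentials on $(\Sigma,\sigma)$, the pressure function $t$ is real-analytic and convex, and is strictly convex (equivalently $t''>0$) precisely when $\alpha_{-}<\alpha_{+}$. In that case the convex conjugate $t^{*}$ is real-analytic and strictly convex on the interior of its effective domain $(-\alpha_{+},-\alpha_{-})$, so $g(\alpha)=-t^{*}(-\alpha)$ is real-analytic and strictly concave on $(\alpha_{-},\alpha_{+})$, i.e.\ $g''<0$ there; positivity on this open interval is the content of \eqref{eq:multifractal formalism measure theoretic}. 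Continuity and concavity on the closed interval $[\alpha_{-},\alpha_{+}]$ then follow from the lower semi-continuity and convexity of the Legendre transform, together with finiteness of $g$ on this compact interval.

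The main (minor) obstacle is the asymmetry between the two halves of the spectrum: Proposition~\ref{prop:upperbound pointwise hoelder} delivers a $\liminf$-type bound, which in turn yields a $\limsup$-type inclusion only when $\alpha\ge\alpha_{0}$, since $\mathcal{F}^{\#}$ is defined via $\limsup$ on the right half and via $\liminf$ on the left half. This is precisely why the left branch has to be handled separately by the more delicate cylinder-covering argument of Proposition~\ref{prop:lower half of spectrum upper bound}, and why the two results together (rather than either alone) give the matching upper bound over the whole interval $[\alpha_{-},\alpha_{+}]$.
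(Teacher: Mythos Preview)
Your proposal is correct and follows essentially the same approach as the paper: the lower bound via Corollary~\ref{cor:lowerbound-almosteverywhere}, the upper bound on $[\alpha_{0},\alpha_{+}]$ and emptiness for $\alpha>\alpha_{+}$ via Proposition~\ref{prop:upperbound pointwise hoelder} together with the inclusion into $\mathcal{F}^{\#}(\alpha)$ and \eqref{eq:multifractal formalism}, and the upper bound on $[\alpha_{-},\alpha_{0}]$ and emptiness for $\alpha<\alpha_{-}$ via Proposition~\ref{prop:lower half of spectrum upper bound}. Your discussion of the asymmetry between the two halves and the properties of $g$ via standard Legendre-transform arguments also matches the paper's treatment (which simply cites the literature for the latter).
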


\begin{proof}
By Corollary \ref{cor:lowerbound-almosteverywhere}, we have $\dim_{H}\left\{ x\in J\mid\Hol(C,x)=\alpha\right\} \ge-t^{*}(-\alpha)$
for $\alpha\in[\alpha_{-},\alpha_{+}]$. By Proposition \ref{prop:upperbound pointwise hoelder}
we have for every $\alpha\in\R$, 
\begin{equation}
\left\{ x\in J\mid\Hol(C,x)=\alpha\right\} \subset\pi\left(\left\{ \omega\in\Sigma\mid\liminf_{n\rightarrow\infty}\frac{S_{n}\psi(\omega)}{S_{n}\varphi(\omega)}\ge\alpha\right\} \right).\label{eq:proof of upper bound}
\end{equation}
We distinguish two cases. If $\alpha\ge\alpha_{0}$, then by the definition
of $\mathcal{F}^{\#}(\alpha)$ we have $\left\{ x\in J\mid\Hol(C,x)=\alpha\right\} \subset\mathcal{F}^{\#}(\alpha)$.
Hence, by (\ref{eq:multifractal formalism}), we have $\dim_{H}\left(\left\{ x\in J\mid\Hol(C,x)=\alpha\right\} \right)\le\dim_{H}(\mathcal{F}^{\#}(\alpha))=-t^{*}(-\alpha)$
for $\alpha\ge\alpha_{0}$. Also, by \eqref{eq:proof of upper bound},
we have $\left\{ x\in J\mid\Hol(C,x)=\alpha\right\} =\emptyset$ if
$\alpha>\alpha_{+}$. For $\alpha\le\alpha_{0}$ the remaining assertions
follow from Proposition \ref{prop:lower half of spectrum upper bound}. For the 
proof of the  well-known properties of $g$ we refer to 
(\cite{pesindimensiontheoryMR1489237}, \cite{MR1738952}, see also \cite{JS13b}).
The proof is complete.  
\end{proof}
\begin{rem}
In \cite{Barany-etal16} the pointwise H\"older exponent of affine
zipper curves generated by contracting affine mappings is investigated.
The multifractal dimension spectrum is obtained only for $\alpha\ge\alpha_{0}$.
In a recent paper of  Allaart (\cite{Allaart17}) the complete
multifractal spectrum of pointwise H\"older exponents for curves
associated with selfsimilar measures is obtained.
\end{rem}

\section{Global H\"older continuity\label{sec:Global-Hlder-continuity}}

In this section we investigate the global H\"older continuity of
the elements of $\T$. The first statement of the next theorem has
been obtained for the Minkowski's question mark function in \cite{MR2444218},
for distributions of conformal iterated function systems satisfying
the separating condition in \cite{MR2525939}, and for expanding circle
diffeomorphisms in \cite{MR2576266}. 
\begin{thm}
\label{thm:alpha-minus hoelder}Suppose that $(f_{i})_{i\in I}$ satisfies
the open set condition.  Then we have the following. 
\begin{enumerate}
\item $T_{\vec{p}}\in\mathcal{C}^{\alpha_{-}}(\overline{\R})$.
\item $\T\subset\bigcap_{\alpha<\alpha_{-}}\mathcal{C}^{\alpha}(\overline{\R})$. 
\end{enumerate}
\end{thm}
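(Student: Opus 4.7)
The plan is to establish both assertions from a single estimate
\[
p_\omega \le C \cdot \diam(\pi([\omega]))^{\alpha_-}, \qquad \omega \in I^*,
\]
which follows by combining the inequality \eqref{eq:uniform psi phi estimate} with the bounded distortion Lemma~\ref{lem:bounded distortion} and the observation that $\diam(\pi([\omega])) \asymp e^{S_{|\omega|}\varphi(\overline{\omega})}$ by expansivity of $(f_i)_{i\in I}$. This says that the Bernoulli mass of a cylinder is controlled by the $\alpha_-$-th power of the diameter of its image in $\overline{\R}$.

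The common geometric input for both parts is a cylinder covering: for $x, y \in J$ with $|x-y|$ small, I would exhibit cylinders $\nu, \nu' \in I^n$ (at most two) such that $[x,y]\cap J \subset \pi([\nu]) \cup \pi([\nu'])$ and $\diam(\pi([\nu])), \diam(\pi([\nu'])) \asymp |x-y|$; by expansivity, the level $n$ then satisfies $n \asymp \log(1/|x-y|)$. This is exactly the construction carried out in the proof of Proposition~\ref{prop:lower half of spectrum upper bound}, where the auxiliary cylinders $\nu_k, \nu_k'$ are built and their diameters are compared via \eqref{eq:comparable} and \eqref{eq:comparability lk}. Points in $\overline{\R} \setminus J$ do not cause trouble because every element of $\T$ and $T_{\vec{p}}$ is locally constant on $\R\setminus J$ (Fact~\ref{fact:variespricisely}), so a point lying in a gap can be replaced by the nearer endpoint of that gap at no cost; the neighborhoods $V^\pm$, on which $T_{\vec{p}}$ equals $0$ and $1$ respectively, are handled trivially since they lie at positive mutual distance.

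For (1), I would use $T_{\vec{p}} = F_{\vec{p}}$ from Lemma~\ref{lem:unqiueness fixedpoint}, which makes $T_{\vec{p}}$ monotone, so that
\[
|T_{\vec{p}}(x) - T_{\vec{p}}(y)| = \tilde{\mu}_{\vec{p}}((y,x]\cap J) \le \tilde{\mu}_{\vec{p}}(\pi([\nu])) + \tilde{\mu}_{\vec{p}}(\pi([\nu'])).
\]
Under the open set condition each summand on the right is at most a uniform multiple of $p_\nu$, respectively $p_{\nu'}$, because $\pi^{-1}(\pi([\omega])) \setminus [\omega]$ consists only of boundary identifications, which form a $\mu_{\vec{p}}$-null set. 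The main estimate then yields $|T_{\vec{p}}(x) - T_{\vec{p}}(y)| \le C' |x-y|^{\alpha_-}$, proving (1).

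For (2), fix a non-trivial $C = \sum_{\vec{n}} \beta_{\vec{n}} C_{\vec{n}} \in \T$ and $\alpha < \alpha_-$. Lemmas~\ref{lem:functional equation via cocycle} and~\ref{lem:growth of cocycle} together supply constants $K'$ and $q$, depending only on the finite support of $(\beta_{\vec{n}})$, such that for every $\nu \in I^n$ and all $v_1, v_2 \in f_\nu^{-1}(\overline{O})$,
\[
|C(v_1) - C(v_2)| \le K'\, p_\nu\, n^q.
\]
Combining with the main estimate gives $|C(v_1) - C(v_2)| \le K'' n^q \diam(\pi([\nu]))^{\alpha_-}$, and the cylinder covering above then yields
\[
|C(x) - C(y)| \le K''' (\log(1/|x-y|))^q |x-y|^{\alpha_-}.
\]
Since $\alpha < \alpha_-$ strictly, the logarithmic factor is absorbed into $|x-y|^{\alpha_- - \alpha}$ for small $|x-y|$, and the $\alpha$-Hölder bound follows; for large $|x-y|$ the required bound is immediate from the boundedness of $C$. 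The hard part will be the cylinder covering step itself: under the separating condition a single cylinder would suffice, whereas under the open set condition one is forced to work with two adjacent cylinders and must verify the diameter comparability via \eqref{eq:comparability lk}, so I would lift that construction from the proof of Proposition~\ref{prop:lower half of spectrum upper bound} almost verbatim.
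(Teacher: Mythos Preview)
Your proposal is correct and follows essentially the same route as the paper. Both arguments rest on the estimate $p_\omega \le C\,\e^{\alpha_- S_{|\omega|}\varphi(\overline{\omega})}$ from \eqref{eq:uniform psi phi estimate} together with a two-cylinder covering of $[x,y]\cap J$ by images of cylinders of diameter $\asymp |x-y|$, with the passage from (1) to (2) obtained by inserting the polynomial factor $|\tau|^{|\vec{n}|}$ from Lemma~\ref{lem:growth of cocycle}.

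The presentational differences are minor. The paper builds its covering directly from the longest common prefix of codings of $x$ and $y$: it takes $\omega$ with $x,y\in f_\omega^{-1}(\overline{O})$ but $f_i(f_\omega(x)),f_j(f_\omega(y))\in\overline{O}$ for $i<j$, and in the adjacent case $j=i+1$ extends by runs of $(s+1)$ and $1$ to reach the scale of $|x-y|$, splitting the increment at the shared boundary point $\xi$ where $T_{\vec p}$ (hence every $C_{\vec n}$) is constant. You instead import the covering $B(x,r)\cap J\subset\pi([\nu])\cup\pi([\nu'])$ from the proof of Proposition~\ref{prop:lower half of spectrum upper bound}; this is the same construction viewed from one endpoint rather than from the branching point. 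For part (1), you route the bound through $T_{\vec p}=F_{\vec p}$ and $\tilde\mu_{\vec p}(\pi([\omega]))=p_\omega$ (valid under the open set condition since the overlap is countable), whereas the paper writes the equivalent inequality $|T_{\vec p}(x)-T_{\vec p}(\xi)|\le p_\tau$ directly; both are just monotonicity of $T_{\vec p}$. When you carry this out, be explicit about the splitting point between $\pi([\nu])$ and $\pi([\nu'])$ so that Lemma~\ref{lem:functional equation via cocycle} applies to each half separately, and note that the gap between them lies in $\R\setminus J$ so $C$ is constant there.
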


\begin{proof}
We only verify the desired H\"older continuity at points $x,y\in J$.
That this is sufficient can be seen as follows. If $y\in\overline{\R}\setminus J$,
then either there exists $u\in J$ between $x$ and $y$ and the desired
H\"older continuity follows from the triangle inequality, or we have
$T_{\vec{p}}(x)=T_{\vec{p}}(y)$ (resp. $C(x)=C(y)$). We may assume
that $O=(\Fix(f_{1}),\Fix(f_{s+1}))$, where $\Fix(f_{k})$
denotes the unique fixed point of $f_{k}$ in $\R$. 

Let $x,y\in J$ with $x<y$. Let $\omega\in I^{*}\cup\{\emptyset\}$
and $i,j\in I$ with $i<j$ such that $f_{\omega}(x)\in\overline{O}$,
$f_{\omega}(y)\in\overline{O}$ and $f_{i}(f_{\omega}(x))\in\overline{O}$
and $f_{j}(f_{\omega}(y))\in\overline{O}$. Note that such $(\omega,i,j)$
always exists because $x,y\in J$ and $x<y$. 

We first consider the case when $j=i+1$. Let 
\[
\ell:=\sup\left\{ n\ge0\mid f_{s+1}^{n}(f_{i}(f_{\omega}(x)))\in\overline{O}\right\} \quad\text{and}\quad\ell':=\sup\left\{ n\ge0\mid f_{1}^{n}(f_{j}(f_{\omega}(y)))\in\overline{O}\right\} .
\]

We define 
\[
\xi:=\max(f_{\omega i})^{-1}(\overline{O})\quad\text{and}\quad\xi':=\min(f_{\omega j})^{-1}(\overline{O}).
\]
We will verify that there exists a uniform constant $D'\ge1$ such
that
\begin{equation}
\left|T_{\vec{p}}(x)-T_{\vec{p}}(\xi)\right|\le C(\varphi,\psi_{\vec{p}})D'd(x,\xi)^{\alpha_{-}(\vec{p})}\label{eq:osc estimate}
\end{equation}
and 
\begin{equation}
\left|T_{\vec{p}}(\xi')-T_{\vec{p}}(y)\right|\le C(\varphi,\psi_{\vec{p}})D'd(\xi',y)^{\alpha_{-}(\vec{p})}.\label{eq:osc estimate-1}
\end{equation}
By the triangle inequality we have that 
\[
\left|T_{\vec{p}}(x)-T_{\vec{p}}(y)\right|\le\left|T_{\vec{p}}(x)-T_{\vec{p}}(\xi)\right|+\left|T_{\vec{p}}(\xi)-T_{\vec{p}}(\xi')\right|+\left|T_{\vec{p}}(\xi')-T_{\vec{p}}(y)\right|,
\]
which proves the first assertion in the case when $j=i+1$ because
$T_{\vec{p}}$ is constant on $[\xi,\xi']$. 

We only verify (\ref{eq:osc estimate}), the proof of (\ref{eq:osc estimate-1})
is completely analogous. To prove (\ref{eq:osc estimate}) we may assume
that $\ell<\infty$ because if $\ell=\infty$ then $x=\xi$ and (\ref{eq:osc estimate})
holds trivially. By the definition of $\ell$ there exists $1\le k<s+1$
such that 
\[
x\in f_{\omega}^{-1}f_{i}^{-1}f_{s+1}^{-\ell}f_{k}^{-1}(\overline{O})\le f_{\omega}^{-1}f_{i}^{-1}f_{s+1}^{-\ell}f_{s+1}^{-1}(\overline{O})\subset f_{\omega}^{-1}f_{i}^{-1}(\overline{O}).
\]
We conclude that 
\begin{equation}
d(x,\xi)\ge\diam\left(f_{\omega}^{-1}f_{i}^{-1}f_{s+1}^{-\ell}f_{s+1}^{-1}(\overline{O})\right).\label{eq:gapinterval}
\end{equation}
Define $\tau:=\omega i(s+1)^{\ell}$. Clearly, we have $[x,\xi]\subset f_{\tau}^{-1}(\overline{O})$.
Since the open set condition holds, 
\begin{equation}
\left|T_{\vec{p}}(x)-T_{\vec{p}}(\xi)\right|\le p_{\tau}.\label{eq:estimate for Fp}
\end{equation}
By (\ref{eq:uniform psi phi estimate}) there exists a constant $C(\varphi,\psi_{\vec{p}})$
such that 
\begin{equation}\label{eq:psiphibound1}
p_{\tau}=\e^{S_{\left|\tau\right|}\psi_{\vec{p}}(\overline{\tau})}\le C(\varphi,\psi_{\vec{p}})\e^{\alpha_{-}(\vec{p})S_{|\tau|}\varphi(\overline{\tau})}.
\end{equation}
By the bounded distortion property (Lemma \ref{lem:bounded distortion})
and (\ref{eq:gapinterval}), there exists a uniform constant $D'\ge1$
such that $\e^{\alpha_{-}(\vec{p})S_{|\tau|}\varphi(\overline{\tau})}\le D'd(x,\xi)^{\alpha_{-}(\vec{p})}.$
We have thus shown (\ref{eq:osc estimate}).  It remains to consider
the case when $j>i+1$. Using that $[x,y]\subset f_{\omega}^{-1}(\overline{O})$
we obtain as above 
\[
\left|T_{\vec{p}}(x)-T_{\vec{p}}(y)\right|\le p_{\omega}\le C(\varphi,\psi_{\vec{p}})\e^{\alpha_{-}(\vec{p})S_{|\omega|}\varphi(\overline{\omega})}.
\]
Here, we set $p_{\emptyset}=1$ and $S_{|\emptyset|}\varphi
(\overline{\emptyset}):=0$. By our assumption that $j>i+1$ we have 
$d(x,y)\ge\diam(f_{\omega}^{-1}f_{i+1}^{-1}(O))$,
which implies that there exists a constant $D''\ge1$ such that 
\[
\e^{\alpha_{-}(\vec{p})S_{|\tau|}\varphi(\overline{\omega})}\le 
D''d(x,y)^{\alpha_{-}(\vec{p})}.
\]
The proof of the first assertion is complete.  To prove the second
assertion, it is sufficient to verify it for $C_{\vec{n}}$ for every
$\vec{n}\in\N_0^{s}$. To this end, we replace the estimate in (\ref{eq:estimate 
for Fp})
by the following estimate.  By Lemma \ref{lem:functional equation via cocycle}
and Lemma \ref{lem:growth of cocycle} there exists a constant $K'\ge1$
(which only depends on $\vec{p}$ and $|\vec{n}|$) such that 
\begin{equation} \label{eq:Cnbound}
\left|C_{\vec{n}}(x)-C_{\vec{n}}(\xi)\right|\le K'|\tau|^{|\vec{n}|}p_{\tau}.
\end{equation}
Then proceeding as above, for every $\alpha<\alpha_{-}(\vec{p})$
there exists $D'''\ge1$ such that 
\begin{equation}
\left|C_{\vec{n}}(x)-C_{\vec{n}}(\xi)\right|\le 
K'|\tau|^{|\vec{n}|}C(\varphi,\psi_{\vec{p}})
\e^{\alpha_{-}(\vec{p})S_{|\tau|}\varphi(\overline{\tau})}\le 
K'|\tau|^{|\vec{n}|}C(\varphi,\psi_{\vec{p}})D'''d(x,\xi)^{\alpha}\e^{(\alpha_{-}(\vec{p})-\alpha)S_{|\tau|}\varphi(\overline{\tau})}.\label{eq:hoelder
 estimate for higher order}
\end{equation}
Since $\alpha-\alpha_{-}(\vec{p})<0$, we have 
$K'|\tau|^{|\vec{n}|}C(\varphi,\psi_{\vec{p}})D'''\e^{(\alpha_{-}(\vec{p})-\alpha)S_{|\tau|}\varphi(\overline{\tau})}\rightarrow0$
uniformly as $|\tau|\rightarrow\infty$. Hence, $C_{\vec{n}}$ is
$\alpha$-H\"older continuous. 
\end{proof}
\begin{rem}
\label{rem:convex lipschitz} By the bounded distortion property of $(f_i)$ 
(Lemma \ref{lem:bounded distortion}) it is not difficult to see that 
$|\tau|\asymp-\log d(x,\xi)$. By combining this estimate with  
\eqref{eq:psiphibound1} and 
\eqref{eq:Cnbound}, 
we can derive that $|C_{\vec{n}}(x)-C_{\vec{n}}(y)|\ll(-\log 
d(x,y))^{|\vec{n}|}d(x,y)^{\alpha_{-}(\vec{p})}$.
If $\alpha_{-}=1$ this implies that $C_{\vec{n}}$ is convex Lipschitz
(\cite{MR860394}). In fact, this property was derived in \cite{MR860394}
for the classical Takagi function, and for the higher order derivatives
of the Lebesgue singular function in \cite{MR2212280}. The property
of convex Lipschitz can be used to prove that the graph of $C_{\vec{n}}$
has Hausdorff dimension one if $\alpha_{-}=1$. 
\end{rem}

Recall that we have $\alpha_{-}\le\delta:=\dim_{H}(J)$ with equality
if and only if $\alpha_{-}=\alpha_{+}$ (\cite[Chapter 21, see in particular, Figure 17b]{pesindimensiontheoryMR1489237}).
\begin{cor}
\label{cor:optimal hoelder continuity}Suppose that $(f_{i})_{i\in I}$
satisfies the open set condition.  Then for every non-trivial  $C\in\T$
we have 
\[
\alpha_{-}=\sup\left\{ \alpha\ge0\mid C\in\mathcal{C}^{\alpha}(\overline{\R})\right\} \le\delta.
\]
The equality $\alpha_{-}=\alpha_{+}$ occurs if and only if $T_{\vec{p}}\in\mathcal{C}^{\delta}(\overline{\R})$. 
\end{cor}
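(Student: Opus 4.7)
The plan is to combine the positive regularity result of Theorem~\ref{thm:alpha-minus hoelder} with the existence of points of minimal pointwise H\"older exponent provided by the multifractal analysis. The inequality $\sup\{\alpha\ge 0\mid C\in\mathcal{C}^{\alpha}(\overline{\R})\}\ge\alpha_{-}$ is immediate from Theorem~\ref{thm:alpha-minus hoelder}(2). For the reverse inequality, I would argue by contradiction: if $C\in\mathcal{C}^{\beta}(\overline{\R})$ for some $\beta>\alpha_{-}$, then the characterisation \eqref{eq:hoelder exponent as liminf} of the pointwise H\"older exponent forces $\Hol(C,x)\ge\beta>\alpha_{-}$ for every $x\in\overline{\R}$. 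This would contradict the existence of a point $x\in J$ with $\Hol(C,x)=\alpha_{-}$, which is supplied by Theorem~\ref{thm:multifrac}. The bound $\alpha_{-}\le\delta$ is the standard thermodynamic fact recalled immediately before the statement of the corollary.

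The delicate point is the non-emptiness of the level set at the boundary exponent $\alpha_{-}$, since the dimension formula $\dim_{H}\{x\in J\mid\Hol(C,x)=\alpha_{-}\}=-t^{*}(-\alpha_{-})$ alone does not rule out an empty set when the right-hand side vanishes. I would extract non-emptiness from the proof of Corollary~\ref{cor:lowerbound-almosteverywhere}: that argument produces an ergodic Borel probability measure $\mu$ on $\Sigma$ with $\int\psi\,d\mu/\int\varphi\,d\mu=\alpha_{-}$ such that $\mu\circ\pi^{-1}\bigl(\{x\in J\mid\Hol(C,x)=\alpha_{-}\}\bigr)=1$. Since $\mu\circ\pi^{-1}$ is a probability measure, the level set must be non-empty, completing the first assertion.

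For the dichotomy in the second assertion, I would specialise the first assertion to the non-trivial element $C=T_{\vec{p}}=C_{\vec{0}}\in\T$. If $\alpha_{-}=\alpha_{+}$, the cited thermodynamic fact gives $\alpha_{-}=\delta$, and Theorem~\ref{thm:alpha-minus hoelder}(1) yields $T_{\vec{p}}\in\mathcal{C}^{\alpha_{-}}(\overline{\R})=\mathcal{C}^{\delta}(\overline{\R})$. Conversely, if $T_{\vec{p}}\in\mathcal{C}^{\delta}(\overline{\R})$, then the first assertion (applied to $T_{\vec{p}}$) forces $\delta\le\alpha_{-}$; combined with $\alpha_{-}\le\delta$ this gives $\alpha_{-}=\delta$, which by the cited fact upgrades to $\alpha_{-}=\alpha_{+}$. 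The only step that is not entirely routine is the non-emptiness of the boundary level set addressed in the second paragraph; everything else is a direct assembly of Theorems~\ref{thm:alpha-minus hoelder} and \ref{thm:multifrac}.
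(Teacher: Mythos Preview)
Your overall approach matches the paper's: both assemble Theorem~\ref{thm:alpha-minus hoelder} (for the lower bound $\sup\ge\alpha_-$) with Theorem~\ref{thm:multifrac} (for the upper bound), and both handle the dichotomy in the second assertion identically.

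There is, however, a small gap in your second paragraph. You claim that the proof of Corollary~\ref{cor:lowerbound-almosteverywhere} yields an ergodic $\mu$ with $\mu\circ\pi^{-1}(\{x\in J\mid\Hol(C,x)=\alpha_-\})=1$, but that proof explicitly begins ``Suppose that $-t^*(-\alpha)>0$; otherwise there is nothing to prove'' --- so when $-t^*(-\alpha_-)=0$ (which does occur, e.g.\ when the infimum defining $\alpha_-$ is attained only on a single periodic orbit), the argument you cite establishes nothing about where $\mu$ sits. In particular, the step invoking \cite{MR1479016} for $\lim n^{-1}\log\delta_n=0$ and the application of Proposition~\ref{prop:lower bound} are not carried out in that case.

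The fix is simpler than your proposed route: you do not actually need a point with $\Hol(C,x)=\alpha_-$; you only need, for each $\beta>\alpha_-$, a point with $\Hol(C,x)<\beta$. If $\alpha_-<\alpha_+$, take any $\alpha\in(\alpha_-,\min(\beta,\alpha_+))$; Theorem~\ref{thm:multifrac} gives $\dim_H\{\Hol(C,x)=\alpha\}=-t^*(-\alpha)>0$, so this level set is non-empty. If $\alpha_-=\alpha_+$, Theorem~\ref{thm:multifrac} forces $\Hol(C,x)\in\{\alpha_-\}$ for every $x\in J$ (the level sets outside $[\alpha_-,\alpha_+]$ are empty and $\Hol(C,x)<\infty$ by Proposition~\ref{prop:upperbound pointwise hoelder}), so again you are done. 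Alternatively, bypass Theorem~\ref{thm:multifrac} entirely for the upper bound: pick $\omega\in\Sigma$ attaining the infimum in the definition of $\alpha_-$ (such $\omega$ exists by \cite{MR1738952}) and apply Proposition~\ref{prop:upperbound pointwise hoelder} directly to get $\Hol(C,\pi(\omega))\le\alpha_-$.
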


\begin{proof}
The first assertion follows from Theorem  \ref{thm:multifrac}
and Theorem \ref{thm:alpha-minus hoelder}. Now suppose that $\alpha_{-}=\alpha_{+}$.
Hence, $\alpha_{-}=\delta$ and $T_{\vec{p}}\in\mathcal{C}^{\delta}(\overline{\R})$
by Theorem \ref{thm:alpha-minus hoelder}. Conversely, suppose that
$T_{\vec{p}}\in\mathcal{C}^{\delta}(\overline{\R})$. Then, by the
first assertion of Corollary \ref{cor:optimal hoelder continuity},
we have $\alpha_{-}\ge\delta$. Hence, $\alpha_{-}=\alpha_{+}$. 
\end{proof}
\begin{cor}
\label{cor:transition operator chaotic on some Calpha}Suppose that
$(f_{i})_{i\in I}$ satisfies the open set condition. If $\alpha_{-}<1$
then, for each $1>\alpha>\alpha_{-}$ there exists $\phi\in\mathcal{C}^{\alpha}\left(\overline{\R}\right)$
such that $\Vert M_{\vec{p}}^{n}\phi\Vert_{\alpha}\rightarrow\infty$,
as $n\rightarrow\infty$. 
\end{cor}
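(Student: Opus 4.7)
The plan is to argue by contradiction, leveraging the sharp Hölder-regularity ceiling for $T_{\vec p}$ furnished by Corollary \ref{cor:optimal hoelder continuity}. Since $C_{\vec 0} = T_{\vec p}$ is a non-trivial element of $\T$, that corollary gives $\alpha_{-} = \sup\{\alpha\ge 0 \mid T_{\vec p} \in \mathcal{C}^{\alpha}(\overline{\R})\}$. In particular, for every $\alpha$ with $\alpha_{-} < \alpha < 1$ we have $T_{\vec p} \notin \mathcal{C}^{\alpha}(\overline{\R})$, and I would use this failure of regularity to force blow-up of the iterated $\alpha$-Hölder norms.

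First I would fix a convenient test function $\phi \in \mathcal{C}^{\alpha}(\overline{\R})$ with $\phi(+\infty) = 1$ and $\phi(-\infty) = 0$. Such a $\phi$ exists because $\alpha < 1$: every $d$-Lipschitz function on the compact space $\overline{\R}$ is automatically $\alpha$-Hölder continuous, and the explicit choice $\phi(x) := d(x,-\infty)/d(+\infty,-\infty)$ (or any continuous monotone connector between the two endpoints) works. For this $\phi$, the Ljubich-type convergence derived in Section \ref{subsec:General-results} simplifies to
\[
\|M_{\vec p}^{n}\phi - T_{\vec p}\|_{\infty} \longrightarrow 0 \quad \text{as } n \to \infty.
\]

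Now suppose, for a contradiction, that $\|M_{\vec p}^{n}\phi\|_{\alpha}$ does not tend to infinity. Then along some subsequence $(n_k)$ one has $\|M_{\vec p}^{n_k}\phi\|_{\alpha} \le K$ for some finite constant $K$. For every $x \ne y$ in $\overline{\R}$, the inequality $|M_{\vec p}^{n_k}\phi(x) - M_{\vec p}^{n_k}\phi(y)| \le K\, d(x,y)^{\alpha}$ passes to the pointwise limit, using the uniform convergence above, to yield $|T_{\vec p}(x) - T_{\vec p}(y)| \le K\, d(x,y)^{\alpha}$. Hence $T_{\vec p} \in \mathcal{C}^{\alpha}(\overline{\R})$, contradicting the ceiling supplied by Corollary \ref{cor:optimal hoelder continuity}. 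This forces $\|M_{\vec p}^{n}\phi\|_{\alpha} \to \infty$.

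The argument is short once the two ingredients—the uniform convergence of $M_{\vec p}^{n}\phi$ to a non-zero multiple of $T_{\vec p}$ plus a constant, and the fact that $T_{\vec p} \notin \mathcal{C}^{\alpha}(\overline{\R})$—are in hand, so I do not expect a serious obstacle. The one detail that requires genuine attention is the asymmetric choice of boundary values $\phi(\pm\infty)=0,1$, which is exactly what guarantees that the sup-norm limit is $T_{\vec p}$ rather than a constant (the latter limit would be $\alpha$-Hölder trivially and yield no contradiction).
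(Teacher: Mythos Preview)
Your argument is correct and matches the paper's own proof essentially line by line: choose $\phi\in\mathcal{C}^{\alpha}(\overline{\R})$ with $\phi(+\infty)=1$, $\phi(-\infty)=0$, use $\Vert M_{\vec p}^{n}\phi-T_{\vec p}\Vert_{\infty}\to 0$, and pass a bounded subsequence of $\alpha$-H\"older norms to the limit to contradict $T_{\vec p}\notin\mathcal{C}^{\alpha}(\overline{\R})$ (the paper cites Theorem~\ref{intro thm:global hoelder}, which packages Corollary~\ref{cor:optimal hoelder continuity}).
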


\begin{proof}
Let $\phi\in\mathcal{C}^{\alpha}\left(\overline{\R}\right)$ such
that $\phi(\infty)=1$ and $\phi(-\infty)=0$. Then $\Vert M_{\vec{p}}^{n}\phi-T_{\vec{p}}\Vert_{\infty}\rightarrow0$
as $n\rightarrow\infty$. Now suppose for a contradiction that 
$\liminf_{n\rightarrow \infty}\Vert M_{\vec{p}}^{n}\phi\Vert_{\alpha}<\infty$.
Then there exists a sequence $(n_j)$ in $\N$  tending to infinity and a 
constant $M<\infty$ such that $\Vert M_{\vec{p}}^{n_j}\phi\Vert_\alpha <M$ for 
each $j$.  Hence, $|M_{\vec{p}}^{n_j}\phi(x)-M_{\vec{p}}^{n_j}\phi(y)|\le 
Md(x,y)^\alpha$ for all $j$ and $x,y\in \R$. Letting $j\rightarrow \infty$ 
gives $|T_{\vec{p}}(x)-T_{\vec{p}}(y) |\le M d(x,y)^\alpha$ for all $x,y\in \R$. 
We 
have thus shown that 
$T_{\vec{p}}\in\mathcal{C}^{\alpha}\left(\overline{\R}\right)$
giving the desired contradiction to Theorem \ref{intro thm:global hoelder}. 
\end{proof}
In the following, we will denote $C_{(n)}$ by $C_{n}$ if $s=1$
and $n\in\N_{0}$. 
\begin{prop}
\label{prop:derivatives are not alpha- hoelder}Suppose that $(f_{i})_{i\in I}$
satisfies the open set condition. 
\begin{enumerate}
\item If $\alpha_{-}=\alpha_{+}$ then $\T\cap\mathcal{C}^{\alpha_{-}}(\overline{\R})=\R T_{\vec{p}}$. 
\item If $s=1$ and each $f_{i}$ has constant derivative, then $C_{n}\notin$
$\mathcal{C}^{\alpha_{-}}(\overline{\R})$ for every $n\ge1$.
\end{enumerate}
\end{prop}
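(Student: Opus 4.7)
My plan for (1): inclusion $\supset$ is immediate from Theorem~\ref{thm:alpha-minus hoelder}(1) together with $T_{\vec{p}}=C_{\vec{0}}$. For $\subset$, suppose $C=\sum\beta_{\vec{n}}C_{\vec{n}}\in\T\cap\mathcal{C}^{\alpha_{-}}(\overline{\R})$ has some $\beta_{\vec{n}}\neq 0$ with $|\vec{n}|\ge 1$. Let $\vec{n}_0$ be the componentwise maximum of $\{\vec{n}:\beta_{\vec{n}}\neq 0\}$, set $n:=|\vec{n}_0|$ and $e(\vec{q}):=\sum_{i=1}^{s}q_i(n+1)^{i-1}$; the base-$(n+1)$ encoding makes $\vec{q}\mapsto e(\vec{q})$ injective on $\{\vec{q}\le\vec{n}_0\}$. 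Pick $\vec{n}_{\max}$ maximizing $e$ among contributing indices, so $e(\vec{n}_{\max})\ge 1$ and $e(\vec{n}_{\max})>e(\vec{q})$ for every other contributing $\vec{q}$. Fix $x_0\in J$ and apply Lemma~\ref{lem:growth of solutions} with this $\vec{n}_0$ to obtain sequences $a_k=f_{\tau\xi_k}^{-1}(a)$, $b_k=f_{\tau\xi_k}^{-1}(b)$, with $a,b\in J\cap O$ chosen via Lemma~\ref{lem:L is perfect} so that $u_{\vec{0}}(a,b)\neq 0$, satisfying $u_{\vec{q}}(a_k,b_k)/u_{\vec{0}}(a_k,b_k)\asymp k^{e(\vec{q})}$ for all $\vec{q}\le\vec{n}_0$. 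Distinct exponents yield, via the triangle inequality, $|C(a_k)-C(b_k)|\ge c\,k^{e(\vec{n}_{\max})}|u_{\vec{0}}(a_k,b_k)|$ for some $c>0$ and all large $k$. The hypothesis $\alpha_{-}=\alpha_{+}$ gives cohomology of $\alpha_{-}\varphi$ and $\psi_{\vec{p}}$, hence $p_{\tau\xi_k}\asymp e^{\alpha_{-}S_{|\tau\xi_k|}\varphi(\overline{\tau\xi_k})}$ uniformly; combining this with the iterated functional equation $u_{\vec{0}}(a_k,b_k)=p_{\tau\xi_k}u_{\vec{0}}(a,b)$ and the bounded-distortion estimate $|a_k-b_k|\asymp e^{S_{|\tau\xi_k|}\varphi(\overline{\tau\xi_k})}|a-b|$ (Lemma~\ref{lem:bounded distortion}) yields $|u_{\vec{0}}(a_k,b_k)|\asymp|a_k-b_k|^{\alpha_{-}}$. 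Therefore $|C(a_k)-C(b_k)|/|a_k-b_k|^{\alpha_{-}}\gtrsim k^{e(\vec{n}_{\max})}\to\infty$, contradicting $C\in\mathcal{C}^{\alpha_{-}}(\overline{\R})$.

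For (2), with $s=1$ and $f_i'\equiv a_i>1$, the quantity $\alpha_{-}=\min_i(-\log p_i/\log a_i)$ is attained at some $i^*\in\{1,2\}$ with $p_{i^*}a_{i^*}^{\alpha_{-}}=1$. I present the case $i^*=1$; the case $i^*=2$ is identical up to a sign $-n$ on the subdiagonal of $A_0(\overline{2},1)$, which does not affect the magnitude estimates. Let $x^*\in J$ be the fixed point of $f_1$, choose $a,b\in J\cap O$ with $T_p(a)\neq T_p(b)$ (Lemma~\ref{lem:L is perfect}), and set $a_k:=f_1^{-k}(a)$, $b_k:=f_1^{-k}(b)$, so that $|a_k-b_k|=a_1^{-k}|a-b|$ and $a_k,b_k\to x^*$. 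The matrix $M:=A_0(\overline{1},1)$ has $M_{n,n}=p_1$, $M_{n,n-1}=n$, and a straightforward induction on $k$ gives $(M^k)_{n,n-d}=\binom{k}{d}\tfrac{n!}{(n-d)!}p_1^{k-d}$ for $0\le d\le n$. Applying Lemma~\ref{lem:functional equation via cocycle} with $\omega=1^k$ yields $u_n(a_k,b_k)=\sum_{d=0}^n\binom{k}{d}\tfrac{n!}{(n-d)!}p_1^{k-d}u_{n-d}(a,b)$; the $d=n$ term is of order $k^np_1^{k-n}u_0(a,b)$ and dominates every $d<n$ contribution by the factor $(k/p_1)^{n-d}\to\infty$. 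Since $u_0(a,b)\neq 0$ and $p_1a_1^{\alpha_{-}}=1$, we conclude $|u_n(a_k,b_k)|/|a_k-b_k|^{\alpha_{-}}\sim k^n|u_0(a,b)|/(p_1^n|a-b|^{\alpha_{-}})\to\infty$, so $C_n\notin\mathcal{C}^{\alpha_{-}}(\overline{\R})$ for every $n\ge 1$.

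The main technical hurdle in both parts is certifying that the leading polynomial-in-$k$ growth is not cancelled by lower-order contributions. In (1) this is secured by the strict separation of the exponents $e(\vec{q})$ afforded by the base-$(n+1)$ encoding, so that, even though the individual ratios $u_{\vec{q}}/u_{\vec{0}}$ are only two-sided bounded rather than sharply asymptotic, the dominant index $\vec{n}_{\max}$ still produces $|\beta_{\vec{n}_{\max}}|K^{-1}k^{e(\vec{n}_{\max})}$ as the genuine leading order. In (2) the explicit binomial formula for $(M^k)_{n,n-d}$ makes dominance transparent. In both cases, the critical-scaling identities $\alpha_{-}\varphi\sim\psi_{\vec{p}}$ (when $\alpha_{-}=\alpha_{+}$) and $p_{i^*}a_{i^*}^{\alpha_{-}}=1$ serve to cancel the exponential factors exactly, revealing the pure polynomial blow-up $k^{e(\vec{n}_{\max})}$ or $k^n$ that precludes $\alpha_{-}$-H\"older regularity.
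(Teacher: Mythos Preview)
Your proof is correct and follows the same approach as the paper: build test pairs as preimages under the words $1^{m_1(k)}\cdots s^{m_s(k)}$ with $m_i(k)=k^{(n+1)^{i-1}}$ (part~1) or $(i^*)^k$ (part~2), apply the cocycle identity of Lemma~\ref{lem:functional equation via cocycle}, and use the critical scaling ($\alpha_-\varphi$ cohomologous to $\psi$, resp.\ $p_{i^*}a_{i^*}^{\alpha_-}=1$) to cancel the exponential factors and expose polynomial-in-$k$ blow-up. The paper streamlines both parts by taking $a=\min J$, $b=\max J$, which forces $u_{\vec{n}}(a,b)=0$ for every $\vec{n}\neq\vec{0}$ (since $T_{\vec{p}}(\min J)\equiv 0$ and $T_{\vec{p}}(\max J)\equiv 1$ for all $\vec{p}$) and collapses $A(\cdot)U(a,b)$ to the single column $A(\cdot)_{\cdot,\vec{0}}$; you instead work with general $a,b$ via Lemma~\ref{lem:growth of solutions} and must argue non-cancellation separately, which you do correctly by exploiting the distinctness of the exponents $e(\vec{q})$.
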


\begin{proof}
We first prove (1). For $\vec{n},\vec{m}\in\N_{0}^{s}$ we use $\preceq$ to 
denote the lexicographical order, that is, we write   
$\vec{n}\preceq\vec{m}$ if $n_k<m_k$ where  $k:=\min \{1\le i \le s\mid n_i 
\neq m_i \}$ or if $\vec{n}=\vec{m}$. 
Since the lexicographical order is a total order, each non-trivial $C\in \T$ 
has a representation  as
$C=\sum_{\vec{n}\preceq \vec{n}_{\max}}\beta_{\vec{n}}C_{\vec{n}}$
with $\beta_{\vec{n}_{\max}}\neq0$. Suppose that  
$n:=\sum_{i=1}^{s}(\vec{n}_{\max})_{i}\ge1$.
Our aim is to prove that $C\notin\mathcal{C}^{\alpha_{-}}(\overline{\R})$.
Let $a:=\min J$ and $b:=\max J$. Define for $k\ge1$ 
\[
m_{i}(k):=k^{(n+1)^{i-1}},\quad1\le i\le s,
\]
and for $\omega(k):=1^{m_{1}(k)}2^{m_{2}(k)}\dots s^{m_{s}(k)}\in I^{\sum_{i=1}^{s}m_{i}(k)}$
we let 
\[
x_{k}:=\left(f_{\omega(k)}\right)^{-1}(a),\quad y_{k}:=\left(f_{\omega(k)}\right)^{-1}(b).
\]
Recall that, since $\alpha_{-}=\alpha_{+}$, we have $\alpha_{-}=\alpha_{+}=\delta=\dim_{H}J$,
and the potentials $\delta\varphi$ and $\psi$ are cohomologous.
It follows that 
\[
\frac{\left|C(x_{k})-C(y_{k})\right|}{d(x_{k},y_{k})^{\alpha_{-}}}\asymp\frac{\left|C(x_{k})-C(y_{k})\right|}{p_{\omega(k)}},\quad\text{as }k\rightarrow\infty.
\]
Since $C_{\vec{n}}(a)-C_{\vec{n}}(b)=\vec{0}$, for $\vec{n}\neq\vec{0}$
and $C_{\vec{0}}(a)-C_{\vec{0}}(b)=1$, we conclude  by Lemma
\ref{lem:functional equation via cocycle} that
\begin{align*}
\frac{C(x_{k})-C(y_{k})}{p_{\omega(k)}} & 
=\sum_{\vec{n}\preceq\vec{n}_{\max}}\beta_{\vec{n}}
\left(A\left(\overline{\omega(k)},\sum_{i=1}^{s}m_{i}(k)\right)U(a,b)\right)_{\vec{n}}
=\sum_{\vec{n}\preceq\vec{n}_{\max}}\beta_{\vec{n}} 
\left(A\left(\overline{\omega(k)},\sum_{i=1}^{s}m_{i}(k)\right)\right)_{\vec{n},\vec{0}}.
\end{align*}
Hence, by Lemma \ref{lem:growth of cocycle} we have
as $k\rightarrow\infty$ 
\[
\frac{\left|C(x_{k})-C(y_{k})\right|}{p_{\omega(k)}}\asymp
\left|\sum_{\vec{n}\preceq\vec{n}_{\max}}\beta_{\vec{n}}\prod_{i=1}^{s}m_{i}(k)^{n_{i}}
\right|\asymp\left|\sum_{\vec{n}\preceq\vec{n}_{\max}}\beta_{\vec{n}}k^{\sum_{i=1}^{s}n_{i}(n+1)^{i-1}}\right|\asymp\left|\beta_{\vec{n}_{\max}}\right|k^{\sum_{i=1}^{s}(\vec{n}_{\max})_{i}(n+1)^{i-1}}.
\]
It follows that $\left|C(x_{k})-C(y_{k})\right|/d(x_{k},y_{k})^{\alpha_{-}}$
is not bounded, as $k\rightarrow\infty$, which implies that $C\notin\mathcal{C}^{\alpha_{-}}(\overline{\R})$.
The proof of (1) is complete.

To prove (2) we can proceed along the same lines. First note that,
by our assumptions on $f_{1}$ and $f_{2}$, $\varphi$
depends only on the first coordinate. We show that  there exists $i\in I$ such 
that 
$\psi(\overline{i})/\varphi(\overline{i})=\alpha_{-}$. First observe that there 
exists  $i\in I$ such that $\psi(\overline{i})/\varphi(\overline{i})=\min 
(\psi / \varphi)$. Thus, for all $n\ge 1$,  $S_n\psi(\overline{i}) / S_n 
\varphi(\overline{i}) =\min 
(\psi / \varphi) $. From the  mediant inequality we derive inductively that  
$S_n\psi / S_n \varphi \ge \min 
(\psi / \varphi) $ which completes the proof that 
$\alpha_-=\psi(\overline{i})/\varphi(\overline{i})$. 
Hence, for the sequences given by $x_{k}:=\left(f_{i}\right)^{-k}(a)$
and $y_{k}:=\left(f_{i}\right)^{-k}(b)$ we have as $k\rightarrow\infty$,
\[
\frac{\left|C_{n}(x_{k})-C_{n}(y_{k})\right|}
{d(x_{k},y_{k})^{\alpha_{-}}}\asymp
\frac{\left|C_{n}(x_{k})-C_{n}(y_{k})\right|}
{p_{i}^{k}}\asymp\left|\left(A(\overline{i},k)U(a,b)\right)_{n}\right|\asymp
 k^{n},
\]
which tends to infinity, as $k\rightarrow\infty$. Again this implies
that $C_{n}\notin\mathcal{C}^{\alpha_{-}}(\overline{\R})$. The proof
is complete.
\end{proof}

\section{Non-differentiability\label{sec:Non-differentiability}}

In this section we investigate the (non-)differentiability of the
elements of $\T$. Note that $\alpha_{-}\le\dim_{H}(J)\le1$. We denote
by Leb the Lebesgue measure on $\left[0,1\right]$. 
\begin{prop}
\label{prop:non-differentiability}Suppose that $(f_{i})_{i\in I}$
satisfies the open set condition.  
\begin{enumerate}
\item If $\alpha_{-}<1$ then there exists a dense subset $E\subset J$
with $\dim_{H}(E)>0$ such that, for every non-trivial $C\in\T$
and $x\in E$, $C$ is not differentiable at $x$. If moreover $\alpha_{+}<1$
then $C\in\T\setminus\{0\}$ is nowhere differentiable on $J$. 
\item If $\alpha_{-}<1$ and $\dim_{H}(J)=1$ then we have, for every $C\in\T$,
$C'(x)=0$ for Leb-a.e. $x\in J$.
\item If $\alpha_{-}=1$ then $\alpha_{+}=1$ and $\dim_{H}(J)=1$. 
If moreover $s=1$ and $f_1'$ and $f_2'$ are constant functions, then $C_{m}$ is 
nowhere differentiable
on $J$, for every $m\ge1$.  
\end{enumerate}
\end{prop}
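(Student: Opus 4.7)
\textbf{(1)} My plan is to set
\[
E:=\pi\Bigl\{\omega\in\Sigma:\liminf_{n\to\infty}S_n\psi(\omega)/S_n\varphi(\omega)<1\Bigr\}.
\]
Proposition~\ref{prop:upperbound pointwise hoelder} yields $\Hol(C,x)<1$ for every non-trivial $C\in\T$ and every $x\in E$, ruling out differentiability at $x$. To obtain density and $\dim_H(E)>0$, I pick $\alpha\in[\alpha_-,1)$ with $\alpha\le\alpha_+$ (non-empty since $\alpha_-<1$) and take the ergodic Gibbs measure $\mu_\beta$ with $-t'(\beta)=\alpha$ (or $\beta=0$ in the degenerate case $\alpha_-=\alpha_+$); by \eqref{eq:multifractal formalism measure theoretic} its projection has full support $J$ (providing density) and Hausdorff dimension $-t^*(-\alpha)>0$, and concentrates on $\mathcal{F}(\alpha)\subset E$. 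The \emph{moreover} clause is immediate: if $\alpha_+<1$, every $\omega\in\Sigma$ satisfies $\liminf_n S_n\psi/S_n\varphi\le\alpha_+<1$, and Proposition~\ref{prop:upperbound pointwise hoelder} gives nowhere differentiability on $J$.

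\textbf{(2)} Under $\alpha_-<1=\dim_H(J)$, the equivalence $\alpha_-=\dim_H(J)\Leftrightarrow\alpha_-=\alpha_+$ forces $\alpha_-<\alpha_+$. The key step is to show that $\tilde\mu_{\vec p}:=\mu_{\vec p}\circ\pi^{-1}$ is singular with respect to the Lebesgue measure, which forces $T'_{\vec p}=0$ Leb-a.e.\ because $T_{\vec p}$ is the distribution function of $\tilde\mu_{\vec p}$ and any absolutely continuous mass would be witnessed by $T'_{\vec p}>0$ on a set of positive Lebesgue measure. Singularity follows by the exact-dimensional identity $\dim_H(\tilde\mu_{\vec p})=\int\psi\,d\mu_{\vec p}\big/\int\varphi\,d\mu_{\vec p}$: under the hypotheses, $\mu_{\vec p}$ cannot coincide with the unique measure $\mu_0$ of maximal dimension (which would force $\alpha_-=\alpha_+=1$), so a strict Jensen-type inequality gives $\dim_H(\tilde\mu_{\vec p})<1$ and hence Leb-singularity. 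For an arbitrary non-trivial $C_{\vec n}\in\T$ with $|\vec n|\ge 1$, I proceed by induction on $|\vec n|$ through the identity
\[
C_{\vec n}=M_{\vec p}C_{\vec n}+\sum_{i=1}^{s}n_i\bigl(C_{\vec n-\vec e_i}\circ f_i-C_{\vec n-\vec e_i}\circ f_{s+1}\bigr):
\]
the inductive hypothesis kills the inhomogeneous sum Leb-a.e.\ (because each $f_i\in\mathcal{C}^{1+\epsilon}$ preserves Leb-null sets), leaving a transfer-operator identity for $C'_{\vec n}$ whose only bounded solution vanishing at $\pm\infty$ is zero.

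\textbf{(3)} The first equalities follow at once: $\alpha_-=1$ together with $\alpha_-\le\dim_H(J)\le 1$ gives $\dim_H(J)=1=\alpha_-$, and the equivalence $\alpha_-=\dim_H(J)\Leftrightarrow\alpha_-=\alpha_+$ then yields $\alpha_+=1$. For the \emph{moreover} clause, with $s=1$, $f_i'\equiv\lambda_i$, and $\alpha_-=1$, the cohomology between $\delta\varphi$ and $\psi$ forces $p_i=\lambda_i^{-1}$, so that $T_{\vec p}$ is affine on $J$ and $|u_0(a,b)|\asymp|a-b|$ for $a,b\in J$. Given $x_0\in J$ with code $\omega\in\pi^{-1}(x_0)$, I plan to produce sequences $a_N,b_N\to x_0$ contained in a common depth-$N$ cylinder $f_{\omega|_N}^{-1}(\overline O)$, so that $|a_N-x_0|,|b_N-x_0|,|a_N-b_N|$ are all of order $p_{\omega|_N}$; by Lemma~\ref{lem:functional equation via cocycle},
\[
C_m(a_N)-C_m(b_N)=\bigl(A_0(\overline{\omega|_N},N)\,U(f_{\omega|_N}(a_N),f_{\omega|_N}(b_N))\bigr)_m.
\]
The representatives $f_{\omega|_N}(a_N),f_{\omega|_N}(b_N)\in\overline O$ are to be chosen via Lemma~\ref{lem:good points are dense} so that the right-hand side has size $\asymp p_{\omega|_N}\cdot N^m$, by isolating the dominant cocycle entry through the Vandermonde mechanism from Lemma~\ref{lem:growth of cocycle}. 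The difference quotient then grows like $N^m$, and comparability of the three distances rules out a finite derivative of $C_m$ at $x_0$.

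The main technical obstacle lies in Part~(3): for codes $\omega$ mixing both generators, the matrix $A_0(\overline{\omega|_N},N)$ only admits the general upper bound of Lemma~\ref{lem:growth of cocycle}, so the Vandermonde-type mechanism underlying Lemma~\ref{lem:good points are dense} must be adapted to the $N$-dependent cocycle in order to preclude accidental cancellations and to guarantee the $N^m$ scaling of the dominant entry.
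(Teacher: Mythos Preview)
Your Part~(1) is essentially the paper's argument: pick $\alpha\in(\alpha_-,\alpha_+)$ with $\alpha<1$, set $E=\mathcal F(\alpha)$, and use Proposition~\ref{prop:upperbound pointwise hoelder} together with \eqref{eq:multifractal formalism measure theoretic}. The degenerate case $\alpha_-=\alpha_+<1$ is absorbed by the ``moreover'' clause.

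Part~(2) has a genuine gap. Your base case for $T_{\vec p}$ is fine (monotone distribution function of a singular measure), but the inductive step for $C_{\vec n}$ with $|\vec n|\ge1$ is circular: to differentiate the identity $C_{\vec n}=M_{\vec p}C_{\vec n}+\ldots$ you need $C_{\vec n}$ to be differentiable at $x$ and at each $f_i(x)$, which is precisely what you are trying to establish. There is no a~priori reason (monotonicity, bounded variation) ensuring $C_{\vec n}$ is differentiable Leb-a.e.\ for $|\vec n|\ge1$, so the ``transfer-operator identity for $C_{\vec n}'$'' never gets off the ground. The paper bypasses this entirely by a one-line pointwise argument: since $t(0)=1$, $t(1)=0$ and $t$ is strictly convex, one has $-t'(0)>1$; by the $\mu_0$-a.e.\ identification of $\Hol(C,\cdot)$ (see Propositions~\ref{prop:upperbound pointwise hoelder}, \ref{prop:lower bound} and the proof of Corollary~\ref{cor:lowerbound-almosteverywhere}) and the equivalence of $\mu_0\circ\pi^{-1}$ with Lebesgue, $\Hol(C,x)>1$ for Leb-a.e.\ $x\in J$, forcing $C'(x)=0$. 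No induction, no singularity of $\tilde\mu_{\vec p}$, and it works for every non-trivial $C\in\T$ at once.

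In Part~(3) you correctly deduce $p_i=\lambda_i^{-1}$, but then reach for Lemma~\ref{lem:good points are dense} and the Vandermonde mechanism, which---as you yourself note---does not control the sign pattern of $A(\omega,N)_{m,0}$ for a generic $\omega$. The paper avoids this obstacle completely by choosing the specific anchor points $a=\min J$, $b=\max J$. Since $T_{\vec p}(a)=0$ and $T_{\vec p}(b)=1$ for every $\vec p$, one has $u_m(a,b)=0$ for all $m\ge1$ and $u_0(a,b)=-1$; hence $U(a,b)$ is supported only at index~$0$, and with $x_n:=f_{\omega|_n}^{-1}(a)$, $y_n:=f_{\omega|_n}^{-1}(b)$ and $p_{\omega|_n}/|x_n-y_n|=(b-a)^{-1}$, the difference quotient reduces exactly to $-A(\omega,n)_{m,0}/(b-a)$. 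The paper then computes the one-step increment
\[
A(\omega,n+1)_{m,0}-A(\omega,n)_{m,0}=m\cdot A(\omega,n)_{m-1,0}\cdot\Bigl(\tfrac{\delta_{1,\omega_{n+1}}}{p_1}-\tfrac{\delta_{2,\omega_{n+1}}}{p_2}\Bigr),
\]
where the last factor lies in $\{1/p_1,-1/p_2\}$ and never vanishes; an induction on $m$ (starting from the explicit formula for $A(\omega,n)_{1,0}$) shows $(A(\omega,n)_{m,0})_n$ fails to converge for every $\omega$. This exact-endpoints trick, which collapses $U(a,b)$ to a single coordinate, is the missing idea in your outline.
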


\begin{proof}
First assume that $\alpha_{-}<1$. If $\alpha_{-}<\alpha_{+}$ then
there exists $\alpha\in(\alpha_{-},\alpha_{+})$ with $\alpha<1$.
Then by Proposition \ref{prop:upperbound pointwise hoelder} we have
$\Hol(C,x)\le\alpha<1$ for all $x\in E:=\pi(\mathcal{F}(\alpha))$,
and $E$ has the desired properties. If moreover $\alpha_{+}<1$ then
we have $\Hol(C,x)\le\alpha_{+}<1$ for every $x\in J$ by Proposition
\ref{prop:upperbound pointwise hoelder}, which completes the proof
of (1). 

To prove (2), let $\alpha_{-}<1$ and observe that $\dim_{H}(J)=1$
implies $\alpha_{-}<\alpha_{+}$. Hence, $t$ is strictly convex.
Since $t(0)=1$ and $t(1)=0$, it follows that $-t'(0)>1$. Thus,
we have $\Hol(C,x)=\int\psi\,\,d\mu_{0}/\int\varphi\,\,d\mu_{0}=-t'(0)>1$
$\mu_{0}$-almost everywhere. Since $\mu_{0}$ is equivalent to Leb,
the assertion in (2) follows. 

Finally, we turn to the proof of (3). Let  $s=1$.  Recall that $\alpha_{-}=1$ 
implies $\alpha_{-}=\alpha_{+}=\dim_H(J)=1$ and
that $\varphi$ is cohomologous to $\psi$. Hence, there exists a continuous 
function $h:\Sigma \rightarrow \R$ such that $\varphi = \psi + h - h\circ 
\sigma $. Since $f_1'$ and $f_2'$ are constant 
functions, $\varphi=\varphi(\omega)$ depends only on the first symbol 
$\omega_1$ of $\omega=(\omega_1,\omega_2,\dots) \in \Sigma$. We conclude that 
for 
every $\omega \in \Sigma$, 
\[ \varphi(\omega)=\varphi(\overline{\omega_1})=\psi(\overline{\omega_1}) + 
h(\overline{\omega_1}) - h\circ \sigma (\overline{\omega_1})=\psi(\omega).
\]
Now,   suppose for a contradiction
that there exists $m\ge1$ and  $x\in J$ such that $C_m$ is differentiable
at $x$ and let $\omega\in\Sigma$ such that $x=\pi(\omega)$. Let
$a:=\min J$ and $b:=\max J$. Define $x_{n}:=(f_{\omega|n})^{-1}(a)$
and $y_{n}:=(f_{\omega|n})^{-1}(b)$, $n\ge1$. We will verify that
the sequence $(\gamma_{n})_{n\ge1}$ given by 
\[
\gamma_{n}:=\frac{C_m(x_{n})-C_m(y_{n})}{x_n -y_n },\quad n\ge1,
\]
is not convergent. Since $x_n \le x \le y_n$ and  $x_{n},y_{n}\rightarrow x$ we 
obtain the
desired contradiction. To prove that $(\gamma_{n})$ is not convergent,
note that for all $n\ge 1$,
\[\frac{p_{\omega_{|n}}}{|x_{n}-y_{n}|}=\frac{
\e^{S_{n}\psi(\omega)-S_{n}\varphi(\omega)}}{b-a}=\frac{1}{b-a}.
\]
Combining with  $C_m(a)=C_m(b)=0$, 
$T(a)-T(b)=1$ and Lemma \ref{lem:functional equation via cocycle}
we obtain
\begin{align*}
\gamma_{n}\cdot(b-a) 
=- \frac{\left(U(x_{n},y_{n})\right)_m}{p_{\omega_{|n}}}=-
\left(A(\omega,n)U(a,b)\right)_m=-A(\omega,n)_{m,0}(T(a)-T(b))=-A(\omega,n)_{m,0}.
\end{align*}
Let $\delta_{i,j}$ denote the Dirac delta function, for $i,j\in I$.
By the definition of the matrix cocycle $A$ we have 
\begin{align}
A(\omega,n)_{1,0} 
=\sum_{i=1}^{n}A(\sigma^{i-1}(\omega),1)_{1,0}=
 \sum_{i=1}^{n}\left(\frac{\delta_{1,\omega_{i}}}{p_{1}}-
 \frac{\delta_{2,\omega_{i}}}{p_{2}}\right).\label{eq:nondiff}
\end{align}
This shows that $(A(\omega,n)_{1,0})$ does not converge as $n\rightarrow 
\infty$ because $\frac{\delta_{1,\omega_{i}}}{p_{1}}-
\frac{\delta_{2,\omega_{i}}}{p_{2}}\in \{1/p_1, -1/p_2 \}$. We now proceed 
inductively to verify that 
$(A(\omega,n)_{m,0})$ does 
not converge as $n\rightarrow \infty$.  It
is easy to see that 
\begin{align*}
A(\omega,n+1)_{m,0} & =A(\omega,n)_{m,0}A(\sigma^{n}(\omega),1)_{0,0}+A(\omega,n)_{m,1}A(\sigma^{n}(\omega),1)_{1,0}\\
 & =A(\omega,n)_{m,0}+A(\omega,n)_{m,1}A(\sigma^{n}(\omega),1)_{1,0}.
\end{align*}
Hence, 
\[
A(\omega,n+1)_{m,0}-A(\omega,n)_{m,0}=A(\omega,n)_{m,1}\cdot A(\sigma^{n}(\omega),1)_{1,0}=A(\omega,n)_{m,1}\cdot\left(\frac{\delta_{1,\omega_{n+1}}}{p_{1}}-\frac{\delta_{2,\omega_{n+1}}}{p_{2}}\right).
\]
Further, we have 
\begin{align*}
A(\omega,n)_{m,1} & =\sum_{1\le i_{1}<\dots<i_{m-1}\le n}\prod_{k=1}^{m-1}A(\sigma^{i_{k}-1}(\omega),1)_{m-k+1,m-k}\\
 & =\sum_{1\le i_{1}<\dots<i_{m-1}\le n}\prod_{k=1}^{m-1}\left(\frac{\delta_{1,\omega_{i_{k}}}}{p_{1}}-\frac{\delta_{2,\omega_{i_{k}}}}{p_{2}}\right)(m-k+1)\\
 & =m\cdot\sum_{1\le i_{1}<\dots<i_{m-1}\le n}\prod_{k=1}^{m-1}\left(\frac{\delta_{1,\omega_{i_{k}}}}{p_{1}}-\frac{\delta_{2,\omega_{i_{k}}}}{p_{2}}\right)(m-k)\\
 & =m\cdot A(\omega,n)_{m-1,0}.
\end{align*}
Therefore, 
\begin{equation}\label{nondiff1}
A(\omega,n+1)_{m,0}-A(\omega,n)_{m,0}=m\cdot A(\omega,n)_{m-1,0}\cdot
\left(\frac{\delta_{1,\omega_{n+1}}}{p_{1}}-\frac{\delta_{2,\omega_{n+1}}}{p_{2}}\right).
\end{equation}
By induction hypothesis, we may assume that $(A(\omega,n)_{m-1,0})$ is not 
convergent as $n \rightarrow \infty$. Since 
$\frac{\delta_{1,\omega_{i}}}{p_{1}}-
\frac{\delta_{2,\omega_{i}}}{p_{2}}\in \{1/p_1, -1/p_2 \}$
 we conclude by (\ref{nondiff1}) that $(A(\omega,n)_{m,0})$ is not 
 convergent 
as $n\rightarrow \infty$. The proof of (3) is
complete.

\end{proof}
\begin{rem}
If $\alpha_{-}=\alpha_{+}=1$, then the nowhere-differentiability
of elements of $\T$ stated in Theorem \ref{prop:non-differentiability}
(3) will be compared with the fact that $T_{\vec{p}}$ is a  
$\mathcal{C}^{1+\epsilon}$-diffeomorphism
by Corollary \ref{cor:conjugacy}.
\end{rem}

\section{Conjugacies between Interval 
maps\label{sec:Conjugaries-between-Interval}}

In this section we show how our results are related to interval conjugacies. 
This section is motivated by the results in \cite{MR2576266}
for conjugacies between expanding $C^{1+\epsilon}$ maps on the
unit interval with finitely many full branches. Note that in \cite{MR2576266}
it is always assumed that the Julia set $J$ is equal to the unit
interval. 

For $\vec{p}\in(0,1)^{s}$ we define the expanding linear maps $g_{1},\dots,g_{s+1}:\overline{\R}\rightarrow\overline{\R}$
which are for $i\in I$ given by 
\[
g_{i}(x):=\frac{1}{p_{i}}\left(x-\sum_{1\le j<i}p_{j}\right).
\]
Clearly, $(g_{1},\dots,g_{s+1})$ satisfies our standing assumptions.
Moreover, $(g_{1},\dots,g_{s+1})$ satisfies the open set condition
with the  open set $O:=(0,1)$ because 
$g_{i}^{-1}(O)=(\sum_{j<i}p_{j},\sum_{j\le i}p_{j})$.
The Julia set of $(g_{1},\dots,g_{s+1})$ is equal to $[0,1]$. We
denote by $\pi_{\vec{p}}:\Sigma\rightarrow[0,1]$ the coding map of
the Julia set of $(g_{1},\dots,g_{s+1})$. Similarly, we define 
\[
g_{\vec{p}}:[0,1]\rightarrow[0,1],\quad g_{\vec{p}}(x):=g_{i}(x),\quad\text{where }i=\min\left\{ j\in I\mid g_{j}(x)\in[0,1]\right\} .
\]
Note that $g_{\vec{p}}$ is the piecewise linear map on $[0,1]$ with
$(s+1)$ full branches and slopes given by $(1/p_{i})_{i\in I}.$ 

Suppose that $(f_{1},\dots,f_{s+1})$ satisfies the open set condition
and suppose that $f_{i}^{-1}(O)\le f_{i+1}^{-1}(O)$ for all $i\in I$.
Denote the Julia set of $(f_{1},\dots,f_{s+1})$ by $J$ and its coding
map by $\pi:\Sigma\rightarrow J$. We also define 
\[
f:J\rightarrow J,\quad f(x):=f_{i}(x),\quad\text{where }i=\min\left\{ j\in I\mid f_{j}(x)\in J\right\} .
\]

Further, we define 
\[
\Phi_{\vec{p}}:J\rightarrow[0,1],\quad\Phi_{\vec{p}}(x):=\pi_{\vec{p}}(\omega)\text{ for some / any }\omega\in\pi^{-1}(x).
\]
Note that $\Phi_{\vec{p}}$ is a well-defined, Borel measurable function
satisfying $\Phi_{\vec{p}}(\max J)=1$ and $\Phi_{\vec{p}}(\min J)=0$.
Let $\mathcal{E}:=\left\{ \omega\in\Sigma\mid\omega\text{ is eventually constant}\right\} $.
For $x\in\pi(\Sigma\setminus\mathcal{E})$ we denote by $\pi^{-1}(x)$
the unique $\omega\in\Sigma\setminus\mathcal{E}$ such that $\pi(\omega)=x$.
For $\omega\in\Sigma\setminus\mathcal{E}$ and $x=\pi(\omega)$ we
have $f(x)=f_{\omega_{1}}(x)$ and thus, 
\[
\pi^{-1}(f(x))=\sigma(\omega).
\]
Further, $\vec{g}_{\vec{p}}(\pi_{\vec{p}}(\omega))=g_{\omega_{1}}(\pi_{\vec{p}}(\omega))$
implies 
\[
\pi_{\vec{p}}\circ\sigma(\omega)=\vec{g}_{\vec{p}}\circ\pi_{\vec{p}}(\omega).
\]
Hence, for $x\in\pi(\Sigma\setminus\mathcal{E})$, 
\begin{equation}
\Phi_{\vec{p}}(f(x))=\pi_{\vec{p}}\circ\pi^{-1}(f(x))=\pi_{\vec{p}}\circ\sigma(\omega)=\vec{g}_{\vec{p}}\circ\pi_{\vec{p}}(\omega)=\vec{g}_{\vec{p}}(\Phi_{\vec{p}}(x)).\label{eq:conjugacy condition}
\end{equation}

If $J$ is an interval, the following lemma appears implicitly in
\cite[Proof of Proposition 1.4]{MR2576266}.
\begin{lem}
\label{lem:T is conjugacy}For every $\vec{p}\in(0,1)^{s}$ we have
that $\Phi_{\vec{p}}=T_{\vec{p}|J}$. 
\end{lem}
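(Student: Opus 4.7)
The strategy is to identify $\Phi_{\vec{p}}$ on $J$ with the distribution function $F_{\vec{p}}$, and then invoke Lemma \ref{lem:unqiueness fixedpoint} which asserts $T_{\vec{p}} = F_{\vec{p}}$. The bridge between $\Phi_{\vec{p}}$ and $F_{\vec{p}}$ is purely order-theoretic: both quantities will be read off from a single lexicographic quantity on $\Sigma$, namely $\mu_{\vec{p}}(\{\tau : \tau \le_{\mathrm{lex}} \omega\})$ where $\omega \in \pi^{-1}(x)$.

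First I would exploit the convention $f_i^{-1}(\overline{O}) \le f_j^{-1}(\overline{O})$ for $i<j$, together with its immediate analogue $g_i^{-1}(\overline{O}) = [\sum_{k<i} p_k, \sum_{k \le i} p_k]$, to show that both coding maps $\pi$ and $\pi_{\vec{p}}$ are monotone with respect to the lexicographic order $\le_{\mathrm{lex}}$ on $\Sigma$. Given $\omega, \tau \in \Sigma$ with first disagreement at coordinate $k$ and $\omega_k < \tau_k$, the points $\pi(\omega)$ and $\pi(\tau)$ lie respectively in $f_{\omega_1}^{-1} \cdots f_{\omega_{k-1}}^{-1}(f_{\omega_k}^{-1}(\overline{O}))$ and $f_{\omega_1}^{-1} \cdots f_{\omega_{k-1}}^{-1}(f_{\tau_k}^{-1}(\overline{O}))$, and the ordering at step $k$ lifts back through the monotone composition $f_{\omega_1}^{-1} \circ \cdots \circ f_{\omega_{k-1}}^{-1}$. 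Equality $\pi(\omega)=\pi(\tau)$ can occur only when these two closed intervals abut, which forces $\omega, \tau$ to lie in the countable set $\mathcal{E}$ of eventually constant sequences. Since $\mu_{\vec{p}}$ is non-atomic, we conclude that on a $\mu_{\vec{p}}$-conull subset of $\Sigma$,
\[
\{\tau \in \Sigma : \pi(\tau) \le \pi(\omega)\} = \{\tau \in \Sigma : \tau \le_{\mathrm{lex}} \omega\},
\]
and the same holds with $\pi$ replaced by $\pi_{\vec{p}}$.

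Second, I would observe that the pushforward $\mu_{\vec{p}} \circ \pi_{\vec{p}}^{-1}$ equals Lebesgue measure on $[0,1]$. This is because on every cylinder the Bernoulli measure gives $\mu_{\vec{p}}([\omega_1 \cdots \omega_n]) = p_{\omega_1} \cdots p_{\omega_n}$, which is exactly the length of the interval $\pi_{\vec{p}}([\omega_1 \cdots \omega_n]) = g_{\omega_1}^{-1} \circ \cdots \circ g_{\omega_n}^{-1}([0,1])$ since each $g_i^{-1}$ is linear of slope $p_i$. Combining, for $x = \pi(\omega)$ with $\omega \in \Sigma \setminus \mathcal{E}$,
\[
F_{\vec{p}}(x) = \mu_{\vec{p}}\bigl(\pi^{-1}\bigl(\{y \le x\}\cap J\bigr)\bigr) = \mu_{\vec{p}}\bigl(\{\tau : \tau \le_{\mathrm{lex}} \omega\}\bigr) = \mu_{\vec{p}}\bigl(\pi_{\vec{p}}^{-1}([0,\pi_{\vec{p}}(\omega)])\bigr) = \pi_{\vec{p}}(\omega) = \Phi_{\vec{p}}(x).
\]

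Third, the remaining countable set $\pi(\mathcal{E}) \subset J$ (the endpoints of the gaps of $J$) is handled directly. Here one must also check that $\Phi_{\vec{p}}$ is well defined, i.e. that two different codings $\omega, \omega' \in \pi^{-1}(x)$ produce the same value $\pi_{\vec{p}}(\omega) = \pi_{\vec{p}}(\omega')$; this is the standard identity that a tail of $1$'s versus a tail of $(s{+}1)$'s in the base-$\vec{p}$ expansion represent the same number (the analogue of $0.0\overline{9} = 0.1\overline{0}$). The identity $\Phi_{\vec{p}}=F_{\vec{p}}$ then extends from $\pi(\Sigma \setminus \mathcal{E})$ to all of $J$ by right-continuity of $F_{\vec{p}}$ and by approximating from above through points in $\pi(\Sigma \setminus \mathcal{E})$. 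Invoking Lemma \ref{lem:unqiueness fixedpoint} completes the identification $\Phi_{\vec{p}} = F_{\vec{p}|J} = T_{\vec{p}|J}$. The only genuinely subtle point is the order-preservation of $\pi$ and its interaction with points of multiple coding; once the lexicographic dictionary is set up, the rest is bookkeeping.
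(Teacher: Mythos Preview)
Your argument is correct and takes a genuinely different route from the paper. The paper proves the lemma dynamically: it extends $\Phi_{\vec{p}}$ to a bounded measurable function $\tilde{\Phi}_{\vec{p}}$ on $\overline{\R}$ with the right boundary values, verifies the fixed-point relation $M_{\vec{p}}\tilde{\Phi}_{\vec{p}}=\tilde{\Phi}_{\vec{p}}$ on $\pi(\Sigma\setminus\mathcal{E})$ directly from the conjugacy relation $\Phi_{\vec{p}}\circ f = g_{\vec{p}}\circ \Phi_{\vec{p}}$, iterates, and then appeals to the convergence $M_{\vec{p}}^{n}h\to T_{\vec{p}}$ from (\ref{eq:Mp convergence measurable}). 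Your approach is order-theoretic and measure-theoretic: you never touch $M_{\vec{p}}$ or the conjugacy relation, but instead identify $\Phi_{\vec{p}}$ with $F_{\vec{p}}$ via the lexicographic order on $\Sigma$ and the fact that $\mu_{\vec{p}}\circ\pi_{\vec{p}}^{-1}$ is Lebesgue measure, and then quote Lemma~\ref{lem:unqiueness fixedpoint}. The paper's route highlights the dynamical characterisation of $T_{\vec{p}}$; yours is arguably more elementary and makes transparent that $\Phi_{\vec{p}}$ is literally the distribution function of $\tilde{\mu}_{\vec{p}}$, which is conceptually clean. One small point: your final extension to $\pi(\mathcal{E})$ by ``right-continuity of $F_{\vec{p}}$'' is slightly underspecified, since a point of $\pi(\mathcal{E})$ may be the left endpoint of a gap of $J$ and hence not approximable from the right within $J$; the cleanest fix is to note that $F_{\vec{p}}$ is continuous (as $\tilde{\mu}_{\vec{p}}$ is non-atomic) and that $\Phi_{\vec{p}}$ is continuous on $J$ (since $\pi_{\vec{p}}$ factors through the quotient $\Sigma\to J$), so agreement on a dense subset suffices.
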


\begin{proof}
We proceed in two steps. First, we will show that $\Phi_{\vec{p}}(x)=T_{\vec{p}}(x)$
for $x\in\pi(\Sigma\setminus\mathcal{E})$. Let $\tilde{\Phi}_{\vec{p}}:\overline{\R}\rightarrow[0,1]$
denote a bounded Borel measurable extension of $\Phi_{\vec{p}}$ such
that  $\tilde{\Phi}_{\vec{p}}(y)=0$ for $y\in[-\infty,\min J]$,
and $\tilde{\Phi}_{\vec{p}}(y)=1$ for $y\in[\max J,+\infty]$. Let
$\omega\in\Sigma\setminus\mathcal{E}$.  For $i<\omega_{1}$ we have
$f_{i}(\pi(\omega))\ge\max J$, and for $i>\omega_{1}$ we have $f_{i}(\pi(\omega))\le\min J$.
Since $\Phi_{\vec{p}}(\max J)=1$ and $\Phi_{\vec{p}}(\min J)=0$,
the equality in \eqref{eq:conjugacy condition} yields for $x=\pi(\omega)$
\begin{align*}
M_{\vec{p}}(\tilde{\Phi}_{\vec{p}})(x) & =\sum_{i\in I}p_{i}\tilde{\Phi}_{\vec{p}}(f_{i}(x))=\sum_{i<\omega_{1}}p_{i}+p_{\omega_{1}}\cdot\Phi_{\vec{p}}(f(x))+\sum_{i>\omega_{1}}0=\sum_{i<\omega_{1}}p_{i}+p_{\omega_{1}}\cdot\vec{g}_{\vec{p}}(\Phi_{\vec{p}}(x)).
\end{align*}
Since $x=\pi(\omega)$ and $\omega\in\Sigma\setminus\mathcal{E}$,
we have $\vec{g}_{\vec{p}}(\Phi_{\vec{p}}(x))=g_{\omega_{1}}(\Phi_{\vec{p}}(x))$.
Hence, we have for every $x\in\pi(\Sigma\setminus\mathcal{E})$, 
\[
M_{\vec{p}}(\tilde{\Phi}_{\vec{p}})(x)=\sum_{i<\omega_{1}}p_{i}+p_{\omega_{1}}\cdot g_{\omega_{1}}(\Phi_{\vec{p}}(x))=\sum_{i<\omega_{1}}p_{i}+p_{\omega_{1}}\cdot\frac{1}{p_{\omega_{1}}}\left(\Phi_{\vec{p}}(x)-\sum_{i<\omega_{1}}p_{i}\right)=\tilde{\Phi}_{\vec{p}}(x).
\]
Further, for every $x\in[-\infty,\min J]\cup[\max J,+\infty]$ we
have $M_{\vec{p}}(\tilde{\Phi}_{\vec{p}})(x)=\tilde{\Phi}_{\vec{p}}(x)$.
Let $E:=\pi(\Sigma\setminus\mathcal{E})\cup[-\infty,\min J]\cup[\max J,+\infty]$.
Since $f_{i}(E)\subset E$ for every $i\in I$, we can show inductively
that for every $x\in E$ and $n\in\N$, 
\[
M_{\vec{p}}^{n}(\tilde{\Phi}_{\vec{p}})(x)=M_{\vec{p}}(M_{\vec{p}}^{n-1}\tilde{\Phi}_{\vec{p}})(x)=\sum_{i\in I}p_{i}(M_{\vec{p}}^{n-1}\tilde{\Phi}_{\vec{p}})(f_{i}(x))=\sum_{i\in I}p_{i}\tilde{\Phi}_{\vec{p}}(f_{i}(x))=\tilde{\Phi}_{\vec{p}}(x).
\]

By \eqref{eq:Mp convergence measurable} (Remark: \eqref{eq:Mp convergence 
measurable} is valid for any bounded measurable function $h$ on $\overline{R}$ 
such that $h=1$ around $+\infty$ and $h=0$ around $-\infty$) we conclude that 
for $x\in\pi(\Sigma\setminus\mathcal{E})$,
\[
\tilde{\Phi}_{\vec{p}}(x)=\lim_{n\rightarrow\infty}M_{\vec{p}}^{n}\tilde{\Phi}_{\vec{p}}(x)=T_{\vec{p}}(x)\tilde{\Phi}_{\vec{p}}(\infty)+(1-T_{\vec{p}}(x))\tilde{\Phi}_{\vec{p}}(-\infty)=T_{\vec{p}}(x).
\]
This completes the proof of $\Phi_{\vec{p}}(x)=T_{\vec{p}}(x)$ for
$x\in\pi(\Sigma\setminus\mathcal{E})$. Now, let $\omega\in\mathcal{E}$
and $x=\pi(\omega)$. Let $(\omega^{(n)})\subset\Sigma\setminus\mathcal{E}$
such that $(\omega_{1}^{(n)},\dots,\omega_{n}^{(n)})=(\omega_{1},\dots,\omega_{n})$
and $x_{n}:=\pi(\omega^{(n)})$, $n\ge1$. By the continuity of $\pi$
with respect to the word metric, we have $x_{n}\rightarrow x$ as
$n\rightarrow\infty$. By the definition of $\Phi_{\vec{p}}$ we have
$\Phi_{\vec{p}}(x_{n})=\pi_{\vec{p}}(\omega^{(n)})$ and $\Phi_{\vec{p}}(x)=\pi_{\vec{p}}(\omega)$.
So, by the continuity of $\pi_{\vec{p}}$ with respect to the word
metric, we have $\Phi_{\vec{p}}(x_{n})\rightarrow\Phi_{\vec{p}}(x)$
as $n\rightarrow\infty$. Since $\Phi_{\vec{p}}(x_{n})=T_{\vec{p}}(x_{n})$
by the first part of the proof, and since $T_{\vec{p}}$ is continuous
by Theorem \ref{thm:spectralgap}, we can conclude that $\Phi_{\vec{p}}(x)=T_{\vec{p}}(x)$.
The proof is complete. 
\end{proof}
\begin{thm}[{\cite[Theorem 1.2]{MR2576266}}]
\label{thm:conjugacy theorem}Suppose that $(f_{i})_{i\in I}$ are
$\mathcal{C}^{1+\epsilon}$-diffeomorphisms satisfying the open set
condition.  Suppose that $J$ is an interval. Then for every $\vec{p}\in(0,1)^{s}$
the following rigidity dichotomy holds. 
\end{thm}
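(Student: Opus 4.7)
The plan is to reduce the statement to the cited rigidity dichotomy of \cite[Theorem 1.2]{MR2576266} via Lemma \ref{lem:T is conjugacy}. Since $J$ is a (closed) interval, after an affine change of coordinates identifying $J$ with $[0,1]$, the map $f:J\to J$ is an expanding $\mathcal{C}^{1+\epsilon}$ map with finitely many full branches $\{f_i\}$, and analogously $g_{\vec{p}}:[0,1]\to[0,1]$ is a piecewise linear expanding map with full branches of slopes $(1/p_i)_{i\in I}$. By Lemma \ref{lem:T is conjugacy}, $T_{\vec{p}|J}=\Phi_{\vec{p}}$ is precisely the topological conjugacy $f\sim g_{\vec{p}}$ appearing in the framework of \cite{MR2576266}; thus their dichotomy applies verbatim.

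For the rigid alternative (case $\alpha_{-}(\vec{p})=\alpha_{+}(\vec{p})$), I would argue as follows. By the discussion in Section \ref{subsec:Dimension-spectrum-of}, this equality is equivalent to $\delta\varphi$ and $\psi$ being cohomologous, and combined with $\delta=\dim_{H}(J)=1$ it forces $\varphi$ and $\psi$ to be cohomologous via a H\"older continuous transfer function. Translated back to the dynamics, the logarithms of $f'$ and $g_{\vec{p}}'$ (pulled back to $\Sigma$) differ by a H\"older coboundary, which is exactly the Livsic--Shub--Sullivan hypothesis guaranteeing that the topological conjugacy $T_{\vec{p}|J}$ is a $\mathcal{C}^{1+\epsilon}$-diffeomorphism between $J$ and $[0,1]$. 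This is the step imported from \cite{MR2576266}; one can also reach it by bootstrapping from Theorem \ref{thm:alpha-minus hoelder}, which already yields $T_{\vec{p}}\in\mathcal{C}^{\alpha_-}=\mathcal{C}^{1}$ in this regime, and then upgrading via the cohomological equation.

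For the non-rigid alternative (case $\alpha_{-}(\vec{p})<\alpha_{+}(\vec{p})$), since $J$ is an interval we have $\dim_{H}(J)=1$ and the general inequalities $\alpha_{-}\le \delta\le \alpha_{+}$ become strict, giving $\alpha_{-}<1<\alpha_{+}$. Applying Theorem \ref{thm:multifrac} to the non-trivial element $C:=T_{\vec{p}}\in\T$, for every $\alpha\in(\alpha_{-},1)$ the level set $\{x\in J:\Hol(T_{\vec{p}},x)=\alpha\}$ has Hausdorff dimension $-t^{*}(-\alpha)>0$. At any point $x$ with $\Hol(T_{\vec{p}},x)<1$, the function $T_{\vec{p}}$ fails to be differentiable, so the set of non-differentiability points of $T_{\vec{p}}$ in $J$ contains a set of positive Hausdorff dimension, as claimed.

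The main obstacle is the rigid direction: promoting the information $\alpha_{-}=\alpha_{+}=1$ from a cohomological identity on the symbolic level to the $\mathcal{C}^{1+\epsilon}$-smoothness of the conjugacy $T_{\vec{p}}$ on $J$. Theorem \ref{thm:alpha-minus hoelder} and Corollary \ref{cor:optimal hoelder continuity} provide Lipschitz regularity but not differentiability of the inverse; the additional regularity requires the Livsic-type argument (H\"older solution of the cohomological equation, then transfer to the dynamics through the coding map), which is precisely the content imported from \cite[Theorem 1.2]{MR2576266}. Once this is in place, combining with Lemma \ref{lem:T is conjugacy} and Theorem \ref{thm:multifrac} yields the two alternatives in a mutually exclusive and exhaustive manner.
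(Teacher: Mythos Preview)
This theorem is quoted verbatim from \cite[Theorem~1.2]{MR2576266}; the present paper does not prove it and merely invokes it in Corollary~\ref{cor:conjugacy}. There is therefore no ``paper's own proof'' to compare against.

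Your proposal is, in effect, an attempt to recover the cited result from the machinery developed here. This is partially successful. For the non-rigid alternative you correctly observe that $\Phi_{\vec p}\in\mathcal{C}^{\alpha_-}$ follows from Theorem~\ref{thm:alpha-minus hoelder} (via Lemma~\ref{lem:T is conjugacy}), and that the non-differentiability set has positive Hausdorff dimension follows from Theorem~\ref{thm:multifrac}. You omit the clause $\Phi_{\vec p}'\equiv 0$ Leb-a.e., which is precisely Proposition~\ref{prop:non-differentiability}(2). One quibble: your justification ``the general inequalities $\alpha_-\le\delta\le\alpha_+$ become strict'' is not supported by the paper, which only records $\alpha_-\le\delta$ with equality iff $\alpha_-=\alpha_+$. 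The inequality $\alpha_+>1$ needs the argument in the proof of Proposition~\ref{prop:non-differentiability}(2): since $t(0)=1$, $t(1)=0$ and $t$ is strictly convex, one gets $\alpha_0=-t'(0)>1$, whence $\alpha_+\ge\alpha_0>1$.

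For the rigid alternative you are candid that the upgrade from $T_{\vec p}\in\mathcal{C}^1$ (which is all Theorem~\ref{thm:alpha-minus hoelder} gives) to a $\mathcal{C}^{1+\epsilon}$-diffeomorphism genuinely requires the Liv\v{s}ic--Shub--Sullivan style argument imported from \cite{MR2576266}; nothing in the present paper furnishes this step. So your sketch is a correct meta-analysis of what the paper's tools do and do not yield, rather than a comparison of proofs.
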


\begin{enumerate}
\item If $\alpha_{-}=\alpha_{+}(=1)$ then $\Phi_{\vec{p}}$ is a 
$\mathcal{C}^{1+\epsilon}$-diffeomorphism.
\item If $\alpha_{-}<\alpha_{+}$ then $\Phi_{\vec{p}}'\equiv0$ Leb-a.e.,
$\Phi_{\vec{p}}\in\mathcal{C}^{\alpha_{-}}(\overline{\R})$ and the set
of non-differentiable	 points of $\Phi_{\vec{p}}$ has positive Hausdorff
dimension. 
\end{enumerate}

Combining the previous theorem with the fact that $T_{\vec{p}}\in\mathcal{C}^{\alpha_{-}(\vec{p})}(J)$
(see Theorem \ref{thm:alpha-minus hoelder}) we obtain the following
corollary from Lemma \ref{lem:T is conjugacy}, Theorem \ref{thm:conjugacy theorem}
and Corollary \ref{cor:optimal hoelder continuity}. Recall that we
have $\alpha_{-}(\vec{p})\le\dim_{H}(J)$ with equality if and only
if $\alpha_{-}=\alpha_{+}$. 
\begin{cor}
\label{cor:conjugacy}Suppose that $(f_{i})_{i\in I}$ are  
$\mathcal{C}^{1+\epsilon}$-diffeomorphisms
satisfying the open set condition. Let $\delta:=\dim_{H}(J)$. For
$\vec{p}\in(0,1)^{s}$ we have $\alpha_{-}(\vec{p})=\alpha_{+}(\vec{p})$
if and only if $T_{\vec{p}}$ is $\mathcal{C}^{\delta}(\overline{\R})$.
If $\alpha_{-}(\vec{p})=1$ then $\alpha_{-}=\alpha_{+}=1$, $J=\overline{O}$
and $T_{\vec{p}}$ is a $\mathcal{C}^{1+\epsilon}$-diffeomorphism.
\end{cor}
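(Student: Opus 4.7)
The plan is to assemble the corollary from Lemma \ref{lem:T is conjugacy}, Theorem \ref{thm:conjugacy theorem}, Corollary \ref{cor:optimal hoelder continuity}, together with one additional geometric observation that forces $J=\overline{O}$ when $\dim_{H}(J)=1$. The first assertion, namely the equivalence between $\alpha_{-}(\vec{p})=\alpha_{+}(\vec{p})$ and $T_{\vec{p}}\in\mathcal{C}^{\delta}(\overline{\R})$, is essentially already contained in Corollary \ref{cor:optimal hoelder continuity}, so no further work is needed there.

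For the second assertion, suppose $\alpha_{-}(\vec{p})=1$. First I would combine the bound $\alpha_{-}\le\delta\le 1$ recalled immediately before the corollary, with the characterisation ``$\alpha_{-}=\delta$ if and only if $\alpha_{-}=\alpha_{+}$''. Since $\alpha_{-}=1$ forces $\delta=1$, this characterisation immediately yields $\alpha_{-}=\alpha_{+}=1$, and Corollary \ref{cor:optimal hoelder continuity} then gives $T_{\vec{p}}\in\mathcal{C}^{1}(\overline{\R})$, so in particular $T_{\vec{p}}$ is Lipschitz.

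Next I would verify that $\delta=1$ together with the open set condition forces $J=\overline{O}$. The key observation is that the primary gap $G:=\overline{O}\setminus\bigcup_{i\in I}f_{i}^{-1}(\overline{O})$, together with its preimages $\{f_{\omega}^{-1}(G)\mid\omega\in I^{*}\}$, are pairwise essentially disjoint and exhaust $\overline{O}\setminus J$. By Bowen's formula, $\delta=1$ is equivalent to $\mathcal{P}(\varphi)=0$, under which the partition functions $\sum_{|\omega|=n}\e^{S_{n}\varphi(\overline{\omega})}$ stay bounded away from zero, so the double sum $\sum_{\omega\in I^{*}}\e^{S_{|\omega|}\varphi(\overline{\omega})}$ diverges. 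Combining this with the bounded distortion estimate $|f_{\omega}^{-1}(G)|\asymp |G|\e^{S_{|\omega|}\varphi(\overline{\omega})}$ from Lemma \ref{lem:bounded distortion}, the assumption $G\neq\emptyset$ would force $|\overline{O}\setminus J|=\sum_{\omega}|f_{\omega}^{-1}(G)|=\infty$, contradicting $|\overline{O}\setminus J|\le|\overline{O}|<\infty$. Hence $G=\emptyset$ and iterating the resulting identity $\bigcup_{i}f_{i}^{-1}(\overline{O})=\overline{O}$ yields $J=\overline{O}$.

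Finally, with $J=\overline{O}$ an interval, Theorem \ref{thm:conjugacy theorem} applies, and since $\alpha_{-}=\alpha_{+}$ its case (1) gives that $\Phi_{\vec{p}}$ is a $\mathcal{C}^{1+\epsilon}$-diffeomorphism of $\overline{O}$ onto $[0,1]$; Lemma \ref{lem:T is conjugacy} then identifies $\Phi_{\vec{p}}$ with $T_{\vec{p}|J}$, giving the diffeomorphism conclusion. The main obstacle is the implication $\delta=1\Rightarrow J=\overline{O}$: in the piecewise-linear self-similar case it reduces to Hutchinson's similarity-dimension formula, but in the present $\mathcal{C}^{1+\epsilon}$ conformal setting one must instead appeal to Bowen's formula together with bounded distortion and a uniform lower bound on the partition functions, as sketched above.
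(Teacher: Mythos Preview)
Your proposal is correct and follows essentially the same route as the paper, which simply cites Lemma~\ref{lem:T is conjugacy}, Theorem~\ref{thm:conjugacy theorem}, and Corollary~\ref{cor:optimal hoelder continuity} (together with the inequality $\alpha_{-}\le\delta$ and its equality case) without further comment. The one place where you go beyond the paper is the implication $\delta=1\Rightarrow J=\overline{O}$: the paper treats this as implicit, whereas you supply an explicit argument via Bowen's formula, the Gibbs-type lower bound on the partition functions $\sum_{|\omega|=n}\e^{S_{n}\varphi(\overline{\omega})}\asymp 1$, and bounded distortion---this is a standard and correct way to establish the claim, and makes the corollary genuinely self-contained.
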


\section{Appendix: Contractions near infinity\label{sec:Contractions-near-infinity}}

The property that $(f_{i})_{i\in I}$ is contracting near infinity
depends on the choice of the metric $d$. We will show that, by modifying
the $(f_{i})_{i\in I}$ near infinity, we can always assume that an
 expanding family $(f_{i})_{i\in I}$ is contracting near infinity
with respect to a metric $d$ which is strongly equivalent to the
Euclidean metric on compact subsets of $\R$. 

We consider the metric $d$ on $\overline{\R}$ induced by the bijection
\[
h:\overline{\R}\rightarrow\left[-1,1\right],\quad h(x):=\frac{x}{1+\left|x\right|}.
\]
Namely, we set 
\[
d\left(x,y\right):=\left|h(x)-h(y)\right|.
\]
Note that the metric $d$ generates the topology of the two-point
compactification of $\R$. Moreover, $d$ is strongly equivalent to
the Euclidean metric on compact subsets of $\R$. 

Now suppose that $(f_{i})_{i\in I}$ is expanding with expansion rate
$\lambda>1$. We can take $g_{i}$, $i\in I$, such that $g_{i}=f_{i}$
in a neighbourhood of $\overline{\R}\setminus(V_{+}\cup V_{-})$ and
\[
g_{i}'(x)\rightarrow\lambda,\quad\text{as }x\rightarrow\pm\infty.
\]

Then we can prove the following lemma.
\begin{lem}
\label{lem:fi lipschitz}There exist neighbourhoods $V^{\pm}$ of
$\pm\infty$ such that $\Lip(g_{i|V^{\pm}})<1$ for each $i\in I$.
\end{lem}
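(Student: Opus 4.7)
The plan is to reduce the problem to estimating Euclidean Lipschitz constants after conjugating by $h$, and then use the asymptotic behaviour $g_i'(x)\to \lambda$ to control the derivative of the conjugated map near the endpoints.

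Concretely, I would begin by observing that the bijection $h$ is a diffeomorphism from $\R$ onto $(-1,1)$ extending to a homeomorphism $\overline{\R}\to [-1,1]$, and by the very definition of $d$ the restriction $h:(\overline{\R},d)\to ([-1,1],|\cdot|)$ is an isometry. Therefore, for any $D\subset \overline{\R}$ and any $g:D\to \R$ that extends continuously to $\overline{\R}$ (with $g(\pm\infty)=\pm\infty$), setting $\tilde g := h\circ g\circ h^{-1}$ gives
\[
\Lip(g_{|D}) \;=\; \sup_{s\neq t,\; s,t\in h(D)}\frac{|\tilde g(s)-\tilde g(t)|}{|s-t|}.
\]
So it suffices to find neighbourhoods $W^{\pm}$ of $\pm 1$ in $[-1,1]$ on which $|\tilde g_i'|<1$ uniformly for $i\in I$; then $V^{\pm}:=h^{-1}(W^{\pm})$ will do by the mean value theorem.

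Next I would compute $\tilde g_i'$ explicitly. Since $h'(x)=(1+|x|)^{-2}$ for $x\in\R$, the chain rule gives, for $x=h^{-1}(t)\in\R$,
\[
\tilde g_i'(t) \;=\; g_i'(x)\,\frac{h'(g_i(x))}{h'(x)}
\;=\; g_i'(x)\left(\frac{1+|x|}{1+|g_i(x)|}\right)^{\!2}.
\]
From $g_i'(x)\to\lambda$ as $x\to\pm\infty$ and the mean value theorem, $g_i(x)/x\to\lambda$ as $x\to\pm\infty$, so $(1+|x|)/(1+|g_i(x)|)\to 1/\lambda$. Hence
\[
\lim_{t\to\pm 1}\tilde g_i'(t)\;=\;\lambda\cdot\frac{1}{\lambda^{2}}\;=\;\frac{1}{\lambda}\;<\;1,\qquad i\in I.
\]
Because $I$ is finite and $\tilde g_i'$ is continuous on a neighbourhood of $\pm 1$ in $[-1,1]$ (with the displayed limit at the endpoint), one can then choose neighbourhoods $W^{\pm}$ of $\pm 1$ and a constant $c<1$ such that $|\tilde g_i'(t)|\le c$ for all $t\in W^{\pm}$ and all $i\in I$. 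Applying the mean value theorem on each (connected) piece of $W^{\pm}$ and pulling back by $h$ yields $\Lip(g_{i|V^{\pm}})\le c<1$, as desired.

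The only genuine obstacle is the asymptotic identification $g_i(x)/x\to\lambda$; this is straightforward from $g_i'(x)\to\lambda$ and integration, but it is the step that tethers the pointwise derivative limit to the size comparison $(1+|x|)/(1+|g_i(x)|)\to 1/\lambda$ appearing in $\tilde g_i'$. Everything else is routine: a chain-rule computation, continuity of $\tilde g_i'$ at the endpoints of $[-1,1]$, and the finiteness of $I$ to obtain uniform neighbourhoods and a uniform contraction constant.
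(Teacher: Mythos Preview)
Your proposal is correct and follows essentially the same route as the paper: both compute the quantity $g_i'(x)\,(1+|x|)^2/(1+|g_i(x)|)^2$ and use $g_i'(x)\to\lambda$ together with $g_i(x)/x\to\lambda$ to bound it below $1$ near $\pm\infty$. Your packaging via the conjugation $\tilde g_i=h\circ g_i\circ h^{-1}$ and the mean value theorem is slightly cleaner in that it absorbs the endpoint case $x=\pm\infty$ automatically, whereas the paper treats the pair $(\infty,y)$ by a separate direct estimate.
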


\begin{proof}
Let $i\in I$. For $x,y\in\R$ if $x,y$ are close to $\infty$ or
$x,y$ are close to $-\infty$, then we have 
\begin{align}
\frac{d(g_{i}(x),g_{i}(y))}{d(x,y)} & =\frac{\left|\left(1+\left|x\right|\right)\left(1+\left|y\right|\right)\right|}{\left(1+\left|g_{i}(x)\right|\right)\left|\left(1+\left|g_{i}(y)\right|\right)\right|}\left|\frac{g_{i}(x)-g_{i}(y)}{x-y}\right|.\label{eq:lipschitz calculation}
\end{align}
By our assumptions we have $\limsup_{u\rightarrow\infty}u/g_{i}(u)\le\lambda^{-1}$
and $\lim_{u\rightarrow\infty}g_{i}'(u)\rightarrow\lambda$. Hence,
it follows from (\ref{eq:lipschitz calculation}) that for $x,y$
sufficiently large, 
\[
\frac{d(g_{i}(x),g_{i}(y))}{d(x,y)}<1.
\]
It remains to consider the case when $x=\infty$. The case when $x=-\infty$
is similar and therefore omitted. For $y$ sufficiently large such
that $g_{i}(y)\ge(\lambda-\eta)y$, for some $\eta$ with $\lambda-\eta>1$,
we have  
\[
\frac{d(g_{i}(\infty),g_{i}(y))}{d(\infty,y)}=\frac{1+y}{1+g_{i}(y)}\le\frac{1+y}{1+(\lambda-\eta)y}<1.
\]
Hence, there exists a neighbourhood $V^{+}$ of $+\infty$ such that
$\Lip(g_{i|V^{+}})<1$. 
\end{proof}
\def\cprime{$'$}
\providecommand{\bysame}{\leavevmode\hbox to3em{\hrulefill}\thinspace}
\providecommand{\MR}{\relax\ifhmode\unskip\space\fi MR }
\providecommand{\MRhref}[2]{%
  \href{http://www.ams.org/mathscinet-getitem?mr=#1}{#2}
}
\providecommand{\href}[2]{#2}

\end{document}